\numberwithin{equation}{section}
\definecolor{my-black}{rgb}{0,0,0}
\definecolor{my-blue}{rgb}{0,0,0.8}
\definecolor{my-red}{rgb}{0.8,0,0} 
\definecolor{my-green}{rgb}{0,0.5,0}
\theoremstyle{plain} 
\newtheorem{theorem}{Theorem}
\newtheorem{lemma}[theorem]{Lemma}
\newtheorem{proposition}[theorem]{Proposition}
\newtheorem*{theorem*}{Theorem} 
\theoremstyle{definition} 
\newtheorem{remark}[theorem]{Remark}
\newtheorem{claim}[theorem]{Claim}
\newtheorem{question}[theorem]{Question}
\theoremstyle{remark}
\newtheorem*{remark-non}{Remark}
\DeclareMathOperator{\supp}{supp}
\newcommand{\avgI}{\sout{1}}
\newcommand{\R}{\mathbb{R}}
\newcommand{\C}{\mathbb{C}}
\newcommand{\N}{\mathbb{N}}
\newcommand{\Z}{\mathbb{Z}}
\def\Xint#1{\mathchoice
{\XXint\displaystyle\textstyle{#1}}%
{\XXint\textstyle\scriptstyle{#1}}%
{\XXint\scriptstyle\scriptscriptstyle{#1}}%
{\XXint\scriptscriptstyle\scriptscriptstyle{#1}}%
\!\int}
\def\XXint#1#2#3{{\setbox0=\hbox{$#1{#2#3}{\int}$ }
\vcenter{\hbox{$#2#3$ }}\kern-.6\wd0}}
\def\dashint{\Xint-}
\title{On Pauli pairs and Fourier uniqueness problems}
\author[Jo\~ao P. G. Ramos]{Jo\~ao P. G. Ramos}
\address{Institute of Mathematics, EPFL, Lausanne, Switzerland}
\email{joaopgramos95@gmail.com}
\author[Mateus Sousa]{Mateus Sousa}
\address{BCAM - Basque Center for Applied Mathematics, Alameda de Mazarredo 14, 48009 Bilbao, Bizkaia,
Spain}
\email{mcosta@bcamath.org} 
\begin{document}

\begin{abstract}
We investigate the concept of Pauli pairs and a \emph{discrete} counterpart to it. In particular, we make substantial progress on the question of when a discrete Pauli pair is \emph{automatically} a classical Pauli pair. 

Effectively, if one of the functions has space and frequency Gaussian decay, and one has that $|f| = |g|$ and $|\widehat{f}| = |\widehat{g}|$ on two sets which accumulate like suitable small multiples of $\sqrt{n}$ at infinity, then $|f| \equiv |g|$ and $|\widehat{f}| = |\widehat{g}|.$ Furthermore, we show that if one drops either the assumption that one of the functions has space-frequency decay or that the discrete sets accumulate at a high rate, then the desired property no longer holds.  

Our techniques are inspired by and directly connected to several recent results in the realm of Fourier uniqueness problems \cite{kns,Radchenko-Ramos,Ramos-Sousa}, and our results may be seen as a nonlinear generalization of those. As a consequence of said techniques, we are able to prove a sharp discrete version of Hardy's uncertainty principle. 
\end{abstract}
\maketitle

\section{Introduction}

\subsection{Historical background} For $f \in L^1 \cap L^2(\R^d),$ we define its \emph{Fourier transform} as
\[
\widehat{f}(\xi) = \int_{\R^d} e^{-2\pi i \xi \cdot x} \, f(x) \, dx. 
\]
We will be interested, in this manuscript, in understanding the so-called \emph{Pauli pairs.} Indeed, two functions $f,g:\R^d \to \C$ are said to form a \emph{Pauli pair} if they satisfy that $|f| = |g|$ and $|\widehat{f}| = |\widehat{g}|$ both hold pointwise almost everywhere. In that case, $f$ and $g$ are \emph{Pauli partners}.

The main question regarding those pairs, raised by W. Pauli himself inspired by quantum mechanical considerations, is when one can guarantee that, if two functions $f$ and $g$ are Pauli partners, then $g = e^{i\alpha}f,$ for some $\alpha \in \R.$ Such a question is an instance of a \emph{phase retrieval} problem, a kind of problem which has gained massive attention in recent years due to its connections to physics, chemistry, and signal processing -- see, for instance, \cite{Rosenblatt,allman,Drenth,seelamantula,corbett,hurt,ismagilov,klibanov,millane,reichenbach} for a few of those instances, and see \cite{eldar-mendelson, candes-eldar, candes-xiaodong, alaifari-daubechies,balan,bandeira-pr,grohs-liehr,grohs-rathmair,grohs-rathmair-2,jaming-liehr,jaming-phase,hogan-lakey,Mitchell-and-friends, Christ-Mitch-Ben} and the references therein for more mathematical contributions on the subject. 

In our present context, however, this problem is \emph{ill posed}, since it is not hard to construct explicitly infinitely many functions which do not do phase retrieval. Furthermore, even if we consider solutions of the form $g = e^{i \alpha} \overline{f}$ as trivial, then the following construction still yields many non-trivial counterexamples of functions which are Pauli non-unique. 

Instead of further exploring the already rich context of Pauli pairs, the current manuscript deals with another question: given two functions $f,g:\R^d \to \C$ and two discrete sets $\Lambda, \Gamma \subset \R^d,$ if we know that 
\[
|f(\lambda)| = |g(\lambda)|, \, \forall \, \lambda \in \Lambda, \,\,\, |\widehat{f}(\gamma)| = |\widehat{g}(\gamma)|, \forall \, \gamma \in \Gamma, 
\]
then does it follow that $f$ and $g$ are Pauli partners? 

Such a problem is motivated, first and foremost, by the original problem of W. Pauli: if instead of data on the whole real line, we only have information of the probability density of the distribution of a particle on a \emph{countable} set of points, for two different times, can we guarantee that we can reconstruct the probability densities themselves in the whole space at both times? This may be understood as a version of a \emph{Fourier uniqueness} problem: if we have data on $f$ and $\widehat{f}$ on two discrete sets $\Lambda,\Gamma \subset \R^d,$ under which conditions can we ensure full reconstruction of $f$? 

The latter problem has gained a lot of attention in the past decade, since the breakthrough solution of the sphere packing problem in dimensions 8 and 24 by M. Viazovska \cite{Viazovska-8,Viazovska-24}. One of the many consequences of the breakthrough techniques used in the works of M. Viazovsa was the following \emph{Fourier interpolation formula} by D. Radchenko and M. Viazovska \cite{Radchenko-Viazovska}: there are even, real, Schwartz functions $\{a_n\}_n$ such that, if $f \in \mathcal{S}(\R)$ is an even function, then 
\[
f(x) = \sum_{n \ge 0} \left\{ f(\sqrt{n}) a_n(x) + \widehat{f}(\sqrt{n}) \widehat{a_n}(x)\right\},
\]
where the sum converges absolutely. In a related way, we highlight the results by H. Hedenmalm and A. Montes-Rodriguez  \cite{Hedenmalm-Montes-1,Hedenmalm-Montes-2,Hedenmalm-Montes-3} and the subsequent work \cite{Bakan}, which also provide Fourier uniqueness results of a similar flavour. 

In spite of the seemingly magical nature of the nodes $(\sqrt{n},\sqrt{n})_{n \ge 0},$ reconstruction results can be achieved further for way less regular sequences. The first result of such a kind seems to have been by the two authors of the present manuscript \cite{Ramos-Sousa}, where we have obtained that, if $f \in \mathcal{S}(\R)$ is such that $f(\pm n^{\alpha}) = \widehat{f}(\pm n^{\beta}) = 0,$ for some non-trivial region of indices $(\alpha,\beta),$ then $f \equiv 0.$ Developing upon these ideas, Kulikov, Nazarov and Sodin \cite{kns} extended significantly those results to the case of critical sequences. Moreover, they also constructed counterexamples in the supercritical case by employing a novel iteration method. Finally, we also highlight the other paper by the two authors of this manuscript \cite{ramos-sousa-2}, where we extend the interpolation results by D. Radchenko and M. Viazovska to the case of other critical sequences without rigid algebraic properties as in their original paper. See also \cite{ramos-stoller,stoller,radchenko-stoller,brs,ckmrv-universal,cohn-sphere,goncalves-ramos,Cohn-Goncalves,feigenbaum,Kulikov-alone,Radchenko-Ramos} for further related work. 

Beyond the importance of such results for the development of new techniques in different areas, such as that used in order to solve the sphere packing problem, we highlight that Fourier uniqueness problems are intimately connected with the deep topic of \emph{Uncertainty Principles}. Indeed, another way to formulate the results above on discrete Fourier uniqueness is the following: 
\begin{center} 
\emph{``if a function $f$ vanishes on a set $\Lambda$ and its Fourier transform vanishes on a set $\Gamma$, does it imply that $f \equiv 0$?''}
\end{center} 
In an entirely analogous way, the classical Heisenberg uncertainty principle yields that, if a function $f$ ``almost vanishes'' outside a small ball, and the same holds for its Fourier transform, then it follows that $f \equiv 0$. We refer the reader to the papers \cite{folland, Amrein-Berthier, BCK} and the references therein for further results of similar flavor. 

In that same direction, we also highlight \emph{Hardy's uncertainty principle} \cite{Hardy}, which states that the ``almost-vanishing'' above may be given in terms of Gaussian decay: if $f$ is an integrable function such that $|f(x)| \le C e^{-a \cdot \pi |x|^2}$ and $|\widehat{f}(\xi)| \le C e^{-b \cdot \pi |\xi|^2}$ hold pointwise, with $a \cdot b > 1,$ then $f \equiv 0$. This fundamental result, besides considerably extending the breadth of uncertainty inequalities, also found applications in \emph{unique continuation problems}. We refer the reader to the celebrated work of L. Escauriaza, C. Kenig, G. Ponce and L. Vega \cite{EKPV1,EKPV2} for an extension of such a result to the case of time-dependent Schr\"odinger equations. We also highlight the recent papers \cite{goncalves-ramos,kehle-ramos} as other examples of such a translation of theorems and ideas from the Fourier realm as unique continuation results, especially in the recent paradigm of discrete Fourier uniqueness. 

\subsection{Main results} The main results of this paper may be interpreted as a non-linear counterpart to the discrete Fourier uniqueness problems mentioned above. Indeed, we will be first concerned with the one-dimensional setting, where we wish to show that information on the absolute values $|f|$ and $|\widehat{f}|$ on a discrete set implies full knowledge of the respective absolute values. By the non-linear nature of this problem, it differs significantly from the linear one and, in general, we will see that such a recovery may \emph{never} be achieved if we do not suppose additional constraint on at least one of the functions involved. This is the content of our first result, which may be interpreted as a \emph{negative} answer to the general case of the main question we aim to address at this paper. 

\begin{theorem}\label{thm:negative} Let $\Lambda, \Gamma \subset \R$ be two discrete sets. Then there are two functions $f,g:\R \to \C$ with $|f| \neq |g|$ and $|\widehat{f}| \neq |\widehat{g}|,$ such that $|f(\lambda)| = |g(\lambda)|, \, \forall \, \lambda \in \Lambda,$ and $|\widehat{f}(\gamma)| = |\widehat{g}(\gamma)|, \, \forall \, \gamma \in \Gamma.$ 
\end{theorem}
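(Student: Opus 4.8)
The plan is to exhibit an explicit pair $f,g$ for which $|f|=|g|$ and $|\widehat f|=|\widehat g|$ hold on all of $\Lambda$ and $\Gamma$ respectively, while $|f|\not\equiv|g|$ and $|\widehat f|\not\equiv|\widehat g|$. Since $\Lambda$ and $\Gamma$ are merely discrete (hence at most countable, with no accumulation point in $\R$), I expect the construction to proceed by producing, for each of the two sets, a function that is "invisible" to that set in a strong sense. Concretely, the idea is to find a real-valued Schwartz function $\varphi$ with $\varphi$ vanishing on $\Lambda$ and $\widehat\varphi$ vanishing on $\Gamma$, and then perturb a fixed reference function by $\varphi$ in a multiplicative/phase-aware way so that the absolute values get modified off $\Lambda,\Gamma$ but not on them.

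First I would set $g = f + \varphi$ for a suitable $\varphi$, but one has to be careful: $|f(\lambda)| = |g(\lambda)|$ does \emph{not} follow from $\varphi(\lambda)=0$ unless we are also adding $\varphi$ in a way compatible with the phase of $f$. So instead I would take $f$ and $g$ to agree on $\Lambda$ \emph{as functions} (not just in modulus) by choosing $g - f$ to vanish on $\Lambda$, and simultaneously $\widehat g - \widehat f$ to vanish on $\Gamma$. That is, I want a nonzero Schwartz function $h$ with $h|_\Lambda = 0$ and $\widehat h|_\Gamma = 0$, and then set $f$ to be any fixed Schwartz function (say a Gaussian) and $g = f + h$. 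Then automatically $f(\lambda) = g(\lambda)$ for all $\lambda\in\Lambda$ and $\widehat f(\gamma)=\widehat g(\gamma)$ for all $\gamma\in\Gamma$, so in particular the modulus conditions hold. It remains only to arrange that $|f|\not\equiv|g|$ and $|\widehat f|\not\equiv|\widehat g|$, which for generic $h\neq 0$ is clear, and can be forced by a further small generic perturbation of $f$ if needed.

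The key step, then, is the existence of a nonzero $h\in\mathcal S(\R)$ (or even just $h\in L^1\cap L^2$) vanishing on a prescribed discrete set $\Lambda$ with $\widehat h$ vanishing on a prescribed discrete set $\Gamma$. For this I would invoke the flexibility of Fourier uniqueness in the \emph{supercritical/free} regime: enumerate $\Lambda = \{\lambda_j\}$ and $\Gamma = \{\gamma_k\}$, and build $h$ as an infinite linear combination (or infinite product in a transformed picture) designed to kill one more point of $\Lambda$ and one more point of $\Gamma$ at each stage, while controlling Schwartz seminorms so the series converges. A clean way to package this is via the iteration method of Kulikov--Nazarov--Sodin \cite{kns} used to produce supercritical counterexamples: since $\Lambda,\Gamma$ are arbitrary discrete sets with no density constraint, that machinery (or an elementary Baire-category / Mittag-Leffler style argument in the space of Schwartz functions) yields a nonzero $h$ with the required vanishing. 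Alternatively, if one only wants $h\in L^2$, a direct Hilbert-space argument suffices: the conditions $h(\lambda_j)=0$, $\widehat h(\gamma_k)=0$ are countably many linear constraints, and one checks they do not force $h=0$ by exhibiting, e.g., a compactly supported smooth $h$ whose Fourier transform can be made to vanish on the countable set $\Gamma$ after adjusting finitely many parameters at a time.

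The main obstacle is ensuring that the vanishing function $h$ is genuinely nonzero \emph{and} that both $|f|\not\equiv|g|$ and $|\widehat f|\not\equiv|\widehat g|$ simultaneously — a priori $h\neq 0$ gives $|f|\not\equiv|g|$, but one must also check $\widehat h\neq 0$ contributes to making the Fourier moduli differ, which is automatic since $\widehat h = \widehat g - \widehat f$ is nonzero whenever $h$ is. The more delicate point is the convergence/regularity bookkeeping in the inductive construction of $h$: one must add, at each step, a Schwartz function supported (in space and frequency) away from the finitely many already-handled nodes, with rapidly decreasing seminorms, so that the limit is Schwartz and still nonzero. This is exactly the type of estimate handled in \cite{kns} and \cite{Ramos-Sousa}, so I would cite those techniques rather than redo the estimates. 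Finally, I note no Gaussian-decay hypothesis is assumed here, which is consistent with the later positive results requiring precisely such an extra assumption.
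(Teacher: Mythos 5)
Your reduction has a genuine gap at its key step. You set $g = f + h$ and require a \emph{nonzero} $h$ with $h|_\Lambda = 0$ and $\widehat{h}|_\Gamma = 0$ for \emph{arbitrary} discrete $\Lambda, \Gamma$. Such an $h$ does not exist in general: this is precisely the Fourier uniqueness problem, and for many discrete pairs --- e.g.\ $\Lambda = \Gamma = \{\pm\sqrt{n}\}_{n\ge 0}$ (Radchenko--Viazovska, for even Schwartz functions), or any pair satisfying the subcritical density condition of Kulikov--Nazarov--Sodin, which is essentially condition \eqref{eq:sets} of this paper --- the only function in the relevant class vanishing on $\Lambda$ with Fourier transform vanishing on $\Gamma$ is $h \equiv 0$. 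The iteration method of \cite{kns} that you invoke produces such an $h$ only in the \emph{supercritical} regime \eqref{eq:weak-density}; it cannot work for dense sets, and no Baire-category or Mittag-Leffler argument can circumvent an actual uniqueness theorem (the countably many linear constraints can have trivial joint kernel). Since Theorem \ref{thm:negative} is asserted for \emph{all} discrete sets, your construction fails exactly on the uniqueness pairs.

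The fix --- and the point you explicitly talked yourself out of --- is to exploit the nonlinearity of the modulus: $|a|=|b|$ does not require $a=b$; $a=-b$ also works. The paper's proof never imposes a joint space--frequency vanishing condition. It takes $g$ smooth and compactly supported in an interval disjoint from $\Lambda$ (so $g|_\Lambda=0$ trivially, using only discreteness of $\Lambda$), and then chooses $h$ so that $\widehat{h} = -\tfrac12\,\widehat{g}$ on $\Gamma$; then $\widehat{h+g} = +\tfrac12\,\widehat{g} = -\widehat{h}$ on $\Gamma$, so $|\widehat{h+g}| = |\widehat{h}|$ there by a sign flip rather than by vanishing. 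Prescribing $\widehat{h}$ on $\Gamma$ alone is a one-sided interpolation problem, solvable for any discrete $\Gamma$ (e.g.\ $h = -\tfrac12 g + \sum_i \psi_i$ with $\supp(\widehat{\psi_i}) \cap \Gamma = \emptyset$). The resulting pair is $(h, h+g)$. Your approach, by contrast, couples the two sides into the vanishing problem $h|_\Lambda = \widehat{h}|_\Gamma = 0$ and thereby collides with the very rigidity phenomenon the rest of the paper is built on.
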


As we shall see, the construction used in order to prove Theorem \ref{thm:negative} is simple, and uses heavily the fact that we do not impose any structure on the functions in its statement. The main theorem in this paper is, on the other hand, an answer to what happens if one imposes that at least one of the functions $f,g$ has robust decay - so much that some additional \emph{global} structure can be inferred from \emph{discrete} decay on both space and frequency. 

Before stating that result, we note that we shall write $\widehat{f} =\mathcal{F}(f)$ to denote the Fourier transform of the functions $f$ throughout the manuscript. Moreover, we let 
\[
H^s = \left\{ f:\R \to \C \colon \int_{\R} |\widehat{f}(\xi)|^2 (1+|\xi|^2)^{s/2} \, d \xi < +\infty \right\}
\]
denote the $L^2-$based Sobolev space of order $s$ on the real line. In that regard, we will use the notation 
$$\mathcal{F}(H^1) = \{ f: \R \to \C \colon \widecheck{f} \in H^1\}.$$

\begin{theorem}\label{thm:positive} Let $\Lambda, \Gamma$ denote two discrete subsets of $\R.$ The following assertions hold: 
\begin{enumerate}[\normalfont(I)]
\item Fix any $\alpha > 0.$ There is $C_{\alpha} > 0$ such that the following holds. Suppose $\Lambda, \Gamma$ satisfy that
\begin{equation}\label{eq:sets}
\max\left\{\limsup_{i\to \pm \infty} |\lambda_{i+1} - \lambda_i| |\lambda_i|, \limsup_{i\to \pm \infty} |\gamma_{i+1} - \gamma_i| |\gamma_i|\right\} < \frac{1}{C_{\alpha}}.
\end{equation}
 Let $f,g:\R \to \C$ be two functions in $H^1 \cap \mathcal{F}(H^1).$ Suppose, moreover, that the function $g$ satisfies 
\[
|g(x)| + |\widehat{g}(x)| \le Ce^{-\alpha x^2},
\]
and that we have $|f(\lambda)| = |g(\lambda)|, \, \forall \, \lambda \in \Lambda, \, \, |\widehat{f}(\gamma)| = |\widehat{g}(\gamma)|, \, \forall \, \gamma \in \Gamma.$ Then it follows that $|f| \equiv |g|$ and $|\widehat{f}| \equiv |\widehat{g}|.$

\item Conversely, suppose that the sets $\Lambda, \Gamma$ satisfy the following \emph{supercritical density condition}: 
\begin{equation}\label{eq:weak-density}
\min\left\{\liminf_{i\to \pm \infty} |\lambda_{i+1} - \lambda_i| |\lambda_i\, \liminf_{i\to \pm \infty} |\gamma_{i+1} - \gamma_i| |\gamma_i|\right\} > \frac{1}{2}. 
\end{equation} 
Then there is $a_0 >0$ with the followifng property: for each $\alpha \in (0,a_0),$ there is an infinite-dimensional space of functions $E_{\alpha} \subset \mathcal{S}$ such that, if $f,g \in E,$ then 
\[
|f(x)| + |g(x)| + |\widehat{f}(x)| + |\widehat{g}(x)| \le C e^{-\alpha |x|^2},
\]
and $|f(\lambda)| = |g(\lambda)|, \, \forall \lambda \in \Lambda,$ as well as $|\widehat{f}(\gamma)| = |\widehat{g}(\gamma)|,\, \forall \gamma \in \Gamma,$ but it does \emph{not} hold that either $|f|=|g|$ or $|\widehat{f}| = |\widehat{g}|$. 
\end{enumerate}
\end{theorem}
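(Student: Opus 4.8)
The two parts are approached differently: part~(I) reduces, after an analytic bootstrap, to a statement about entire functions of order two, while part~(II) reduces to a linear Fourier uniqueness counterexample in the spirit of \cite{kns}.

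\textbf{Part (I).} The hypotheses say exactly that $f$ decays at a Gaussian rate along $\Lambda$ and $\widehat f$ along $\Gamma$: indeed $|f(\lambda)| = |g(\lambda)| \le Ce^{-\alpha\lambda^2}$ and $|\widehat f(\gamma)| = |\widehat g(\gamma)| \le Ce^{-\alpha\gamma^2}$. The core analytic step -- the same one that underlies our sharp discrete Hardy uncertainty principle -- is to upgrade this to a global bound $|f(x)| + |\widehat f(x)| \lesssim e^{-\alpha' x^2}$ valid for all $x \in \R$, with $\alpha' = \alpha'(\alpha) > 0$; this is where the constant $C_\alpha$ is consumed, since \eqref{eq:sets} must force $\Lambda$ and $\Gamma$ to be sufficiently dense. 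In particular $f$, like $g$, then extends to an entire function of order $2$ and finite exponential type. The second step is the reflection device: $F(z) := f(z)\,\overline{f(\overline{z})}$ and $G(z) := g(z)\,\overline{g(\overline{z})}$ are entire of order $2$ with $F|_\R = |f|^2$ and $G|_\R = |g|^2$, so $\Psi := F - G$ is entire of order $2$ and vanishes on all of $\Lambda$. The third step is a zero-counting contradiction: condition \eqref{eq:sets} forces $n_\Lambda(R) \gtrsim C_\alpha R^2$, whereas a nonzero entire function of order $2$ and type $O(\alpha')$ has only $O(\alpha' R^2)$ zeros in $\{|z| \le R\}$ by a Jensen-type estimate; taking $C_\alpha$ large enough these are incompatible, forcing $\Psi \equiv 0$, i.e.\ $|f| \equiv |g|$. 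Running the same argument with $\widehat f, \widehat g, \Gamma$ in place of $f, g, \Lambda$ yields $|\widehat f| \equiv |\widehat g|$.

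The main obstacle in part~(I) is the first step -- turning Gaussian decay along two discrete sets into genuine global Gaussian decay. One expects a Phragm\'en--Lindel\"of type argument exploiting the data carried jointly by $\Lambda$ and $\Gamma$; the point requiring care is that the recovered rate $\alpha'$ and the admissible density of the sets -- equivalently, the precise size of $C_\alpha$ -- must be compatible with the final counting step.

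\textbf{Part (II).} This reduces to a purely linear construction: for each sufficiently small $\alpha > 0$, produce an infinite-dimensional space of nonzero Schwartz functions $h$ with $|h(x)| + |\widehat h(x)| \le Ce^{-\alpha|x|^2}$ such that $h$ vanishes on $\Lambda$ and $\widehat h$ vanishes on $\Gamma$. Taking $E_\alpha$ to be such a space (a linear one, so $0 \in E_\alpha$), any $f, g \in E_\alpha$ automatically satisfy $|f| = |g| = 0$ on $\Lambda$ and $|\widehat f| = |\widehat g| = 0$ on $\Gamma$, while $h$ and $2h$ with $h \neq 0$ already show that neither $|f| \equiv |g|$ nor $|\widehat f| \equiv |\widehat g|$ need hold.

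Under the supercritical condition \eqref{eq:weak-density}, such $h$ are furnished by the iteration method of Kulikov--Nazarov--Sodin \cite{kns}; initialising that scheme at a Gaussian preserves the pointwise Gaussian control, and this works as long as $\alpha$ stays below a threshold $a_0$ dictated by Hardy's uncertainty principle -- for instance $\alpha \ge \pi$ is impossible. To get an infinite-dimensional family rather than a single example, observe that adjoining finitely many points to $\Lambda$ and/or $\Gamma$ leaves the asymptotic condition \eqref{eq:weak-density} unchanged, so the construction still applies to the enlarged data; were the space of admissible $h$ finite-dimensional, imposing finitely many further vanishing conditions would annihilate it, contradicting the existence of a nonzero Gaussian-decaying solution for the enlarged sets. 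The main obstacle here is the bookkeeping inside the iteration -- one must verify that the Kulikov--Nazarov--Sodin construction can be run so as to deliver the stated Gaussian bounds uniformly, and hence yield a genuine infinite-dimensional space.
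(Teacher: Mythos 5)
Your overall architecture coincides with the paper's: bootstrap the discrete Gaussian decay of $f$ and $\widehat f$ to global Gaussian decay, apply Paley--Wiener to get entire extensions of order $2$ and finite type, pass to $F(z)=f(z)\overline{f(\overline z)}$ and $G(z)=g(z)\overline{g(\overline z)}$, and kill $F-G$ by comparing the zero count forced by $\Lambda$ (which condition \eqref{eq:sets} makes $\gtrsim C_\alpha r^2$) with the Jensen/Boas bound for order-$2$ functions of type $\theta(\alpha)=\pi^2/\alpha$; Part (II) is reduced, exactly as in the paper, to the Kulikov--Nazarov--Sodin linear counterexample. However, there is a genuine gap in Part (I): the first step --- upgrading $|f(\lambda)|\le Ce^{-\alpha\lambda^2}$ on $\Lambda$ and $|\widehat f(\gamma)|\le Ce^{-\alpha\gamma^2}$ on $\Gamma$ to $|f(x)|+|\widehat f(x)|\lesssim e^{-\alpha' x^2}$ everywhere --- is not an incidental lemma but the technical core of the theorem (Proposition \ref{prop:gaussian}), and you leave it entirely unproved. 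Moreover, the mechanism you gesture at, a Phragm\'en--Lindel\"of argument, cannot get off the ground here: at this stage $f$ is only assumed to lie in $H^1\cap\mathcal F(H^1)$, so there is no analytic object to which a maximum principle applies; analyticity is precisely what one is trying to \emph{earn} via the decay. The actual proof is a real-variable scheme in the spirit of \cite{kns}: a Poincar\'e--Wirtinger inequality on the gaps of $\Lambda$ yields, for truncations $f\cdot F_\xi$, moment inequalities that bound $\int|x|^{2p}|f|^2$ by $\tfrac12\int|y|^{2p}|\widehat f|^2$ plus $C^pp^p\|f\|_2^2$ plus discrete sums $\sum_\lambda|\lambda|^{2p-1}|f(\lambda)|^2$; one iterates between $f$ and $\widehat f$, controls the discrete sums by Riemann-sum comparison with $\int|x|^{2p}e^{-2\alpha x^2}dx\lesssim\rho^p\Gamma(p+1)$, and optimises in $p$ via Stirling to produce $\int e^{\beta x^2}|f|^2<\infty$. (One also needs, as the paper does, a preliminary regularity step showing $f\in\mathcal S$ from the discrete data.) Without some version of this argument, your Part (I) is a reduction, not a proof.

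Part (II) is essentially correct and matches the paper, which simply invokes Theorem 1(ii) of \cite{kns}; your remark that one must track the Gaussian bounds through the KNS iteration (whence the threshold $a_0$) is the right caveat, and your finite-codimension trick for infinite-dimensionality is a reasonable alternative to quoting that the KNS space is already infinite-dimensional. One small point of care: the pair exhibiting failure of $|f|\equiv|g|$ is most cleanly taken to be a nonzero $h\in E_\alpha$ together with $g\equiv 0$ (or your $h$ and $2h$), since the theorem's phrasing quantifies over all $f,g\in E$ and must be read as asserting the existence of such a pair.
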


The proof of Theorem \ref{thm:positive} is divided into some steps. First, we show that, if $f$ has Gaussian decay at the nodes of a set $\Lambda$ and $\widehat{f}$ has Gaussian decay at the nodes of a set $\Gamma,$ where $\Lambda, \Gamma$ are as in Theorem \ref{thm:positive}, then $f$ and $\widehat{f}$ possess Gaussian decay \emph{everywhere}. The proof of this fact is an adaptation of the techniques recently developed in \cite{kns}, which in turn are based on an optimization argument run together with a suitable Poincar\'e--Wirtinger inequality. We also mention the recent paper \cite{Radchenko-Ramos} as another instance where such techniques have been successfully used. 

After one obtains the desired Gaussian decay of $f$ and $\widehat{f},$ we use the classical Paley-Wiener theorem, which states in one of its most general versions that if a function decays super exponentially, then its Fourier transform may be extended as an entire function of a given order. This then implies that $f,g,\widehat{f},\widehat{g}$ may all be extended to be entire functions of order 2, with an explicit control on the type of each of them. 

The last part of the argument is simply an utilization of the function $H = f \cdot f^* - g \cdot g^*,$ where $f^*,g^*$ denote the holomorphic conjugates of $f,g,$ respectively. Our conditions show that $H$ vanishes on the set $\Lambda.$ Since $H$ is of order 2 and finite type, as long as $\Lambda$ is dense enough, we should have by analytic continuation that $H$ is identically zero. Since the same can be used for the respective Fourier transforms, we conclude the statement of Theorem \ref{thm:positive}. 

The last result we highlight in this manuscript is an application of the main techniques and ideas developed here, dealing with \emph{discrete} version of Hardy's uncertainty principle: 

\begin{theorem}\label{thm:discrete-hardy} Let $A>1$ be fixed. Then there is $C_A > 0$ such that the following holds. Let $\Lambda,\Gamma$ be two discrete sets, such that 
\begin{equation}\label{eq:upper-bound-density-hardy}
\max\left( \limsup_{j \to \pm \infty}  |\lambda_j - \lambda_{j+1}| |\lambda_j|, \limsup_{j \to \pm \infty} |\gamma_j - \gamma_{j+1}||\gamma_j| \right) < \frac{1}{C_A}.  
\end{equation}
Suppose that a function $f \in H^1\cap\mathcal{F}(H^1)$ is such that 
\[
|f(\lambda)| \le C \cdot e^{-A \pi |\lambda|^2},\,\forall \, \lambda \in \Lambda, \,\,|\widehat{f}(\gamma)| \le C \cdot e^{-A \pi |\gamma|^2},\,\forall \, \gamma \in \Gamma.
\]
Then $f \equiv 0.$
\end{theorem}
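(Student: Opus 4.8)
The plan is to derive Theorem~\ref{thm:discrete-hardy} from the \emph{classical} Hardy uncertainty principle recalled above. The idea is to first promote the discrete Gaussian bounds on $f$ and $\widehat f$ to pointwise Gaussian bounds on all of $\R$, and then to control the resulting exponents precisely enough to land in the range $a\cdot b>1$, where classical Hardy already forces $f\equiv 0$.

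The one substantial input is the propagation of Gaussian decay that underlies Theorem~\ref{thm:positive}(I), namely: if $h\in H^1\cap\mathcal F(H^1)$ satisfies $|h(\lambda)|\le Ce^{-\alpha\lambda^2}$ for $\lambda\in\Lambda$ and $|\widehat h(\gamma)|\le Ce^{-\alpha\gamma^2}$ for $\gamma\in\Gamma$, and the products $|\lambda_{i+1}-\lambda_i|\,|\lambda_i|$ and $|\gamma_{i+1}-\gamma_i|\,|\gamma_i|$ are eventually below a threshold depending only on $\alpha$, then $|h(x)|+|\widehat h(x)|\le C'e^{-\alpha'x^2}$ for all $x\in\R$. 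I would apply this with $h=f$ and $\alpha=A\pi$; after this rescaling, the hypotheses of Theorem~\ref{thm:discrete-hardy} are exactly what is needed. For Theorem~\ref{thm:positive} the value of $\alpha'$ does not matter, but for Hardy it is the whole point, so I would run the \cite{kns}-type argument in quantitative form: test finiteness of $\int_\R|h(x)|^2e^{2\pi tx^2}\,dx$ for $t$ as large as possible, split $\R$ into the intervals $[\lambda_i,\lambda_{i+1}]$, bound $h$ on each by $|h(\lambda_i)|$ plus a Poincar\'e--Wirtinger term on which the Gaussian weight is \emph{almost constant} precisely because $|\lambda_{i+1}-\lambda_i|\,|\lambda_i|$ is small, sum, and perform the mirror estimate for $\widehat h$; the interaction of the two weighted $L^2$ bounds then produces the stated pointwise decay. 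What matters is that the loss in the exponent is controlled by the density alone: with $\eta$ the maximum of the two $\limsup$s in \eqref{eq:upper-bound-density-hardy}, one gets $\alpha'\ge(1-\kappa(\eta))\,\alpha$ for a non-decreasing function $\kappa$ with $\kappa(\eta)\to 0$ as $\eta\to 0^+$.

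Granting this, the conclusion is bookkeeping. Applying the above to $f$ yields $a,b>0$ with $|f(x)|\le C'e^{-a\pi x^2}$, $|\widehat f(\xi)|\le C'e^{-b\pi\xi^2}$ on all of $\R$ and $\min(a,b)\ge(1-\kappa(1/C_A))\,A$. Since $A>1$ is fixed, I pick $C_A$ so large that $(1-\kappa(1/C_A))\,A>1$; then $a\cdot b>1$, and Hardy's theorem \cite{Hardy} gives $f\equiv 0$. (The constant $C'$ is irrelevant, as Hardy's statement does not see it.)

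The main obstacle is the quantitative version of the propagation step: one has to revisit the optimization-plus-Poincar\'e--Wirtinger scheme of \cite{kns} and check that the degradation of the Gaussian exponent in going from the nodes to all of $\R$ is $O(\eta)$, with no fixed multiplicative loss surviving as $\eta\to 0$; this is exactly what lets the threshold $C_A$ be finite for \emph{every} $A>1$. A secondary, more technical point is that using $h\in H^1\cap\mathcal F(H^1)$ forces one to match the derivative terms appearing in the Poincar\'e--Wirtinger estimate on the spatial side with Gaussian weights coming from the frequency side, and conversely, so the space-side and frequency-side parts of the estimate must be carried out simultaneously rather than one after the other.
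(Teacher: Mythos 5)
Your overall strategy---make the Kulikov--Nazarov--Sodin propagation of Gaussian decay quantitative, so that the exponent degrades only by a factor tending to $1$ as the density parameter $\eta\to 0^+$, and then choose $C_A$ large enough (depending on $A$) to stay above the Hardy threshold---is exactly the route the paper takes; the paper carries out the quantification you defer, with explicit parameter choices ($K\sim (A-1)^{-1}$, $u=\delta p^{1/2}$ with $\delta\sim (A-1)^{3/2}$, hence $C_A\sim(A-1)^{-3}$) and a Riemann-sum bound on $\sum_\lambda|\lambda|^{2p-1}|f(\lambda)|^2$ using the discrete hypothesis $|f(\lambda)|\le Ce^{-A\pi|\lambda|^2}$.

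There is, however, a genuine gap in your final step. The Poincar\'e--Wirtinger/moment-growth machinery controls quantities of the form $\int|x|^{2p}|f|^2$, and after optimizing in $p$ it delivers a \emph{weighted $L^2$} bound, $\int_{\R}|f(x)|^2e^{2(1+\eta)\pi|x|^2}\,dx<\infty$ (and likewise for $\widehat f$); it does not deliver the pointwise bound $|f(x)|\le C'e^{-a\pi x^2}$ that you need in order to invoke the classical Hardy theorem. Upgrading an $L^2$ Gaussian bound to a pointwise one is not free: the natural argument $|f(x)|^2\le 2\bigl(\int_x^\infty|f|^2\bigr)^{1/2}\bigl(\int_x^\infty|f'|^2\bigr)^{1/2}$ halves the exponent, turning $e^{-2(1+\eta)\pi x^2}$ in $L^2$ into $e^{-(1+\eta)\pi x^2/2}$ pointwise, which puts $a\cdot b$ \emph{below} $1$ and makes classical Hardy inapplicable; avoiding the loss would require Gaussian decay of $f'$ as well, which is not among your hypotheses and would have to be propagated separately. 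The paper sidesteps this entirely by concluding with the Cowling--Price uncertainty principle, which is precisely the $L^2$-weighted analogue of Hardy's theorem and accepts the bounds the propagation argument actually produces. Replacing your appeal to Hardy by an appeal to Cowling--Price (or, alternatively, supplying a lossless $L^2\to L^\infty$ upgrade) closes the gap; as written, the last paragraph of your argument does not go through.
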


Much as in the proof of Theorem \ref{thm:positive}, the proof of Theorem \ref{thm:discrete-hardy} goes through an explicitly quantitative version of Proposition \ref{prop:gaussian}. We then make thorough use of the estimates obtained in order to conclude that any functions $f$ satisfying the hypotheses of Theorem \ref{thm:discrete-hardy} has $L^2$ Gaussian decay of a sufficiently high order. This, together with the use of the Cowling-Price uncertainty principle \cite{Cowling-Price}, allows us to conclude the proof of that result.

Finally, we remark that, in light of the recent paper by D. Radchenko and the first author \cite{Radchenko-Ramos}, one is able to achieve suitable higher-dimensional generalizations of the main results in this manuscript. These results read as follows:

\begin{theorem}\label{thm:high-d-gen-pauli}Let $\Lambda, \Gamma$ denote two discrete subsets of $\R_+.$ The following assertions hold: 

\begin{enumerate}[\normalfont(I)]
\item Fix any $\alpha > 0.$ There is $C_{\alpha} > 0$ such that the following holds. Suppose $\Lambda, \Gamma$ satisfy \eqref{eq:sets}. Let $f,g:\R^d \to \C$ be two functions in $H^1 \cap \mathcal{F}(H^1).$ Suppose, moreover, that the function $g$ satisfies 
\[
|g(x)| + |\widehat{g}(x)| \le Ce^{-\alpha |x|^2},
\]
and that we have $|f(\lambda \cdot u)| = |g(\lambda \cdot u)|, \, \forall \, \lambda \in \Lambda, \, \forall \, u \in \mathbb{S}^{d-1}, \, \, |\widehat{f}(\gamma \cdot u)| = |\widehat{g}(\gamma \cdot u)|, \, \forall \, \gamma \in \Gamma, \, \forall \, u \in \mathbb{S}^{d-1}.$ Then it follows that $|f| \equiv |g|$ and $|\widehat{f}| \equiv |\widehat{g}|.$

\item Conversely, suppose that the sets $\Lambda, \Gamma$ satisfy \eqref{eq:weak-density}. 
Then there is $a_0 >0$ with the following property: for each $\alpha \in (0,a_0),$ there is an infinite-dimensional space of functions $E_{\alpha} \subset \mathcal{S}$ such that, if $f,g \in E,$ then 
\[
|f(x)| + |g(x)| + |\widehat{f}(x)| + |\widehat{g}(x)| \le C e^{-\alpha |x|^2},
\]
and $|f(\lambda \cdot u)| = |g(\lambda \cdot u)|, \, \forall \lambda \in \Lambda, \, \forall \, u \in \mathbb{S}^{d-1},$ as well as $|\widehat{f}(\gamma \cdot u)| = |\widehat{g}(\gamma \cdot u)|,\, \forall \gamma \in \Gamma,\, \forall \, u \in \mathbb{S}^{d-1},$ but it does \emph{not} hold that either $|f|=|g|$ or $|\widehat{f}| = |\widehat{g}|$. 
\end{enumerate}
\end{theorem}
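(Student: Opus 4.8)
The plan is to reduce Theorem~\ref{thm:high-d-gen-pauli} to the one-dimensional statement of Theorem~\ref{thm:positive} by means of a radialization/slicing procedure, following the philosophy of \cite{Radchenko-Ramos}. Concretely, for a fixed direction $u \in \mathbb{S}^{d-1}$ one would like to pass from the $d$-dimensional functions $f,g$ to suitable one-dimensional restrictions whose Fourier transforms are controlled by the restrictions of $\widehat f, \widehat g$. The natural device is to integrate $f$ over hyperplanes orthogonal to $u$ — i.e. to use the Radon transform — so that the one-dimensional Fourier transform of $R_u f(t) = \int_{u^\perp} f(tu + y)\,dy$ is exactly $\widehat f$ restricted to the line $\R u$. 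However, the hypothesis only gives us information on $|f|$ and $|\widehat f|$ along the rays $\lambda u$, not on the Radon projections themselves, so a direct application is not available. Instead I would first establish, exactly as in the proof sketch of Theorem~\ref{thm:positive}, that the Gaussian decay of $g$ together with the decay of $|f|,|\widehat f|$ at the nodes $\lambda u$ forces $f,\widehat f$ to have global Gaussian decay; for this one applies the one-dimensional Proposition~\ref{prop:gaussian}-type argument to the single-variable functions $t \mapsto f(tu)$ and $t\mapsto \widehat f(tu)$ for each fixed $u$, noting that these are restrictions to a line of functions in $H^1\cap \mathcal F(H^1)$ and hence (by the trace/Sobolev embedding in the relevant range) inherit enough regularity for the Poincaré--Wirtinger machinery to run with constants uniform in $u$.

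Once global Gaussian decay of $f,g,\widehat f,\widehat g$ is in hand, the Paley--Wiener step upgrades all four functions to entire functions on $\C^d$ of order $2$ and finite (explicitly controlled) type. The key object is again $H = f f^* - g g^*$, where $f^*(z) = \overline{f(\bar z)}$ is the holomorphic conjugate; the hypothesis $|f(\lambda u)| = |g(\lambda u)|$ for all $\lambda \in \Lambda$ and all $u \in \mathbb{S}^{d-1}$ says precisely that $H$ vanishes on the entire sphere $|x| = \lambda$ in $\R^d$ for each $\lambda \in \Lambda$. For each fixed unit vector $u$, the entire function of one variable $\zeta \mapsto H(\zeta u)$ is of order $2$ and finite type and vanishes on the discrete set $\Lambda$; by the density hypothesis \eqref{eq:sets} (with $C_\alpha$ chosen large enough that the type is dominated), a one-variable Jensen/Hadamard counting argument forces $H(\zeta u)\equiv 0$. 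Since this holds for every $u \in \mathbb S^{d-1}$, we conclude $H \equiv 0$ on $\R^d$, i.e. $|f| \equiv |g|$; running the identical argument with $\widehat H = \widehat f \widehat f^* - \widehat g \widehat g^*$ and the set $\Gamma$ gives $|\widehat f| \equiv |\widehat g|$. This proves part (I). For part (II), one simply transplants the one-dimensional counterexample family from Theorem~\ref{thm:positive}(II): take the functions $f_0, g_0 \in E_\alpha \subset \mathcal S(\R)$ produced there and set $f(x) = f_0(|x|)\,\omega(x)$, $g(x) = g_0(|x|)\,\omega(x)$ for a fixed radial Schwartz profile, or more robustly tensor/radialize the one-dimensional examples so that the radial behaviour along every ray reproduces the $(f_0,g_0)$ obstruction while the Fourier transform stays radial; the supercritical condition \eqref{eq:weak-density} is exactly what is needed on each ray, and radiality ensures $|\widehat f(\gamma u)| = |\widehat{g}(\gamma u)|$ for all $u$ while $|f| \not\equiv |g|$.

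The main obstacle I anticipate is the uniformity in the direction $u$ throughout the first step: the Poincaré--Wirtinger/optimization argument of \cite{kns} adapted in Proposition~\ref{prop:gaussian} yields constants depending on the $H^1\cap\mathcal F(H^1)$ norms of the one-dimensional slices $f(\cdot\,u)$, and one must check that the restriction-to-a-line operator maps $H^1(\R^d)\cap \mathcal F(H^1)(\R^d)$ into the relevant one-dimensional space \emph{with norm bounds independent of $u$}, which is where the hypothesis $\Lambda,\Gamma \subset \R_+$ and the use of the machinery of \cite{Radchenko-Ramos} (spherical averages, the relation between radial Fourier transform and a one-dimensional Hankel-type transform) becomes essential. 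A secondary technical point is that $|f(\lambda u)| = |g(\lambda u)|$ for all $u$ does not, a priori, say that the slice functions $\zeta \mapsto f(\zeta u)$ and $\zeta\mapsto g(\zeta u)$ agree in modulus on the \emph{one-dimensional} set $\{\pm\lambda : \lambda\in\Lambda\}$ with the sign structure needed for the entire-function argument — but since we work with $H = ff^* - gg^*$ directly and $H$ is genuinely real-analytic and vanishes on whole spheres, this is automatically handled by restricting $H$ itself to each complex line, bypassing any need to track phases of $f$ and $g$ separately.
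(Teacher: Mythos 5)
Your reduction to dimension one by slicing does not work in the form you describe, and the point where it breaks is precisely the one you flag and then set aside. By the projection--slice theorem, the one-dimensional Fourier transform of the restriction $t\mapsto f(tu)$ is the integral of $\widehat f$ over hyperplanes orthogonal to $u$, \emph{not} the restriction $t\mapsto \widehat f(tu)$; the two one-variable functions $t\mapsto f(tu)$ and $t\mapsto\widehat f(tu)$ are not a Fourier pair. Consequently the hypothesis $|\widehat f(\gamma u)|=|\widehat g(\gamma u)|$ gives no control whatsoever on the one-dimensional Fourier transform of the slice, and Proposition~\ref{prop:gaussian} simply cannot be invoked for the pair of slices: the Poincar\'e--Wirtinger step converts $\int|\phi'|^2$ into $\int|y|^2|\widehat\phi|^2$ via Plancherel, and for a slice that frequency-side object is not something your data sees. (There is also the secondary issue that restriction of an $H^1(\R^d)$ function to a line is not defined for $d\ge 3$.) The paper avoids this entirely by never slicing on the physical side: it works with the spherical $L^2$ average $h_f(t)=(\int_{\mathbb S^{d-1}}|f(tw)|^2\,d\mathcal H^{d-1}(w))^{1/2}$ and proves a Poincar\'e inequality on annuli (Claim~\ref{claim:poincare}, Lemma~\ref{lemma:poincare}), so that Plancherel in $\R^d$ turns the gradient term into $\int_{\R^d}|\xi|^2|\widehat f(\xi)|^2\,d\xi$ and the boundary terms into sphere integrals of $|f|^2$ at radii in $\Lambda$ --- exactly the data the hypotheses provide. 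Your final step for Part (I) (restricting the entire function $H=ff^*-gg^*$ of order $2$ on $\C^d$ to complex lines and counting zeros) is sound and is essentially what the paper does, but it only becomes available after the global Gaussian decay has been established by the annulus argument.

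Part (II) has the same flaw in a different guise. Setting $f(x)=f_0(|x|)$ for a one-dimensional counterexample $f_0$ does make $\widehat f$ radial, but the radial profile of $\widehat f$ is a Hankel/Bessel-type transform of $f_0$, not $\widehat{f_0}$; so the vanishing of $\widehat{f_0}$ on $\Gamma$ tells you nothing about $\widehat f$ on the spheres of radii $\gamma\in\Gamma$, and the required identity $|\widehat f(\gamma u)|=|\widehat g(\gamma u)|$ is not ensured. This is why the paper does not transplant the one-dimensional family but instead rebuilds the counterexample in $\R^d$: it uses Levin's theorem (Proposition~\ref{thm:levin}) to produce even entire functions $S$ of order $2$ with prescribed zeros containing $\pm\Lambda$, forms the radial functions $\sigma_k(x)=S_k(|x|)$ with $S_k(z)=S(z)/(z^2-\lambda_k^2)$, proves Gaussian bounds on $\widehat{\sigma_k}$ directly by a contour deformation adapted to the $d$-dimensional Fourier integral, and then runs the iteration scheme of \cite{kns} and \cite{Radchenko-Ramos} to produce an infinite-dimensional space of functions vanishing on the spheres of $\Lambda$ with Fourier transform vanishing on the spheres of $\Gamma$. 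Both gaps stem from the same misconception, namely that restriction to lines or radialization commutes with the Fourier transform; repairing your argument essentially forces you back onto the paper's route.
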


\begin{theorem}\label{thm:discrete-hardy-high-d} Let $A>1$ be fixed. Then there is $C_A > 0$ (which may depend on the dimension $d$) such that the following holds. Let $\Lambda,\Gamma \subset \R_+$ be two discrete sets satisfying \eqref{eq:upper-bound-density-hardy}.
Suppose that a function $f \in H^1\cap\mathcal{F}(H^1)$ is such that 
\[
|f(\lambda \cdot u)| \le C \cdot e^{-A \pi |\lambda|^2},\,\forall \, \lambda \in \Lambda, \, \forall \, u \in \mathbb{S}^{d-1}, \,  \,\,|\widehat{f}(\gamma \cdot u)| \le C \cdot e^{-A \pi |\gamma|^2},\,\forall \, \gamma \in \Gamma, \, \forall \, u \in \mathbb{S}^{d-1}. 
\]
Then $f \equiv 0.$
\end{theorem}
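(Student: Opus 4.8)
The plan is to deduce Theorem \ref{thm:discrete-hardy-high-d} from the one-dimensional mechanism underlying Theorem \ref{thm:discrete-hardy} by passing to spherical harmonics, following the lifting strategy of \cite{Radchenko-Ramos}. Write the decomposition into solid spherical harmonics
\[
f(x) = \sum_{k \ge 0} \sum_{\ell = 1}^{N(k,d)} \varphi_{k,\ell}(|x|)\, Y_{k,\ell}(x/|x|),
\]
where $\{Y_{k,\ell}\}_{\ell}$ is an orthonormal basis of the degree-$k$ spherical harmonics on $\mathbb{S}^{d-1}$ and the $\varphi_{k,\ell}\colon \R_+ \to \C$ are the radial profiles. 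By Bochner's relation the Fourier transform preserves each sector: $\mathcal{F}(\varphi_{k,\ell}(|\cdot|)\, Y_{k,\ell}) = \psi_{k,\ell}(|\cdot|)\, Y_{k,\ell}$, where $\psi_{k,\ell}$ is, up to an explicit normalization and power of the radial variable, the Hankel transform of order $d/2 - 1 + k$ of $\varphi_{k,\ell}$; equivalently, $\varphi_{k,\ell} \mapsto \psi_{k,\ell}$ is the Fourier transform in the (fractional) dimension $d + 2k$ acting on radial functions.

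First I would extract discrete Gaussian bounds for each radial profile. Since the hypotheses hold for every direction $u \in \mathbb{S}^{d-1}$, integrating $|f(\lambda u)| \le C e^{-A\pi|\lambda|^2}$ against $\overline{Y_{k,\ell}(u)}$ over the sphere gives
\[
|\varphi_{k,\ell}(\lambda)| \le C\,\|Y_{k,\ell}\|_{L^1(\mathbb{S}^{d-1})}\, e^{-A\pi|\lambda|^2}, \qquad \forall\, \lambda \in \Lambda,
\]
and similarly $|\psi_{k,\ell}(\gamma)| \le C\,\|Y_{k,\ell}\|_{L^1(\mathbb{S}^{d-1})}\, e^{-A\pi|\gamma|^2}$ for all $\gamma \in \Gamma$; the norms $\|Y_{k,\ell}\|_{L^1(\mathbb{S}^{d-1})}$ grow at most polynomially in $k$, which will be harmless.

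The core step is then, for each fixed pair $(k,\ell)$, to run the quantitative version of Proposition \ref{prop:gaussian} with the Hankel transform of order $d/2 - 1 + k$ in place of the one-dimensional Fourier transform — this is exactly the point at which the interpolation-formula input of \cite{Radchenko-Ramos} is used. Choosing $C_A$ large enough (depending on $A$ and on $d$), this upgrades the discrete Gaussian decay on $\Lambda$, respectively $\Gamma$, to genuine $L^2$ Gaussian decay of order $A' \in (1,A)$ of $\varphi_{k,\ell}$, respectively $\psi_{k,\ell}$, on all of $\R_+$. The gap condition \eqref{eq:upper-bound-density-hardy} is the right one because the critical node density for the Hankel transform of any order is still of $\sqrt{n}$ type. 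Since $A'^2 > 1$, Hardy's uncertainty principle for the Hankel transform of order $d/2 - 1 + k$ — equivalently, the Cowling--Price inequality \cite{Cowling-Price} in dimension $d + 2k$ restricted to radial functions — forces $\varphi_{k,\ell} \equiv 0$. Summing over $(k,\ell)$ yields $f \equiv 0$.

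The main obstacle is uniformity in the spherical-harmonic degree: one needs a single threshold $C_A$ (allowed to depend on $d$) that works for every component simultaneously, which requires the Gaussian-subordinated interpolation bases of \cite{Radchenko-Ramos} and the accompanying Poincar\'e--Wirtinger/optimization argument of Proposition \ref{prop:gaussian} to behave uniformly as the Hankel order $d/2 - 1 + k$ grows. A secondary point is to record the Hankel-transform form of the Cowling--Price inequality with the correct critical exponent; alternatively one may avoid it by reassembling the components — the polynomial-in-$k$ losses are summable against the super-exponential gain — to obtain global Gaussian decay of $f$ and $\widehat f$ on $\R^d$ and then invoke the standard $d$-dimensional Cowling--Price inequality directly.
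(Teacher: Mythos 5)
Your route---spherical-harmonic decomposition, Bochner's relation, and a per-component Hankel-transform version of the one-dimensional argument---is genuinely different from the paper's, and it has a real gap exactly where you flag ``the main obstacle'': the uniformity in the degree $k$ of the constants in the discrete-to-continuous decay upgrade. The theorem demands a \emph{single} threshold $C_A$ (depending only on $A$ and $d$); if the threshold needed to propagate Gaussian decay through the Hankel transform of order $\nu = d/2-1+k$ grows with $k$, the argument collapses, since $f$ generically has nonzero components of every degree. There are concrete reasons to worry: the radial measure $r^{d+2k-1}\,dr$ makes the Riemann-sum step (the analogue of \eqref{eq:upper-bound-discrete}) produce $k$-dependent Gamma factors; the effective lower cutoff $X_0$ below which the interval-by-interval Wirtinger argument is vacuous grows with the Hankel order (the profile $\varphi_{k,\ell}$ vanishes to order $k$ at the origin and the relevant Bessel oscillation only sets in at radius comparable to $\nu$); and the constant in the ``dimension $d+2k$'' Cowling--Price step must also be tracked. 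Asserting that ``the critical node density for the Hankel transform of any order is still of $\sqrt{n}$ type'' addresses only the asymptotic density of Bessel zeros, not the quantitative constants your argument actually needs. Your fallback (reassembling the components and invoking the $d$-dimensional Cowling--Price at the end) does not escape the problem, because it still presupposes the per-component upgrade with constants summable in $k$.

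The paper avoids the decomposition altogether, and this is the point to internalize: since the hypotheses give decay on \emph{entire} spheres $\lambda\cdot\mathbb{S}^{d-1}$, one controls the full boundary integrals $\int_S |f|^2\,d\mathcal{H}^{d-1}$, which is precisely the boundary data needed for an annular Poincar\'e--Wirtinger inequality (Claim \ref{claim:poincare}). Setting $h_u(t) = \bigl(\int_{\mathbb{S}^{d-1}}|u(t\cdot w)|^2\,d\mathcal{H}^{d-1}(w)\bigr)^{1/2}$ reduces the annulus $B(0,R)\setminus B(0,r)$ to a one-dimensional interval with constant $(R/r)^{d-1}$, which tends to $1$ on the thin annuli far from the origin that matter; the entire moment/optimization machinery of Theorem \ref{thm:discrete-hardy} then runs on $f$ itself in $\R^d$ with a single, degree-independent constant, after which the standard $d$-dimensional Cowling--Price finishes. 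To salvage your approach you would need to prove the uniform-in-$k$ Hankel statement, which is a substantial piece of work in its own right and is not supplied by the references you cite.
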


This paper is organized as follows. First, we prove Theorems \ref{thm:negative} and \ref{thm:positive} in Section 1. We subsequently present the proof of Theorem \ref{thm:discrete-hardy}, and move on to prove the higher-dimensional results (Theorems \ref{thm:high-d-gen-pauli} and \ref{thm:discrete-hardy-high-d}) in the Section thereafter. Finally, in the final section of this manuscript, we shall comment on possible generalizations and open problems stemming from the main results and consideration of this paper. 

\subsection*{Acknowledgements} The authors are thankful to Mitchell Taylor for conversations which encouraged us to investigate the results in this manuscript. MS is supported by grants RYC2022-038226-I and PID2020-113156GB-I00 funded by MICIU/AEI/10.13039/501100011033 and by ESF+, the Basque Government through the BERC 2022-2025 program, and through BCAM Severo Ochoa accreditation CEX2021-001142-S / MICIN / AEI / 10.13039/501100011033. 

\section{Proof of Theorems \ref{thm:negative} and \ref{thm:positive}}

\subsection{Proof of Theorem \ref{thm:negative}} Let us quickly show that, without restrictions on either the sets $\Lambda, \Gamma$ or on at least one of the functions $f,g,$ global Pauli partners cannot be inferred from local ones. \\

\begin{proof}[Proof of Theorem \ref{thm:negative}] Let $\Lambda, \Gamma$ be two arbitrary discrete sets on $\R.$ Let $I \subset \R$ be an interval such that $I \cap \Lambda = \emptyset.$ Let $g$ be a smooth function, such that $\text{supp}(g) \subset I.$ 

We then take $h:\R \to \R$ any smooth function such that $|\widehat{h}| \not \equiv |\widehat{h+g}|$ and $\widehat{h}|_{\Gamma} = - \frac{1}{2}\widehat{g}|_{\Gamma}.$ This function exists trivially: consider, for instance, $-\frac{1}{2}\cdot g + \sum_{i \ge 0} \psi_i,$ where $\supp(\widehat{\psi_i}) \cap \Gamma = \emptyset.$ We claim that $h+g$ and $h$ satisfy that $|h+g| = |h|$ on $\Lambda$ and $|\widehat{h+g}| = |\widehat{h}|$ on $\Gamma,$ but $|h+g| \neq |h|$ and $|\widehat{h+g}| \neq |\widehat{h}|.$ 

Indeed, since the support of $g$ does not intersect $\Lambda,$ the assertion that $|h+g| = |h|$ on $\Lambda$ is trivially true. Moreover, since $\widehat{h} = - \frac{1}{2} \widehat{g}$ on $\Gamma,$ we have by the previous considerations $|\widehat{h+g}| = |\widehat{h}|$ on $\Gamma$.
Furthermore, since $g \not\equiv 0,$ we have that $|h+g| \not\equiv |h|,$ and the same holds for the last assertion. 
\end{proof}

\begin{remark}
Notice that, as non-intuitive as it might seem, the construction above does not contradict in any way the positive result Theorem \ref{thm:positive}. Indeed, two main aspects are to be noted in the construction above: first of all, we have not assumed anything on the functions involved besides the relations between them needed in order to run the construction. As a matter of fact, the function $h$ is taken such that it interpolates, on the Fourier side, the function $-g/2$ at specific nodes. 

Suppose then that the sets $\Lambda$ and $\Gamma$ and the function $h$ satisfy the conditions in of Theorem \ref{thm:positive}. Evidently, the function $g$ constructed above must satisfy Gaussian decay at the nodes of $\Lambda$, and its Fourier transform must satisfy Gaussian decay at the nodes of $\Gamma.$ By Proposition \ref{prop:gaussian} below, this implies that $g$ itself has Gaussian decay both in space and in frequency. By Paley-Wiener, $g$ can be extended as an analytic function on the whole complex plane, which, by the fact that $g$ is compactly supported, implies that $g \equiv 0.$ 

Note that we may even relax the hypothesis that $g$ is compactly supported for this argument: Proposition \ref{prop:gaussian} still implies that $g$ has Gaussian decay in space and in frequency, and since by the proof of Theorem \ref{thm:positive} both $\Lambda$ and $\Gamma$ are taken to be uniqueness sets for analytic functions, the hypotheses still imply that $g \equiv 0$, as desired. 
\end{remark}

\subsection{Proof of Theorem \ref{thm:positive}} We start with the proof of Part (I) of Theorem \ref{thm:positive}. We first prove that, if a function $f$ and its Fourier transform $\widehat{f}$ both have Gaussian decay at specific discrete sets of points, then the Gaussian decay may be extended to the full line. 

\begin{proposition}\label{prop:gaussian} Suppose $\Lambda=\{\lambda_i\}_{i \in \Z}, \Gamma=\{\gamma_i\}_{i \in \Z}$ are such that
\begin{equation}
\max\left( \limsup_{i \to \pm \infty} |\lambda_{i+1} - \lambda_i| |\lambda_i|, \limsup_{j \to \pm \infty} |\gamma_{j+1} - \gamma_j| |\gamma_j|\right) < \frac{1}{2}.
\end{equation}
 Suppose that a function $f:\R \to \mathbb{C}$ belongs to $H^1 \cap \mathcal{F}(H^1)$ and satisfies that 
\[
|f(\lambda)| \le C e^{-a |\lambda|^2}, \, \forall \lambda \in \Lambda, \,\, |\widehat{f}(\gamma)| \le C e^{-a|\gamma|^2},\, \, \forall \gamma \in \Gamma. 
\]
Then there is $c_a > 0$ Such that 
\[
|f(x)| + |\widehat{f}(x)| \le C e^{-c_a |x|^2}, \, \forall x \in \R. 
\]
\end{proposition}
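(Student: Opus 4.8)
The plan is to follow the optimization strategy of Kulikov–Nazarov–Sodin \cite{kns}, adapted to the two-sided decay setting. Fix a large parameter $R$ and consider, for a parameter $t > 0$ to be optimized, the weighted quantity
\[
\Phi(t) = \int_{\R} |f(x)|^2 e^{2t x^2}\, dx + \int_{\R} |\widehat f(\xi)|^2 e^{2t \xi^2}\, d\xi,
\]
or rather a truncated/regularized version thereof (e.g. replacing $e^{2tx^2}$ by $e^{2tx^2}(1+\eta x^2)^{-N}$ and letting $\eta \to 0$ at the end) so that all integrals are a priori finite; the hypothesis $f \in H^1 \cap \mathcal{F}(H^1)$ guarantees enough regularity to run the argument. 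The core mechanism is a Poincaré–Wirtinger (interval) inequality: on each gap $[\lambda_i, \lambda_{i+1}]$ one controls $\int_{\lambda_i}^{\lambda_{i+1}} |f(x)|^2 w(x)\, dx$ — for a weight $w$ comparable on the interval — by $|f(\lambda_i)|^2$, $|f(\lambda_{i+1})|^2$ (which are Gaussian-small by hypothesis) plus $|\lambda_{i+1}-\lambda_i|^2$ times $\int_{\lambda_i}^{\lambda_{i+1}} |f'(x)|^2 w(x)\, dx$. Summing over $i$ and using the assumed bound $|\lambda_{i+1}-\lambda_i||\lambda_i| < \tfrac12 - \delta$ for $|i|$ large, together with $\|xf\|_2 \asymp \|\widehat f{}'\|_2$ and the analogous identity on the frequency side, one obtains a self-improving inequality of the schematic form $\Phi(t) \le (\text{Gaussian data}) + \kappa(t)\, \Phi(t)$ with $\kappa(t) < 1$ for all $t$ below a threshold $c_a$ determined by $\delta$ and $a$. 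This forces $\Phi(c_a) < \infty$, i.e. $e^{c_a x^2} f \in L^2$ and $e^{c_a \xi^2}\widehat f \in L^2$.

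The passage from $L^2$-Gaussian decay to the pointwise bound $|f(x)| + |\widehat f(x)| \le C e^{-c_a|x|^2}$ is then standard: if $e^{cx^2}f \in L^2$ and $e^{c\xi^2}\widehat f \in L^2$ for some $c>0$, then (by e.g. Cauchy–Schwarz applied to $f(x) = \int \widehat f(\xi) e^{2\pi i x\xi}\, d\xi$, splitting the Gaussian weight, or by a Sobolev embedding on shifted windows $[x-1,x+1]$ using the frequency-side decay to bound $f'$) one gets pointwise Gaussian decay at a slightly smaller rate, which we absorb into the constant $c_a$. One should be slightly careful to track that the finitely many "bad" gaps near the origin (where the density hypothesis may fail) contribute only a harmless bounded quantity, since there the integrand is over a compact set and $f \in L^2$ already.

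The main obstacle, as in \cite{kns}, is getting the constant in the Poincaré–Wirtinger step to interact correctly with the exponential weight $e^{2tx^2}$: one needs the inequality on the gap $[\lambda_i,\lambda_{i+1}]$ with the weight included, and the weight is not constant across the gap. The fix is that $e^{2tx^2}$ varies by a factor $e^{O(t|\lambda_i||\lambda_{i+1}-\lambda_i|)} = e^{O(t)}$ across a gap — bounded once $t$ is bounded — so the weighted Poincaré constant is $(1+o(1))$ times $|\lambda_{i+1}-\lambda_i|^2/\pi^2$ (or the relevant sharp constant), and the extra factor can be made as close to $1$ as desired by further shrinking the threshold for $t$; the condition $\limsup |\lambda_{i+1}-\lambda_i||\lambda_i| < \tfrac12$ is precisely what makes the resulting $\kappa(t) < 1$ survive. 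Choosing $C_\alpha$ (equivalently, the admissible $t$) small enough relative to the gap in \eqref{eq:sets} closes the estimate, and one reads off $c_a$ from the final admissible $t$.
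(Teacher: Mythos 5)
Your overall skeleton is the right one (it is the Kulikov--Nazarov--Sodin scheme the paper also follows: Wirtinger on the gaps with Gaussian-small boundary data, a space/frequency self-improvement, then $L^2$-Gaussian $\Rightarrow$ pointwise Gaussian, which you actually spell out more carefully than the paper does). But the crux of the argument --- how the derivative term produced by Wirtinger gets converted into the \emph{frequency-side} half of your quantity $\Phi(t)$ so that the inequality closes with $\kappa(t)<1$ --- is exactly the step you leave schematic, and the direct route you sketch does not close. Concretely: applying Lemma \ref{lemma:wirtinger} on the gap $I_i=[\lambda_i,\lambda_{i+1}]$ to $g=fe^{tx^2}$ and summing, the derivative contribution is
\[
\sum_i \frac{|I_i|^2}{\pi^2}\int_{I_i}\bigl|(fe^{tx^2})'\bigr|^2 \approx \frac{(1-2\delta)^2}{4\pi^2}\int \frac{|f'(x)+2txf(x)|^2}{x^2}\,e^{2tx^2}\,dx .
\]
The $2txf$ piece is indeed absorbable for small $t$, as you say, but the $f'$ piece leaves $\int |f'|^2 e^{2tx^2}x^{-2}\,dx$, and there is no Plancherel identity turning this into $\kappa\int|\widehat f(\xi)|^2e^{2t\xi^2}\,d\xi$: the identity $\|f'\|_2^2=4\pi^2\|\xi\widehat f\|_2^2$ you invoke is only available \emph{unweighted}, and $e^{tx^2}$ is not tempered, so $\widehat{f'e^{tx^2}}$ cannot be expanded as a convolution either. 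This is precisely the obstruction that forces the extra machinery in \cite{kns}.

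The paper circumvents it by never putting the Gaussian weight inside the Wirtinger step. It works with polynomial moments $\int|x|^{2p}|f|^2$, localizes $f$ by translates $F_\xi$ of a compactly supported convolution kernel so that the local gap length (hence the Poincar\'e constant) matches the local value of $|x|^{2p}$, and uses the convexity inequality of Proposition \ref{prop:convex} with $\Phi(s)=s^p$ to transfer the derivative term to $\int|y|^{2p}|\widehat f|^2$ at the cost of a factor $<\tfrac12$ plus node sums $\sum_\lambda|\lambda|^{2p-1}|f(\lambda)|^2$. Running this in both directions gives a closed two-sided system of moment inequalities; the node sums are bounded by $\rho^p\Gamma(p+1)$ via a Riemann-sum comparison using the hypothesis at the nodes; and only at the very end does one optimize/sum over $p$ (Stirling) to reassemble $\int e^{\beta x^2}|f|^2<\infty$. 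If you want to keep a genuinely ``fixed Gaussian weight'' bootstrap you would need a different mechanism (e.g.\ a log-convexity argument in $t$), which you have not supplied; as written, the proposal has a real gap at its central step.
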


In order to prove this proposition, we will need a preliminary result, which is an alternative form of Wirtinger's inequality. We remark that this present version is taken from Lemma 1 in \cite{kns}, and we refer the reader to that paper for its proof. 

\begin{lemma}\label{lemma:wirtinger} Suppose $f:\R \to \C$ is a $H^1$ function. Then for each $\varepsilon > 0,$ we have 
\begin{equation}\label{eq:wirtinger}
\int_{I} |f(x)|^2 \, dx \le \pi^{-2} |I|^2 (1+\varepsilon) \int_I |f'|^2 + (1+\varepsilon^{-1})|I| \left( |f(a)|^2 + |f(b)|^2\right).
\end{equation}
\end{lemma}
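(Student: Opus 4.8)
The plan is to reduce \eqref{eq:wirtinger} to the classical Wirtinger--Poincar\'e inequality with vanishing endpoint data, by subtracting off the affine interpolant of the boundary values. Writing $I = [a,b]$ and $L = |I|$, after a translation we may assume $I = [0,L]$. The first ingredient is the Dirichlet version: if $h \in H^1(I)$ with $h(0) = h(L) = 0$, then $\int_I |h|^2 \le \pi^{-2} L^2 \int_I |h'|^2$. This is just the statement that the bottom of the Dirichlet spectrum of $-d^2/dx^2$ on $I$ is $(\pi/L)^2$: the functions $\sqrt{2/L}\,\sin(n\pi x/L)$, $n\ge 1$, form an orthonormal basis of $L^2(I)$, and for $h \in H^1$ with zero trace one has, via Parseval, $\int_I |h|^2 = \sum_n |c_n|^2$ and $\int_I |h'|^2 = \sum_n (n\pi/L)^2 |c_n|^2 \ge (\pi/L)^2 \int_I |h|^2$, the derivative series converging in $L^2$ precisely because $h \in H^1$.

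Next, given $f \in H^1(I)$, set $\ell(x) = f(0)\,(1 - x/L) + f(L)\,x/L$ and $f_0 = f - \ell$, so that $f_0(0) = f_0(L) = 0$. From the elementary pointwise bound $|u+v|^2 \le (1+\varepsilon)|u|^2 + (1+\varepsilon^{-1})|v|^2$ (which is $2\,\real(u\bar v) \le \varepsilon|u|^2 + \varepsilon^{-1}|v|^2$) one integrates to obtain $\int_I |f|^2 \le (1+\varepsilon)\int_I |f_0|^2 + (1+\varepsilon^{-1})\int_I |\ell|^2$. Applying the Dirichlet inequality to $f_0$ gives $\int_I |f_0|^2 \le \pi^{-2} L^2 \int_I |f_0'|^2$, and here one checks that the derivative term does not grow: since $\ell' \equiv (f(L)-f(0))/L$ is constant and $\int_I f' = f(L) - f(0) = L\ell'$, expanding $f_0' = f' - \ell'$ yields $\int_I |f_0'|^2 = \int_I |f'|^2 - L|\ell'|^2 \le \int_I |f'|^2$. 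Hence $\int_I |f_0|^2 \le \pi^{-2} L^2 \int_I |f'|^2$. Finally, a direct computation gives $\int_I |\ell|^2 = \tfrac{L}{3}\bigl(|f(0)|^2 + \real(f(0)\overline{f(L)}) + |f(L)|^2\bigr) \le \tfrac{L}{2}\bigl(|f(0)|^2 + |f(L)|^2\bigr) \le L\bigl(|f(0)|^2 + |f(L)|^2\bigr)$. Combining the three estimates yields \eqref{eq:wirtinger} exactly as stated (in fact with the constant $\tfrac12$ in place of $1$ on the boundary term).

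As for the difficulty: this argument is essentially bookkeeping, so there is no single hard obstacle. The one point that requires a little care is the Dirichlet inequality in the $H^1$ rather than $C^\infty$ category — that is, that the Rayleigh quotient $\int_I|h'|^2/\int_I|h|^2$ is bounded below by $(\pi/L)^2$ for \emph{every} $h \in H^1_0(I)$, not merely for smooth $h$; this is settled by a routine density/closure argument (or by noting that in one dimension $H^1 \hookrightarrow C^0$, so the odd $2L$-periodic extension of $h$ lies in $H^1$ of the circle and its Fourier series converges there). The other thing to watch is simply to verify that the cross-term $-L|\ell'|^2$ has the right (negative) sign so that it can be discarded, and that the algebraic inequality $\real(f(0)\overline{f(L)}) \le \tfrac12(|f(0)|^2 + |f(L)|^2)$ is invoked in the correct direction.
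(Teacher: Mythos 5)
The paper itself does not prove this lemma; it is quoted verbatim from \cite[Lemma~1]{kns}, and the reader is simply referred there for the proof. Your argument is a correct and complete self-contained derivation along the standard lines: subtract the affine interpolant $\ell$ of the endpoint values, bound the cross term with $|u+v|^2 \le (1+\varepsilon)|u|^2 + (1+\varepsilon^{-1})|v|^2$, apply the sharp Dirichlet Poincar\'e--Wirtinger inequality to the remainder $f_0 = f-\ell \in H^1_0(I)$, and note that $\int_I |f_0'|^2 = \int_I |f'|^2 - |I|\,|\ell'|^2 \le \int_I |f'|^2$, since $\ell'$ is precisely the mean value of $f'$ so the subtracted piece is orthogonal. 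The final computation $\int_I |\ell|^2 \le \tfrac{|I|}{2}(|f(a)|^2+|f(b)|^2)$ is also correct, and as you observe the argument actually produces the slightly sharper boundary coefficient $|I|/2$ in place of the $|I|$ appearing in the statement. Your remarks on the $H^1$ (rather than smooth) category and on the sign of the discarded term are the right things to check, and both are handled correctly.
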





This inequality will be of particular importance to us for the next result, which is a rewriting of \cite[Lemma~3.(i)]{kns}. For convenience, we provide a proof of it below. 

\begin{proposition}\label{prop:convex} Suppose that $f:\R \to \C$ is a Schwartz function which vanishes on a set $\tilde{\Lambda},$ such that $\R \setminus \tilde{\Lambda}$ is composed of intervals of length at most $(1-\varepsilon)\cdot (2\mu)^{-1}.$ Then, for any $C^1,$ increasing, convex function $\Phi:[0,\infty) \to \R$, we have 
\begin{align}\label{eq:conclusion-wirtinger}
\Phi(\mu^2) \int_{\R}|f(y)|^2 \, dy \le  \int_{\R} \Phi(|y|^2) |\widehat{f}(y)|^2 \, dy + C_{\varepsilon} \mu \cdot \Phi'(\mu^2) \sum_{\lambda \in \tilde{\Lambda}} |f(\lambda)|^2 
\end{align}
\end{proposition}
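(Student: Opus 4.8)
The goal is to upgrade the pointwise vanishing of $f$ on $\tilde\Lambda$ into the weighted $L^2$ inequality \eqref{eq:conclusion-wirtinger} by running Wirtinger's inequality (Lemma \ref{lemma:wirtinger}) over each gap interval and summing. First I would write $\R\setminus\tilde\Lambda=\bigsqcup_k I_k$, where each $I_k=(a_k,b_k)$ has $|I_k|\le(1-\varepsilon)(2\mu)^{-1}$ and the endpoints $a_k,b_k\in\tilde\Lambda$. Applying \eqref{eq:wirtinger} on each $I_k$ with a parameter $\varepsilon'$ to be chosen, and summing over $k$, gives
\begin{equation*}
\int_{\R}|f|^2\,dx\le \pi^{-2}(1-\varepsilon)^2(2\mu)^{-2}(1+\varepsilon')\int_{\R}|f'|^2\,dx + (1+\varepsilon'^{-1})(1-\varepsilon)(2\mu)^{-1}\sum_{\lambda\in\tilde\Lambda}2|f(\lambda)|^2,
\end{equation*}
where I used that each $\lambda\in\tilde\Lambda$ is an endpoint of at most two gap intervals. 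Since $\int|f'|^2=4\pi^2\int|y|^2|\widehat f(y)|^2\,dy$ by Plancherel, this already yields the prototype inequality \eqref{eq:conclusion-wirtinger} in the special case $\Phi(t)=t$: choosing $\varepsilon'$ so that $(1-\varepsilon)^2(1+\varepsilon')\le 1$ (possible since $\varepsilon>0$) absorbs the constant, and the frequency-side term becomes $\le\int |y|^2|\widehat f|^2$, while the boundary term is $\le C_\varepsilon\mu^{-1}\sum|f(\lambda)|^2$. Wait — the claimed inequality has $\mu\,\Phi'(\mu^2)$, and for $\Phi(t)=t$ that is $\mu$, not $\mu^{-1}$; so in fact for the linear case one gets the stronger bound with $\mu^{-1}$, which is consistent with (and implies) the stated one. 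So the linear case is immediate.

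\textbf{Passing to general convex $\Phi$.} The standard trick (this is exactly the structure of \cite[Lemma 3]{kns}) is to apply the linear case not to $f$ itself but to Fourier multipliers of $f$, and then integrate against a measure built from $\Phi$. Concretely, since $\Phi$ is $C^1$, increasing and convex on $[0,\infty)$, I would write, for $t\ge 0$,
\begin{equation*}
\Phi(t)=\Phi(0)+\int_0^\infty \Phi'(s)\,\mathbf 1_{\{s\le t\}}\,ds,
\end{equation*}
so that $\Phi(|y|^2)=\Phi(0)+\int_0^\infty\Phi'(s)\,\mathbf 1_{\{|y|^2\ge s\}}\,ds$. Denoting by $f_s$ the function with $\widehat{f_s}=\widehat f\cdot\mathbf 1_{\{|y|^2\ge s\}}$ — i.e. the high-frequency truncation at level $\sqrt s$ — one has $\int_\R\Phi(|y|^2)|\widehat f(y)|^2\,dy=\Phi(0)\|f\|_2^2+\int_0^\infty\Phi'(s)\|f_s\|_2^2\,ds$, and similarly $\Phi(\mu^2)\|f\|_2^2=\Phi(0)\|f\|_2^2+\int_0^{\mu^2}\Phi'(s)\|f\|_2^2\,ds$. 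Thus it suffices to show, for each $0\le s\le\mu^2$,
\begin{equation*}
\|f\|_2^2\le \|f_s\|_2^2 + C_\varepsilon\,\mu\sum_{\lambda\in\tilde\Lambda}|f(\lambda)|^2,
\end{equation*}
because integrating this against $\Phi'(s)\,ds$ over $[0,\mu^2]$, bounding $\int_0^{\mu^2}\Phi'(s)\,ds\le\mu^2\,\Phi'(\mu^2)$ for the last term (here convexity of $\Phi$, i.e. monotonicity of $\Phi'$, is used — actually this gives $\mu^2\Phi'(\mu^2)$ whereas the statement wants $\mu\Phi'(\mu^2)$; I will double-check the normalization, but in the regime $\mu\ge 1$ the stated bound follows, and for small $\mu$ one can rescale, so this matches \cite{kns}), and extending the $s$-integral of $\Phi'(s)\|f_s\|_2^2$ up to $\infty$, yields \eqref{eq:conclusion-wirtinger}.

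\textbf{The per-level estimate.} To prove the displayed per-level inequality I would not apply Wirtinger to $f_s$ directly (it need not vanish on $\tilde\Lambda$), but instead observe $\|f\|_2^2-\|f_s\|_2^2=\int_{|y|^2<s}|\widehat f(y)|^2\,dy=\|g_s\|_2^2$ where $\widehat{g_s}=\widehat f\cdot\mathbf 1_{\{|y|^2<s\}}$, and apply the linear case to $g_s$. Since $g_s$ is band-limited to $[-\sqrt s,\sqrt s]\subset[-\mu,\mu]$, we have $\int|g_s'|^2=4\pi^2\int_{|y|<\sqrt s}|y|^2|\widehat f|^2\le 4\pi^2 s\,\|g_s\|_2^2\le 4\pi^2\mu^2\|g_s\|_2^2$, so the linear Wirtinger bound on $g_s$ reads $\|g_s\|_2^2\le (1-\varepsilon)^2(1+\varepsilon')\|g_s\|_2^2 + C_{\varepsilon'}\mu^{-1}\sum_\lambda|g_s(\lambda)|^2$; choosing $\varepsilon'$ small, the first term absorbs and we get $\|g_s\|_2^2\le C_\varepsilon\mu^{-1}\sum_\lambda|g_s(\lambda)|^2$. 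This is \emph{almost} what we want, but with $g_s$ in the sum rather than $f$; this is the point where some care (and the true content of the lemma) lies, since $g_s(\lambda)\ne f(\lambda)$ in general. The resolution in \cite{kns} is to instead apply Wirtinger to $f$ itself but with the observation that the gap lengths $(1-\varepsilon)(2\mu)^{-1}$ are calibrated exactly to the band $[-\mu,\mu]$: one applies Wirtinger to $f$ and uses $\int|f'|^2 = 4\pi^2\int|y|^2|\widehat f|^2$ together with the elementary bound $|y|^2\le s+ (|y|^2-s)_+$ to split $\int|y|^2|\widehat f|^2 \le s\|f\|_2^2 + \int(|y|^2-s)|\widehat{f_s}|^2\,dy$, then feed this back; the resulting recursive inequality in the single quantity $\|f\|_2^2-\|f_s\|_2^2$ closes because the coefficient of $\|f\|_2^2$ on the right is $\pi^{-2}(1-\varepsilon)^2(2\mu)^{-2}(1+\varepsilon')\cdot 4\pi^2 s\le (1-\varepsilon)^2(1+\varepsilon')<1$ for $s\le\mu^2$.

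\textbf{Main obstacle.} The genuinely delicate step is this last one: getting a \emph{clean} recursion whose contraction constant is uniformly $<1$ across all truncation levels $s\in[0,\mu^2]$, while keeping the boundary term in terms of $\sum_{\lambda\in\tilde\Lambda}|f(\lambda)|^2$ (not $|g_s(\lambda)|^2$ or $|f_s(\lambda)|^2$), and then correctly bookkeeping the $\Phi'$-weighted integration so that the final constant in front of $\sum|f(\lambda)|^2$ comes out as $C_\varepsilon\,\mu\,\Phi'(\mu^2)$ and not something like $\mu^2\Phi'(\mu^2)$ or $\Phi(\mu^2)$. Everything else — splitting $\R\setminus\tilde\Lambda$ into gaps, summing Wirtinger, the layer-cake representation of $\Phi$, Plancherel — is routine. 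I would therefore structure the write-up as: (1) linear case via summed Wirtinger; (2) the recursion closing the band-limited estimate at a fixed level $s$; (3) layer-cake integration in $\Phi$ to assemble \eqref{eq:conclusion-wirtinger}, being explicit about the $\varepsilon$-dependence of constants so that Proposition \ref{prop:gaussian} can later be applied with the optimization over $\mu$ and $\Phi$.
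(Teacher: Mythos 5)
Your first step --- summing Lemma \ref{lemma:wirtinger} over the gap intervals of $\R\setminus\tilde\Lambda$ and applying Plancherel to obtain $\int_\R|f|^2\le\mu^{-2}\int_\R|y|^2|\widehat f|^2+C_\varepsilon\mu^{-1}\sum_{\lambda}|f(\lambda)|^2$ --- is exactly the paper's first step. The gap lies in your passage to general convex $\Phi$. Your layer-cake reduction requires the per-level inequality $\|f\|_2^2\le\|f_s\|_2^2+C_\varepsilon\mu^{-1}\sum_\lambda|f(\lambda)|^2$ for \emph{every} $s\in[0,\mu^2]$, i.e. $\int_{|y|^2<s}|\widehat f|^2\le C_\varepsilon\mu^{-1}\sum_\lambda|f(\lambda)|^2$. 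That statement is false: take $f$ nonzero and supported in a single gap interval, so that $f$ vanishes on $\tilde\Lambda$ and the right-hand side is zero; the inequality would then force $\widehat f\equiv 0$ on $(-\mu,\mu)$, which is impossible for a nonzero compactly supported $f$ (whose Fourier transform is entire). The proposition is only an \emph{averaged} statement ("the mean frequency of $f$ is at least $\mu$"), not the assertion that $f$ has no frequency content below $\mu$, so no per-level version can hold. The recursion you sketch does not rescue this: it yields $\|g_s\|_2^2\lesssim_\varepsilon\mu^{-2}\int(|y|^2-s)_+|\widehat f|^2+\mu^{-1}\sum_\lambda|f(\lambda)|^2$, whose first term is not controlled by $\|f_s\|_2^2$; and feeding it into the $\Phi'$-integration produces a constant strictly larger than $1$ in front of $\int\Phi(|y|^2)|\widehat f|^2$, whereas the constant $1$ there is what the absorption argument in Proposition \ref{prop:gaussian} needs.

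The paper's actual argument is much shorter and bypasses all of this. Using Plancherel ($\int|f|^2=\int|\widehat f|^2$), rearrange the linear inequality into the balanced form
\[
\int_{|y|\ge\mu}\Bigl(\tfrac{|y|^2}{\mu^2}-1\Bigr)|\widehat f(y)|^2\,dy+C_\varepsilon\mu^{-1}\sum_{\lambda\in\tilde\Lambda}|f(\lambda)|^2\ \ge\ \int_{|y|\le\mu}\Bigl(1-\tfrac{|y|^2}{\mu^2}\Bigr)|\widehat f(y)|^2\,dy,
\]
multiply through by $\mu^2\Phi'(\mu^2)$ (this is where the factor $\mu\,\Phi'(\mu^2)$ in front of the node sum comes from, which resolves your normalization worry), and apply the tangent-line inequality $\Phi(t)\ge\Phi(\mu^2)+\Phi'(\mu^2)(t-\mu^2)$: it enlarges the integrand on $|y|\ge\mu$ and shrinks it on $|y|\le\mu$, giving $\int_{|y|\ge\mu}(\Phi(y^2)-\Phi(\mu^2))|\widehat f|^2+C_\varepsilon\mu\Phi'(\mu^2)\sum_\lambda|f(\lambda)|^2\ge\int_{|y|\le\mu}(\Phi(\mu^2)-\Phi(y^2))|\widehat f|^2$, which is \eqref{eq:conclusion-wirtinger} after reordering and one more use of Plancherel. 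You should replace the layer-cake machinery with this one convexity step.
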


\begin{proof} By Lemma \ref{lemma:wirtinger}, we obtain that 
\[
\int_I |f(x)|^2 \, dx \le \pi^{-2} (2\mu)^{-2} \int_I |f'|^2 + (1+\varepsilon^{-1})(1-\varepsilon)(2\mu)^{-1} \left(|f(a)|^2 + |f(b)|^2\right),
\]
which when summed over $I$ an interval from $\R \setminus \tilde{\Lambda}$ yields, after Plancherel, 
\begin{equation}\label{eq:conseq-poinc}
\int_{\R} |f(x)|^2 \, dx \le \mu^{-2} \int_{\R}|y|^2 |\widehat{f}(y)|^2 \, dy + C_{\varepsilon} \mu^{-1} \sum_{\lambda \in \Lambda} |f(\lambda)|^2. 
\end{equation}
Since the inequalities derived so far hold true for \emph{any} $\varepsilon > 0,$ there is no issue in selecting such value. We then rewrite \eqref{eq:conseq-poinc} as 
$$
\begin{aligned}
& \int_{|y| \geq \mu} \left( \left( \frac{|y|}{\mu} \right)^2 - 1\right) |\widehat{f}(y)|^2 \, dy +C_{\varepsilon} (\mu)^{-1} \sum_{\lambda \in \Lambda}|f(\lambda)|^{2} \\
& \geq \int_{|y| \leq \mu}\left(-\left( \frac{|y|}{\mu}\right)^{2}+1\right)|\widehat{f}(y)|^{2} dy. 
\end{aligned}
$$

Multiplying the inequality above by $\Phi'(\mu^2)$, using that $\Phi(y^2) - \Phi(\mu^2) \ge \Phi'(\mu^2)(y^2 - \mu^2)$ for $|y| \ge \mu,$ and that the reverse inequality holds for $|y| \le \mu,$ we have  
\[
\int_{|y| \ge \mu}   \left( \Phi(y^2) - \Phi(\mu^2) \right) |\widehat{f}(y)|^2 \, dy + C_{\varepsilon} \mu \cdot \Phi'(\mu^2) \sum_{\lambda \in \Lambda}|f(\lambda)|^{2} 
\]
\[
\geq \int_{|y| \leq \mu}\left(\Phi(\mu^2) - \Phi(y^2)\right)|\widehat{f}(y)|^{2} dy. 
\]
Upon reordering, this shows \eqref{eq:conclusion-wirtinger}, as desired. 
\end{proof}
We next note the following estimate on the points of $\Lambda$ and $\Gamma$, which will be useful throughout the manuscript. 

\begin{lemma}\label{lemma:decay-lambda} Let $\Lambda,\Gamma$ be as in the statement of Theorem \ref{thm:positive}, and let $\varepsilon>0$ be arbitrary. Then we have that 
\[
|\gamma_i| + |\lambda_i| \le \sqrt{\frac{2+\varepsilon}{C_{\alpha}}} \sqrt{i},
\]
for all $i \in \Z$ sufficiently large. 
\end{lemma}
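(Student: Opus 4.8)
The plan is to reduce the statement to an elementary telescoping (discrete Gr\"onwall--type) estimate on the squares $\lambda_i^2$ and $\gamma_i^2$, run separately and symmetrically for the two sequences and for the two ends $i \to +\infty$ and $i \to -\infty$. I will describe the argument for $\Lambda$ as $i \to +\infty$. We may take the enumeration of $\Lambda$ to be increasing; since $\Lambda$ is discrete and bi-infinite, an increasing sequence in it cannot have an accumulation point, so $\lambda_i \to +\infty$, and in particular $\lambda_i > 0$ for $i$ large.

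Fix $M$ with $\limsup_{i \to +\infty}(\lambda_{i+1}-\lambda_i)\lambda_i < M < 1/C_\alpha$, which is possible by \eqref{eq:sets}. Choose $i_0$ so that $\lambda_{i_0} > 0$ and $(\lambda_{i+1}-\lambda_i)\lambda_i \le M$ for every $i \ge i_0$. Expanding the square gives
\[
\lambda_{i+1}^2 = \lambda_i^2 + 2(\lambda_{i+1}-\lambda_i)\lambda_i + (\lambda_{i+1}-\lambda_i)^2 \le \lambda_i^2 + 2M + \frac{M^2}{\lambda_i^2},
\]
where in the last step I used $\lambda_{i+1}-\lambda_i \le M/\lambda_i$. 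Summing from $i_0$ to $N-1$ yields
\[
\lambda_N^2 \le \lambda_{i_0}^2 + 2M(N - i_0) + M^2 \sum_{i=i_0}^{N-1} \frac{1}{\lambda_i^2}.
\]
Since $\lambda_i \to +\infty$ we have $\lambda_i^{-2} \to 0$, hence $\frac{1}{N}\sum_{i=i_0}^{N-1}\lambda_i^{-2} \to 0$; dividing the last display by $N$ and letting $N \to \infty$ gives $\limsup_{N}\lambda_N^2/N \le 2M$. As $M$ may be taken arbitrarily close to $\limsup_{i}(\lambda_{i+1}-\lambda_i)\lambda_i$, we obtain $\limsup_{N}\lambda_N^2/N \le 2/C_\alpha$, and therefore $|\lambda_N| \le \sqrt{(2+\varepsilon)/C_\alpha}\,\sqrt{N}$ for all $N$ large. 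For $i \to -\infty$ the same computation applies, with the sign bookkeeping giving $\lambda_i^2 - \lambda_{i+1}^2 = 2(\lambda_{i+1}-\lambda_i)|\lambda_i| - (\lambda_{i+1}-\lambda_i)^2 \le 2M$, which telescopes and is in fact slightly easier (the quadratic correction now has the favourable sign). Running the identical argument for $\Gamma$ gives $|\gamma_i| \le \sqrt{(2+\varepsilon)/C_\alpha}\,\sqrt{|i|}$ for $|i|$ large; combining the two estimates yields the asserted bound, up to the harmless absolute constant produced by adding the two sequences, which is absorbed into the choice of $C_\alpha$ in Theorem~\ref{thm:positive} (or else one reads the inequality termwise).

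I do not expect a genuine obstacle here: the estimate is soft. The two points deserving a word of care are the passage from the one-sided $\limsup$ hypothesis to an honest pointwise bound valid only for $|i|$ large — which is exactly what the statement claims — and the control of the quadratic correction $(\lambda_{i+1}-\lambda_i)^2$, which is the only term that could a priori contaminate the leading constant; it is precisely the divergence $\lambda_i \to \infty$ that forces $\sum_i \lambda_i^{-2} = o(N)$ and hence preserves the sharp constant $2/C_\alpha$.
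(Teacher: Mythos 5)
Your proof is correct and follows essentially the same route as the paper's: both arguments telescope the increments $\lambda_{i+1}^2-\lambda_i^2$, bound each increment by roughly $2M<2/C_\alpha$ using the gap hypothesis, and observe that the correction terms are negligible (the paper via the ratio bound $|\lambda_{i+1}/\lambda_i|<1+\varepsilon^2$, you via the Ces\`aro estimate $\sum_i \lambda_i^{-2}=o(N)$ — these are interchangeable). Your remarks on the two-sided limsup and on reading the final inequality termwise match how the lemma is actually proved and used in the paper.
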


\begin{proof} Fix $\varepsilon > 0.$ For $i$ sufficiently large, we have that $|\lambda_i| > 1/\varepsilon$ and $|\lambda_{i+1} - \lambda_i| < \varepsilon,$ which implies that $|\lambda_{i+1}/\lambda_i| <1+ \varepsilon^2$ for all such $i \in \Z$. By the hypotheses of Theorem \ref{thm:positive}, we have that 
\[
|\lambda_{i+1}^2- \lambda_i^2| = 2 \left| \int_{\lambda_i}^{\lambda_{i+1}} t^2 \, dt \right| \le 2(1+\varepsilon^2)^2 |\lambda_{i+1}-\lambda_i| |\lambda_i|^2 < \frac{2+\varepsilon}{C_{\alpha}}, 
\]
with $\varepsilon > 0$ as small as one wants, if one makes $i \in \N$ larger. Telescoping then yields that 
\[
|\lambda_i|^2 \le \frac{2+\varepsilon}{C_{\alpha}} i,
\]
finishing our claim for $\Lambda$. The conclusion for $\Gamma$ is entirely analogous. 
\end{proof}

We are now ready to prove Proposition \ref{prop:gaussian}. The first step in that direction is, first and foremost, showing that each function $f \in H^1 \cap \mathcal{F}(H^1)$ satisfying the conditions of Proposition \ref{prop:gaussian} is actually smooth. We first need the following result about the sets $\Lambda,\Gamma$. We refer the reader to \cite[Claim~1]{kns} for a proof. 

\begin{lemma}\label{lemma:sets-lower} Let $\Lambda,\Gamma$ be as in Proposition \ref{prop:gaussian}. Then there are $\delta > 0$ and $\{\lambda_i'\}_i = \Lambda' \subset \Lambda, \, \{\gamma_i'\}_i = \Gamma' \subset \Gamma$ such that 
\begin{align}
\label{eq:upp-low-1} \frac{1}{C_{\alpha}} &> \limsup_{n \to \pm \infty} |\lambda_{i+1}' - \lambda_i'| |\lambda_i'| \ge \liminf_{n\to \pm \infty}|\lambda_{i+1}' - \lambda_i'||\lambda_i'| \ge \delta, \\ 
\label{eq:upp-low-2}  \frac{1}{C_{\alpha}} &> \limsup_{n \to \pm \infty} |\gamma_{i+1}' - \gamma_i'| |\gamma_i'| \ge \liminf_{n\to \pm \infty}|\gamma_{i+1}' - \gamma_i'||\gamma_i'| \ge \delta. 
\end{align}
\end{lemma}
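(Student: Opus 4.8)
The plan is to extract a sub-sequence of $\Lambda$ (and likewise of $\Gamma$) whose consecutive gaps, weighted by position, are not only bounded above by the given density assumption but also bounded \emph{below} by a positive constant. The reason one wants such a lower bound is that in the Poincaré–Wirtinger machinery of Proposition \ref{prop:convex} the error term $C_\varepsilon \mu \, \Phi'(\mu^2) \sum_{\lambda \in \tilde\Lambda} |f(\lambda)|^2$ must be controlled, and a sum over a set that is too dense would be unacceptably large; thinning the set to a genuinely separated subsequence (in the weighted sense $|\lambda'_{i+1}-\lambda'_i||\lambda'_i| \ge \delta$) keeps the cardinality of nodes in a window $[R, 2R]$ comparable to $R^2$ rather than possibly much larger, so the tail sums stay summable against the Gaussian. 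So the statement is really a purely combinatorial/number-theoretic lemma about discrete subsets of $\R$.

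First I would fix one of the two sets, say $\Lambda = \{\lambda_i\}_{i\in\Z}$, and work separately near $+\infty$ and near $-\infty$; by symmetry it suffices to treat $+\infty$. Write $\beta := \limsup_{i\to+\infty} |\lambda_{i+1}-\lambda_i||\lambda_i| < 1/C_\alpha$. The idea is a greedy selection: starting from some large $\lambda_{i_0}$, having chosen $\lambda'_k = \lambda_{i_k}$, let $i_{k+1}$ be the \emph{smallest} index $j > i_k$ such that $|\lambda_j - \lambda_{i_k}|\,|\lambda_{i_k}| \ge \delta$, where $\delta$ is a small positive constant to be fixed (e.g. $\delta = \tfrac12(1/C_\alpha - \beta)$, or simply any $\delta$ smaller than the lim sup and than $1/C_\alpha - \beta$). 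Two things must be checked. \textbf{(a)} Such a $j$ always exists: since $\Lambda$ is discrete with no finite accumulation point and $\lambda_{i_k}$ is large and positive, the points $\lambda_j$ for $j>i_k$ march off to $+\infty$, so $|\lambda_j - \lambda_{i_k}|$ eventually exceeds $\delta/|\lambda_{i_k}|$. \textbf{(b)} The gap is not too large: because $i_{k+1}$ was chosen \emph{minimal}, the index $i_{k+1}-1$ failed the test, i.e. $|\lambda_{i_{k+1}-1} - \lambda_{i_k}|\,|\lambda_{i_k}| < \delta$; hence $\lambda'_{k+1} = \lambda_{i_{k+1}}$ differs from $\lambda_{i_k}$ by at most (this near-miss) plus one more original gap $|\lambda_{i_{k+1}} - \lambda_{i_{k+1}-1}|$, whose weighted size is $\le \beta + o(1)$ by definition of the limsup. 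Combining, $|\lambda'_{k+1} - \lambda'_k|\,|\lambda'_k| \le \delta + \beta + o(1) < 1/C_\alpha$ for $k$ large, after also noting $|\lambda'_k| \le |\lambda_{i_{k+1}-1}|$ up to a negligible multiplicative factor $1+o(1)$ (here one uses, exactly as in Lemma \ref{lemma:decay-lambda}, that consecutive large points of $\Lambda$ have ratio tending to $1$, so replacing $|\lambda'_k|$ by intermediate $|\lambda_j|$'s costs only $1+o(1)$). This gives simultaneously the lower bound $\ge \delta$ by construction and the upper bound $< 1/C_\alpha$ (with, in the limit, limsup $\le \delta + \beta$, which one can even push down to something only slightly above $\delta$ by choosing $\delta$ adapted). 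For the genuine inequalities $\limsup \ge \liminf \ge \delta$ with strict $1/C_\alpha >$ on the left, note $\liminf \ge \delta$ is immediate and $\limsup < 1/C_\alpha$ follows from the bound just derived once the $o(1)$ terms are absorbed for $k$ large; discarding finitely many initial indices makes it hold for all $i$.

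Then repeat the identical construction for $\Gamma$ to get $\Gamma'$, and take the \emph{same} $\delta$ (replace $\delta$ by the minimum of the two $\delta$'s produced), so that \eqref{eq:upp-low-1} and \eqref{eq:upp-low-2} hold with a common $\delta>0$. I expect the only mildly delicate point to be the bookkeeping in step (b): one must be careful that the "near-miss" estimate $|\lambda_{i_{k+1}-1}-\lambda_{i_k}|\,|\lambda_{i_k}|<\delta$ is in terms of $|\lambda_{i_k}|$, whereas the limsup hypothesis on the extra gap $|\lambda_{i_{k+1}}-\lambda_{i_{k+1}-1}|$ is weighted by $|\lambda_{i_{k+1}-1}|$, and the final conclusion wants weight $|\lambda'_k| = |\lambda_{i_k}|$; all of these differ only by factors $1+o(1)$ as $k\to\infty$ by the ratio-to-$1$ observation, but this interchange of weights is where a careless argument could go wrong. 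Everything else is routine, and the result is really just a cleaner restatement (with the missing lower density) of \cite[Claim~1]{kns}, so in the write-up it is legitimate to be brief and cite that source.
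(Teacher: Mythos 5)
Your greedy thinning argument is correct; the paper gives no proof of this lemma at all (it defers to \cite[Claim~1]{kns}), and your construction is essentially the standard one behind that claim: select $i_{k+1}$ minimal with $|\lambda_{i_{k+1}}-\lambda_{i_k}||\lambda_{i_k}|\ge\delta$, get the lower bound for free, and bound the overshoot by the near-miss $\delta$ plus one original gap, yielding $\limsup\le\delta+\beta<1/C_\alpha$ for $\delta$ small. The bookkeeping point you flag is in fact harmless on the positive branch, since there $|\lambda_{i_k}|\le|\lambda_{i_{k+1}-1}|$ outright, so the extra-gap term is bounded by $\beta+o(1)$ without even invoking the ratio-to-$1$ observation.
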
 

We hence replace $\Lambda, \Gamma$ by the sets $\Lambda', \Gamma'$, and note the following consequence of \cite{kns}: 

\begin{proposition}[Corollary~1 in \cite{kns}]\label{prop:schwartz-dec} Under the same hypotheses on $\Lambda, \Gamma$ as in Proposition \ref{prop:gaussian}, we have that $f \in H^1 \cap \mathcal{F}(H^1)$ if, and only if, for each $r>0$, we have
\[
|f(\lambda_j)| = O(|\lambda_j|^{-r}), \, \forall \, j \in \Z, \, |\widehat{f}(\gamma_i)| = O(|\gamma_i|^{-r}), \, \forall \, i \in \Z. 
\]
\end{proposition}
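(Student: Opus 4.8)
The statement to prove is Proposition~\ref{prop:schwartz-dec}: under the density hypotheses on $\Lambda,\Gamma$, membership of $f$ in $H^1 \cap \mathcal{F}(H^1)$ is equivalent to rapid polynomial decay of $f$ at the nodes of $\Lambda$ and of $\widehat{f}$ at the nodes of $\Gamma$. Since the paper attributes this to Corollary~1 in \cite{kns}, the natural route is to reproduce the argument from there, adapted to the present normalization.

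The plan is as follows. The easy direction is that $H^1 \cap \mathcal{F}(H^1)$ implies the pointwise decay: Sobolev embedding gives $f,\widehat{f} \in C^0 \cap L^\infty$, and more importantly the iterated use of the hypothesis will upgrade this. The substantive direction is the converse, and it proceeds by a bootstrap built on Proposition~\ref{prop:convex}. First I would take $\Phi(t) = (1+t)^k$ for a positive integer $k$; convexity and monotonicity on $[0,\infty)$ are clear. Applying \eqref{eq:conclusion-wirtinger} with $\tilde\Lambda$ a sufficiently dense subset of $\Lambda$ (using Lemma~\ref{lemma:sets-lower} to ensure the complementary intervals are short enough, with $\mu$ comparable to a scale one chooses), one gets
\[
(1+\mu^2)^k \int_{\R} |f|^2 \le \int_{\R} (1+|y|^2)^k |\widehat f(y)|^2\,dy + C_\varepsilon\, k\, \mu (1+\mu^2)^{k-1} \sum_{\lambda \in \tilde\Lambda} |f(\lambda)|^2,
\]
and symmetrically with $f$ and $\widehat f$ interchanged and $\Lambda$ replaced by $\Gamma$. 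The right-hand side involves the $H^k$ norm of $f$ (resp. $\widehat f$) plus a sum over the discrete set; by Lemma~\ref{lemma:decay-lambda} the points $\lambda_j$ grow like $\sqrt{j}$, so if $|f(\lambda_j)| = O(|\lambda_j|^{-r})$ for every $r$, that discrete sum converges for every $k$. The point of the argument is then to run the two inequalities against each other: bounding $\|f\|_{H^k}$ in terms of $\|\widehat f\|_{H^{k'}}$-type quantities and the node sums, and iterating, one closes a loop showing $f \in H^k$ and $\widehat f \in H^k$ for all $k$, hence $f \in \mathcal{S}$, which certainly gives $f \in H^1 \cap \mathcal{F}(H^1)$. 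One has to be a little careful that the constants produced at each step of the iteration do not blow up faster than the gain; this is exactly where the strict inequality $< 1/2$ (equivalently $\mu$ can be taken strictly larger than the reciprocal spacing scale, giving a genuine contraction factor $<1$) is used.

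More concretely, I would phrase the iteration as a two-sided estimate: set $a_k = \int_\R (1+|y|^2)^k |\widehat f(y)|^2\,dy$ and $b_k = \int_\R (1+|y|^2)^k |f(y)|^2\,dy$ (so $a_k < \infty$ is exactly $f \in H^k$). Proposition~\ref{prop:convex} with $\Phi(t)=(1+t)^k$ at a well-chosen scale $\mu = \mu_k$ gives, schematically, $(1+\mu_k^2)^k b_0 \le a_k + (\text{node sum})$, but one wants to control $b_k$, not $(1+\mu_k^2)^k b_0$. To reach $b_k$ one partitions $\R$ dyadically in $|y|$, applies the inequality on each annulus with $\mu$ of the order of the annulus scale (the hypotheses are scale-invariant in the required sense once one passes to $\Lambda', \Gamma'$ via Lemma~\ref{lemma:sets-lower}), and sums a geometric-type series in the annulus index; the strict density gap furnishes the summability. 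This yields $b_k \lesssim a_k + S_k$ where $S_k = \sum_{j}(1+|\lambda_j|^2)^{k}|f(\lambda_j)|^2 < \infty$ by the hypothesis and Lemma~\ref{lemma:decay-lambda}; symmetrically $a_k \lesssim b_k + T_k$ with $T_k$ the analogous sum over $\Gamma$, which is finite by hypothesis. But $b_k \lesssim a_k + S_k$ and $a_k \lesssim b_k + T_k$ alone are circular — one needs the scale factor. The correct form coming out of Proposition~\ref{prop:convex} is $a_{k} \le (\text{small})\, a_{k+1}$-type bounds at unit scale together with a lower-order term; iterating finitely many times from $a_0 = \|f\|_2^2 < \infty$ (which follows since $f \in L^2$, guaranteed once we know $f$ is at least tempered with the given node decay — this needs a short separate argument, e.g.\ first showing $f \in L^2$ directly) one lands on $a_k < \infty$ for all $k$. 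The cleanest exposition is probably: show $f \in L^2$ and $\widehat f \in L^2$ first (this is where one genuinely needs $f \in H^1 \cap \mathcal F(H^1)$ as a standing hypothesis — re-reading the statement, $f \in H^1 \cap \mathcal F(H^1)$ is assumed on both sides, so $f,\widehat f \in L^2$ is free, and $a_0, b_0 < \infty$), then bootstrap $a_k, b_k$ upward for all $k$.

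The main obstacle I expect is the bookkeeping in the bootstrap: one must make the application of Proposition~\ref{prop:convex} genuinely gain a power of $(1+|y|^2)$ per step while keeping the accumulated constants and the node sums under control, and the honest way to do that is the dyadic-annulus decomposition together with the strict spacing inequality, which is the analytic heart of \cite{kns}. Everything else — Sobolev embedding for the trivial direction, the passage to $\Lambda',\Gamma'$, the convergence of $S_k, T_k$ via Lemma~\ref{lemma:decay-lambda} — is routine. Since the paper explicitly cites \cite[Corollary~1]{kns} and does not reproduce the proof, I would at most sketch this bootstrap and refer to \cite{kns} for the detailed estimates, noting only that the constants $C_\alpha$ in the present normalization differ from theirs by harmless factors absorbed into $\varepsilon$.
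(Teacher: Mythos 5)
The paper offers no proof of Proposition \ref{prop:schwartz-dec}: it is imported verbatim, with attribution, as Corollary~1 of \cite{kns}, so your decision to sketch the mechanism and defer to \cite{kns} for the detailed estimates is consistent with what the authors themselves do. Your sketch of the substantive direction (rapid node decay $\Rightarrow$ all weighted $L^2$ moments finite $\Rightarrow$ Schwartz) correctly identifies the engine, namely the Wirtinger/convexity inequality of Proposition \ref{prop:convex} applied at well-chosen scales, with the discrete sums controlled via Lemma \ref{lemma:decay-lambda}; this is indeed the route of \cite{kns}, and it is the same machinery the paper runs quantitatively in the proof of Proposition \ref{prop:gaussian}.

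Two points need repair. First, your ``easy direction'' is not easy --- it is false as literally printed. Membership in $H^1\cap\mathcal{F}(H^1)$ does not imply $|f(\lambda_j)|=O(|\lambda_j|^{-r})$ for every $r$: take $f(x)=(1+x^2)^{-1}$, which lies in $H^1\cap\mathcal{F}(H^1)$ but decays only quadratically at the nodes. The statement is a mis-transcription of \cite[Corollary~1]{kns}, whose actual content is: for $f\in H^1\cap\mathcal{F}(H^1)$ (standing hypothesis), one has $f\in\mathcal{S}(\R)$ if and only if the node decay holds --- which is also exactly what Proposition \ref{prop:Schwartz} extracts from it. Your parenthetical ``re-reading the statement'' shows you sensed the problem, but a Sobolev-embedding remark neither proves the printed ``only if'' nor resolves the discrepancy; you should state and prove the corrected equivalence. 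Second, your bootstrap as formulated is logically backwards: from $a_0<\infty$ and bounds of the shape $a_k\le(\text{small})\,a_{k+1}+(\text{lower order})$ one cannot conclude $a_{k+1}<\infty$. What the machinery actually yields is the pair $b_p\le\tfrac12 a_p+(\text{lower order})+S_p$ and $a_p\le\tfrac12 b_p+(\text{lower order})+T_p$, and absorbing requires knowing $a_p,b_p<\infty$ \emph{a priori}; in \cite{kns} this is secured by running the inequality on the compactly supported localizations $f\cdot F_\xi$ (all of whose moments are trivially finite) and by an induction on the moment order before passing to the absorption step. Your sketch should acknowledge that step rather than present the absorption as automatic.
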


That result directly implies the following:

\begin{proposition}\label{prop:Schwartz} Under the hypotheses of Proposition \ref{prop:gaussian}, we have $f \in \mathcal{S}(\R).$
\end{proposition}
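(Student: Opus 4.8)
The plan is to reduce the statement to Sobolev regularity at every order. Recall that $f\in\mathcal S(\R)$ if and only if $f\in H^s\cap\mathcal F(H^s)$ for every $s>0$: with the conventions fixed above, $\bigcap_{s>0}H^s$ forces all derivatives of $f$ to lie in $L^2$, $\bigcap_{s>0}\mathcal F(H^s)$ forces $f$ itself to decay faster than any polynomial in $L^2$, and together these two are a standard characterization of the Schwartz class. Now the Gaussian hypotheses of Proposition~\ref{prop:gaussian} yield, in particular, that for every $r>0$ one has $|f(\lambda_j)|=O(|\lambda_j|^{-r})$ and $|\widehat f(\gamma_i)|=O(|\gamma_i|^{-r})$ — far more than what Proposition~\ref{prop:schwartz-dec} requires in order to place $f$ in $H^1\cap\mathcal F(H^1)$. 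So the point is to upgrade this \emph{single} conclusion to $f\in H^s\cap\mathcal F(H^s)$ for \emph{all} $s$, by observing that the argument behind Proposition~\ref{prop:schwartz-dec} is quantitative: node decay at polynomial rate $r$ produces global $L^2$-Sobolev regularity of an order $\rho(r)$ which tends to $\infty$ with $r$.

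Concretely, I would first pass to the refined sets $\Lambda'\subset\Lambda$, $\Gamma'\subset\Gamma$ of Lemma~\ref{lemma:sets-lower}, which are uniformly separated at the $\sqrt{n}$ scale, and then run the Poincar\'e--Wirtinger inequality of Lemma~\ref{lemma:wirtinger} over the gaps of $\R\setminus\tilde\Lambda$ — as in Proposition~\ref{prop:convex} — but now iterated and fed with high-degree convex weights $\Phi(t)\sim t^N$, together with the reverse companion inequality (the other half of \cite[Lemma~3]{kns}). This closes a system relating $\int_\R\Phi(|x|^2)|f(x)|^2\,dx$, $\int_\R\Phi(|\xi|^2)|\widehat f(\xi)|^2\,d\xi$, and the node sums $\sum_{\lambda\in\Lambda'}|f(\lambda)|^2$, $\sum_{\gamma\in\Gamma'}|\widehat f(\gamma)|^2$. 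Using Lemma~\ref{lemma:decay-lambda} to control $\Phi'(|\lambda_i'|^2)$ against the super-polynomially small values $|f(\lambda_i')|^2$, the node contributions become negligible, and one reads off $\int_\R(1+|x|^2)^{N}\bigl(|f(x)|^2+|\widehat f(x)|^2\bigr)\,dx<\infty$ for every $N$. Put differently: for each $r$ the bound $|f(\lambda_j)|\le B_r|\lambda_j|^{-r}$ holds with $B_r<\infty$, so feeding $r$ into the proof of Proposition~\ref{prop:schwartz-dec} gives $f\in H^{\rho(r)}\cap\mathcal F(H^{\rho(r)})$ with $\rho(r)\to\infty$; letting $r\to\infty$ gives $f\in\mathcal S(\R)$.

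The main obstacle is exactly this quantitative bookkeeping. Proposition~\ref{prop:schwartz-dec} is stated qualitatively, so one must either re-run its proof while tracking the dependence of the output Sobolev order on the input decay rate, or reassemble the whole estimate directly from Lemma~\ref{lemma:wirtinger}, Proposition~\ref{prop:convex} and its reverse. A secondary, routine technical point is that Proposition~\ref{prop:convex} and the Wirtinger-type estimates are phrased for Schwartz $f$, whereas here they must be applied to a function known a priori only to lie in $H^1\cap\mathcal F(H^1)$; this is handled by a standard cutoff-and-mollify approximation, where the Sobolev embedding of $H^1(\R)$ into bounded continuous functions makes the node sums pass to the limit, and $L^2$-convergence (and weighted-$L^2$-convergence at the regularity level already attained) handles the integral terms. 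Neither difficulty is conceptually serious, but together they are why Proposition~\ref{prop:Schwartz}, although ``directly'' implied by Proposition~\ref{prop:schwartz-dec}, still needs a short argument rather than a one-line citation.
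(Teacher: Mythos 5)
Your argument is sound in outline, but it takes the long way around compared with the paper, which disposes of Proposition~\ref{prop:Schwartz} in one line: the Gaussian node bounds trivially give $|f(\lambda_j)|=O(|\lambda_j|^{-r})$ and $|\widehat f(\gamma_i)|=O(|\gamma_i|^{-r})$ for every $r$, and Proposition~\ref{prop:schwartz-dec} (Corollary~1 of \cite{kns}) is then cited to conclude $f\in\mathcal S(\R)$ directly. Your impression that Proposition~\ref{prop:schwartz-dec} only delivers membership in $H^1\cap\mathcal F(H^1)$, and hence must be ``upgraded'' order by order, comes from taking the printed statement at face value; as written it is an obvious misprint (it asserts an equivalence whose two sides both follow from the standing hypothesis $f\in H^1\cap\mathcal F(H^1)$), and the intended conclusion of the cited corollary is precisely $f\in\mathcal S(\R)$. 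What you then propose --- running the Poincar\'e--Wirtinger inequality over the gaps with polynomial convex weights, closing the system between the moments of $f$ and $\widehat f$, absorbing the node sums via the super-polynomial sample decay, and handling the a priori regularity by approximation --- is essentially a reconstruction of the proof of that corollary in \cite{kns}, and your two flagged difficulties (tracking the Sobolev order produced by a given sample decay rate, and the bootstrap's need for a priori finiteness of the moments entering each step) are exactly where the care is required; the induction closes because each new moment of $f$ is controlled by finitely many previously established moments of $\widehat f$ and vice versa, starting from $\int|\xi|^2|\widehat f|^2<\infty$ and $\int|x|^2|f|^2<\infty$, which $H^1\cap\mathcal F(H^1)$ provides. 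So your route buys a self-contained argument at the cost of duplicating \cite{kns}; the paper's route buys brevity at the cost of leaning entirely on the citation (and of the typo in its transcription).
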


We are now finally able to finish the proof of Proposition \ref{prop:gaussian}. The argument that follows is a quantified version of \cite[Eq.~(6.1.1)]{kns}. 

\begin{proof}[Proof of Proposition \ref{prop:gaussian}]  We work under the same hypotheses on the sets of the proof of Proposition \ref{prop:Schwartz}. Fix then $u >0$. And consider the function $F$ defined by the $(k+1)$-fold convolution 

\begin{equation}\label{eq:F-special}
F(t) = 1_{[-u,u]} * \left(\frac{k}{u} 1_{[-\frac{u}{2k},\frac{u}{2k}]}\right) * \cdots * \left(\frac{k}{u} 1_{[-\frac{u}{2k},\frac{u}{2k}]}\right) (t).
\end{equation}
Consider $F_{\xi}(t) = F(t-\xi),$ as in \cite{kns}. We then use the Proposition \ref{prop:gaussian} for the function $f \cdot F_{\xi}.$ This function satisfies the conditions of such a proposition with $\mu = \sigma \left( |\xi| - \frac{3}{2} u\right),$ and we obtain 

\begin{align}\label{eq:prelim-bound-1}
 \int_{\mathbb{R}}\left(|\xi|-\frac{3}{2} u\right)^{2p}|f|^{2}\left|F_{\xi}\right|^{2} \leqslant \sigma^{-2 p} \int_{\mathbb{R}}|y|^{2p}\left|\widehat{f} * \widehat{F_{\xi}}\right|^{2} 
 \end{align}
 \begin{align*} 
+ p \left( |\xi| - \frac{3}{2} u\right)^{2p - 1} \sigma^{-1} \sum_{\lambda \in \Lambda} |f(\lambda)|^2 |F_{\xi}(\lambda)|^2. 
\end{align*}

We now integrate the expression above over $|\xi| \geqslant \frac{3}{2} u+X_{0}$: the left-hand side and the first term on the right-hand side are bounded as in \cite{kns}. Effectively, for the term on the left-hand side, by the fact that if $y \in \operatorname{supp}\left(F_{\xi}\right) \Rightarrow|y-\xi| \leqslant \frac{3 u}{2} \Rightarrow|\xi|-\frac{3 u}{2} \ge |y|-3u$, if we fix $K>1$ and $X_0$ is sufficiently large, we have 
\[
\int_{|\xi| \frac{3}{2} u + X_0} \int_{\mathbb{R}}\left(|\xi|-\frac{3}{2} u\right)^{2p}|f|^{2}\left|F_{\xi}\right|^{2} \, dx \, d\xi \ge u \cdot \left(\frac{K-3}{K}\right)^{2p} \int_{|x|\ge K \cdot u} |x|^{2p} |f(x)|^2 \, dx. 
\]
For the first term on the right-hand side, we have trivially by Plancherel that it is bounded by 
\[
\int_{\R} \int_{\R} |\widehat{f}(\eta)|^2 |\widehat{F}(\eta)|^2 |\zeta + \eta|^2 \, d \zeta \, d \eta,
\]
which is, again by repeating the same argument as in \cite[Section~5.2]{kns}, bounded from above by 
\[
2 u \sigma^{-2p} \int_{\R} |\widehat{f}|^2 \left( |y| + \frac{k}{\pi u}\right)^{2p} \, dy,
\]
as long as the parameter $k \ge p.$ Finally, for the second term on the right-hand side of \eqref{eq:prelim-bound-1}, we first bound it by using that if $\lambda \in \text{supp}(F_\xi),$ then $|\lambda| \ge |\xi| - \frac{3}{2}u$. This shows that that factor is bounded by 
\[
C u \cdot \sum_{\lambda \in \Lambda} |\lambda|^{2p-1} |f(\lambda)|^2 
\]
Putting all estimates together, we have that, for $K>4,$ $ k \in (p,\pi \cdot p),$ and $u \geqslant X_{0}$,

\begin{align*}
& \left( \frac{K-3}{K}\right)^{2 p}\left(\int_{|x| \geqslant K \cdot u}|x|^{2 p}|f|^{2}\right) \\
& \leqslant \sigma^{-2p} \cdot \int_{\mathbb{R}}|\widehat{f}|^{2}\left(|y|+\frac{k}{\pi u}\right)^{2 p} d y + C \cdot p \sigma^{-1} \cdot \sum_{\lambda \in \Lambda} |\lambda|^{2p-1} |f(\lambda)|^2.
\end{align*}
This plainly implies that 

\begin{align*}
\int|x|^{2 p}|f|^{2} \leq \left(\frac{K-1}{K}\left(\frac{K}{K-3}\right) \sigma^{-1}\right)^{2 p} \cdot 2 \int_{\mathbb{R}} |\widehat{f}(y)|^{2}|y|^{2p} d y 
\end{align*}
\begin{align*} 
 +\int_{|x| \leq K \cdot u}|x|^{2 p} \cdot|f|^{2}+2 \int_{|y| \leqslant K \cdot \frac{k}{u}}|\widehat{f}|^{2}\left(|y|+\frac{k}{\pi u}\right)^{2 p} 
\end{align*}
\begin{align*} 
+ C \cdot p \sigma^{-1} \cdot \left( \frac{K}{K-3}\right)^{2p} \sum_{\lambda \in \Lambda} |\lambda|^{2p-1} |f(\lambda)|^2 
\end{align*}
\begin{align*}
& \leqslant \frac{1}{2} \int_{\R}|\widehat{f}(y)|^{2}|y|^{2 p} d y 
+(K u)^{2 p} \int|f|^{2}+2(k+1)^{2 p}\left(\frac{p}{u}\right)^{2 p} \int|f|^{2} \cr
& + C \cdot p \sigma^{-1} \cdot \left( \frac{K}{K-3}\right)^{2p} \sum_{\lambda \in \Lambda} |\lambda|^{2p-1} |f(\lambda)|^2.
\end{align*}

We now let $u=p^{\frac{1}{2}}$. Since $\sigma > 1$, if $K>4$ is made sufficiently large, we then get 

\begin{align*} 
\int_{\mathbb{R}}|x|^{2 p} |f|^{2} & \leqslant \frac{1}{2} \int_{\mathbb{R}}|y|^{2 p}|\widehat{f}|^{2}+C^{2 p} p^{ p} \int_{\mathbb{R}}|f|^{2} + C \cdot (1+\varepsilon)^p \sum_{\lambda \in \Lambda} |\lambda|^{2p-1} |f(\lambda)|^2,
\end{align*} 
where $\varepsilon>0$ is a fixed small constant. If we do the same for $\widehat{f}$, and use the same strategy as in \cite{kns}, we obtain 
\begin{align*}
\int_{\R} |y|^{2p} |\widehat{f}|^2 & \le \frac{1}{2} \int_{\R} |x|^{2p} |f|^2 + C^{2p} p^{p} \int_{\R} |f|^2 + C \cdot (1+\varepsilon)^p \sum_{\gamma \in \Gamma} |\gamma|^{2p - 1} |\widehat{f}(\gamma)|^2. 
\end{align*}
This implies that 
$$
\begin{aligned}
& \int_{\mathbb{R}}|x|^{2 p}|f(x)|^{2} d x \leqslant C^{p} p^{ p} \int_{\mathbb{R}}|f|^{2} + C \cdot (1+\varepsilon)^p \left( \sum_{\lambda \in \Lambda} |\lambda|^{2p - 1}|f(\lambda)|^2 + \sum_{\gamma \in \Gamma} |\gamma|^{2p - 1} |\widehat{f}(\gamma)|^2\right). 
\end{aligned}
$$

As $f$ and $\widehat{f}$ have Gaussian decay at the nodes of $\Lambda$ and $\Gamma$, we may use the following Riemann sum strategy to bound the sums on the right-hand side:

\begin{align}\label{eq:upper-bound-discrete}
\sum_{\lambda \in \Lambda}|\lambda|^{2p-1}|f(\lambda)|^{2} & \le \sum_{\lambda \in \Lambda} |\lambda|^{2p-1} e^{-2\alpha |\lambda|^2} \lesssim C_{\alpha} \sum_{i \in \Z} |\lambda_i|^{2p} |\lambda_{i+1} - \lambda_i| e^{-2\alpha|\lambda_i|^2} \cr 
 & \lesssim \int_{\R} |x|^{2p} e^{-2\alpha|x|^2} \, dx \lesssim \rho^p \Gamma(p+1),
\end{align}
for some $\rho>0$. Analogously, we also get that
\begin{align*}
\sum_{\gamma \in \Gamma} |\gamma|^{2p - 1} |\widehat{f}(\gamma)|^2 & \lesssim \rho^p \Gamma(p+1). 
\end{align*}
As a consequence of the argument above and Stirling's formula, we get that, for some $C>1$,
$$
\int_{\R} |\xi|^{2p} |\widehat{f}(\xi)|^2 \, d\xi + \int_{\mathbb{R}}|x|^{2 p}|f(x)|^{2} d x \leqslant C^{p} p^{p} \cdot \widetilde{C}_{f}.
$$
Optimising in $p > 0$, this shows that 

$$
\int_{\R} e^{\beta |\xi|^2} |\widehat{f}(\xi)|^2 \, d\xi + \int_{\mathbb{R}} e^{\beta |x|^{2}}|f(x)|^{2} d x \leqslant C_{f} \text {, for some } \beta >0. 
$$
 Hence, we have proved that Gaussian decay from the nodes propagates to the whole space, as desired, finishing thus the proof of Proposition \ref{prop:gaussian}. \end{proof}

 We are now able to finish the proof of Part (I) of Theorem \ref{thm:positive}. 

\begin{proof}[Proof of Theorem \ref{thm:positive}]
Suppose now that two functions $f,g$ satisfy the conditions of Theorem \ref{thm:positive}. Suppose then that, for two sets $\Lambda$ and $\Gamma$ as in the statement of that result, we have 
\[
|f(\lambda)| = |g(\lambda)|, \,\, |\widehat{f}(\gamma)| = | \widehat{g}(\gamma)|, \forall \lambda \in \Lambda, \, \forall \gamma \in \Gamma. 
\]
By Proposition \ref{prop:gaussian}, this directly implies Gaussian decay for $f$, both in space and frequency. Now, we invoke the following version of the Paley-Wiener theorem: 

\begin{proposition}\label{prop:paley-wiener} Suppose that $f \in L^2$ satisfies $|f(x)| \le C e^{-\alpha|x|^2}$ pointwise almost everywhere, for some $\alpha>0.$ Then It follows that $\widehat{f}$ may be extended to the whole complex plane as an entire function. Moreover, there is $C(\alpha) > 0$ such that 
\begin{equation}\label{eq:paley-wiener}
|\widehat{f}(z)| \le C \cdot e^{\theta(\alpha) |z|^2}, \, z \in \C,
\end{equation}
where we may take $\theta(\alpha) = \frac{\pi^2}{\alpha}.$
\end{proposition}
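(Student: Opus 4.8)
The plan is to prove the Gaussian-to-Gaussian Paley--Wiener bound by writing $\widehat{f}$ as an absolutely convergent integral over the real line and directly estimating the extension to complex arguments. First I would observe that since $|f(x)| \le C e^{-\alpha |x|^2}$ pointwise, the integral $\int_{\R} e^{-2\pi i z x} f(x)\, dx$ converges absolutely and locally uniformly for every $z \in \C$; by Morera's theorem (or differentiation under the integral sign), this defines an entire function of $z$ which agrees with $\widehat{f}$ on $\R$. So the analytic continuation is automatic, and the only real content is the growth bound \eqref{eq:paley-wiener}.

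The key estimate is the elementary completion-of-the-square computation. Writing $z = \xi + i\eta$, one has
\[
|\widehat{f}(z)| \le C \int_{\R} e^{2\pi \eta x} e^{-\alpha x^2}\, dx = C \, e^{\pi^2 \eta^2 / \alpha} \sqrt{\tfrac{\pi}{\alpha}},
\]
after shifting the integration variable to absorb the linear term $2\pi \eta x - \alpha x^2 = -\alpha (x - \pi\eta/\alpha)^2 + \pi^2 \eta^2/\alpha$. Since $\eta^2 \le |z|^2$, this already gives $|\widehat{f}(z)| \le C' e^{(\pi^2/\alpha)|z|^2}$, which is exactly the claimed bound with $\theta(\alpha) = \pi^2/\alpha$ (absorbing the harmless constant $\sqrt{\pi/\alpha}$ into $C$). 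One could be slightly more careful and use the sharper $|\widehat{f}(z)| \le C e^{(\pi^2/\alpha)(\imag z)^2}$, but the stated form suffices for the later application. The same argument applied to $f$ in place of $\widehat{f}$ (or simply by Fourier inversion and the hypothesis transported through Plancherel-type reasoning) shows the symmetric statement if needed.

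I do not expect any genuine obstacle here: the result is classical and the argument is a two-line computation once one commits to the direct integral representation. The only points requiring a modicum of care are (i) justifying that the pointwise bound on $f$ (rather than, say, an $L^2$ bound) is what makes the defining integral converge for complex $z$ — here the Gaussian decay dominates the exponential growth $e^{2\pi|\eta||x|}$ for every fixed $\eta$ — and (ii) tracking that the constant $\theta(\alpha)$ comes out as $\pi^2/\alpha$ with the normalization $\widehat{f}(\xi) = \int e^{-2\pi i \xi x} f(x)\, dx$ used in the paper, rather than some other convention-dependent multiple. Both are routine, so the proof will be short.
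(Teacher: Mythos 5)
Your proposal is correct and follows essentially the same route as the paper: analyticity via Morera/Fubini from the absolutely convergent integral, then the completion-of-the-square estimate yielding $|\widehat{f}(z)| \le C_\alpha e^{\pi^2|\imag z|^2/\alpha} \le C_\alpha e^{(\pi^2/\alpha)|z|^2}$. No gaps.
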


\begin{proof} The proof of such a result is a simple computation. Indeed, the fact that $\widehat{f}$ may be extended to be analytic follows directly from applying Fubini's theorem in conjunction with Morera's theorem, and hence we skip it. For the bound \eqref{eq:paley-wiener}, we start by noting that 
\[
|\widehat{f}(z)| \le \int_{\R} e^{-2\pi \text{Im}(z) t} e^{-\alpha t^2} \, dt.
\]
By completing the squares, it follows at once that 
\[
|\widehat{f}(z)| \le e^{\frac{\pi^2 |\text{Im}(z)|^2}{\alpha}} \int_{\R} e^{-\alpha\left(t + \frac{\pi \text{Im}(z)}{\alpha}\right)^2} \, dt \lesssim C_{\alpha} \cdot e^{\frac{\pi^2 |\text{Im}(z)|^2}{\alpha}}. 
\]
This finishes the proof of the claim. 
\end{proof}
 
 This result directly implies that both $f$ and $g$ may be extended to be entire functions throughout, as well as the same for $\widehat{f}, \widehat{g},$ and that all such functions are, indeed, of \emph{finite type} -- which is precisely the content of \eqref{eq:paley-wiener}. 

In that regard, let then $f$ and $g$ still denote the entire extension of each of those functions. Consider $F(z) = f(z) \overline{f(\overline{z})},$ and define $G$ analogously. By our previous considerations, $F$ and $G$ are two entire functions, each of which is of order $2,$ such that $F(\lambda) = G(\lambda), \, \forall \lambda \in \Lambda.$ 

We are now able to conclude. Since $F-G$ is of order $2,$ if we suppose that it is not identically zero, letting $Z(F-G) := \{z_n\}_{n \in \N}$ denote its zero sequence, we should have, according to \cite[Theorem~2.5.13]{Boas}, that 
\begin{equation}\label{eq:bound-zeros}
\liminf_{r \to \infty} \frac{n_{F-G}(r)}{r^2} < 2 \theta(\alpha),
\end{equation}
where $\theta(\alpha)$ is the constant from Proposition \ref{prop:paley-wiener} above, and $n_{F-G}(r) = \#\{ n \in \N \colon z_n \in B_r(0)\}$ denotes the number of zeros of $F-G$ inside the disk of center 0 and radius $r.$ 

Now, since $\lambda_i \in Z(F-G)$ for each $i \in \N$, by Lemma \ref{lemma:decay-lambda}, we should have, for $r>0$ sufficiently large, 
\begin{equation}\label{eq:density-points}
n_{F-G}(r) \ge 2 \cdot \# \left\{ i \in \N \colon i \gg 1, \sqrt{(2+\varepsilon)/C_{\alpha}} \sqrt{i} < r \right\} \ge \frac{2C_{\alpha} r^2}{2+\varepsilon}. 
\end{equation}
If we choose $C_{\alpha} > \frac{2\pi^2}{\alpha}$, then \eqref{eq:density-points} contradicts \eqref{eq:bound-zeros}, which directly implies that $F \equiv G,$ which, in particular, shows that $|f| \equiv |g|$ on the real line. If we define $\mathfrak{F}(z) = \widehat{f}(z) \overline{\widehat{f}(\overline{z})}$ and $\mathfrak{G}$ in the same way, the same argument shows that $\mathfrak{F} \equiv \mathfrak{G}.$ In other words, we have that $|f| = |g|$ and $|\widehat{f}| = |\widehat{g}|$ holds \emph{pointwise} on the real line, which was our goal.\\

Now, for the proof of Part (II) of Theorem \ref{thm:positive}, we simply recall the following result, which is a direct consequence of the work of Kulikov, Nazarov and Sodin \cite{kns}: 

\begin{theorem*}[Theorem~1(ii) in \cite{kns}] Suppose $\Lambda, \Gamma$ are two discrete subsets of $\R$ satisfying \eqref{eq:weak-density}. Then there is an infinite-dimensional space of functions $f \in \mathcal{S}(\R)$ such that 
\[
f(\lambda) = 0\, \forall \lambda \in \Lambda, \,\,\widehat{f}(\gamma) = 0\, \forall \gamma \in \Gamma. 
\]
\end{theorem*}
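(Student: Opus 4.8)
The plan is to construct the functions by the iterative, scale-by-scale scheme of Kulikov, Nazarov, and Sodin, of which this statement is a special case \cite{kns}. It suffices to produce one nonzero $f\in\mathcal S(\R)$ with $f|_\Lambda=0$ and $\widehat f|_\Gamma=0$; the surplus of degrees of freedom built into the construction then upgrades this to an infinite-dimensional family at no extra cost, as explained below. The only soft ingredient is a counting bound. Setting $\ell:=\min\{\liminf_{i\to\pm\infty}|\lambda_{i+1}-\lambda_i||\lambda_i|,\ \liminf_{i\to\pm\infty}|\gamma_{i+1}-\gamma_i||\gamma_i|\}>\tfrac12$ (this is \eqref{eq:weak-density}), the telescoping argument of Lemma \ref{lemma:decay-lambda}, run with a lower bound in place of an upper one, gives $|\lambda_i|^2\ge(2\ell-o(1))|i|$ and likewise for $\gamma_i$, hence for $R$ large
\[
\#\big(\Lambda\cap[-R,R]\big)+\#\big(\Gamma\cap[-R,R]\big)\le\Big(\tfrac{2}{\ell}+o(1)\Big)R^2<4R^2.
\]
The right-hand side $4R^2$ is the number of ``independent modes'' in the space--frequency window $[-R,R]\times[-R,R]$, so at every large scale the linear conditions imposed by $\Lambda$ and $\Gamma$ fall short of the available modes by a positive fraction of $R^2$. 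This margin is exactly what the \emph{strict} inequality $\ell>\tfrac12$ buys: at $\ell=\tfrac12$ one is at the break-even density of the Radchenko--Viazovska nodes $\sqrt{\Z_{\ge0}}$ and the margin disappears.

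Given this, the construction iterates over rapidly increasing scales $R_1\ll R_2\ll\cdots$ (say $R_{n+1}=R_n^{C}$ with $C$ large), producing $f=\sum_{n\ge1}f_n$ as follows. Having fixed $f_1,\dots,f_{n-1}$, one selects $f_n$ from a space of profiles essentially concentrated --- together with $\widehat{f_n}$, with Gaussian off-scale decay --- at scale $R_n$, subject to two requirements: that $f_n$ vanish on $\Lambda$ and $\widehat{f_n}$ vanish on $\Gamma$ at \emph{all} nodes of modulus in $(R_{n-1},R_n]$; and that $f_n$ and $\widehat{f_n}$ take, at the finitely many nodes of $\Lambda\cap[-R_{n-1},R_{n-1}]$ resp.\ $\Gamma\cap[-R_{n-1},R_{n-1}]$, exactly the values that cancel the accumulated errors of $f_1+\cdots+f_{n-1}$ there (errors nonzero only because the earlier profiles have small Gaussian tails reaching back to these nodes). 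The first batch of conditions numbers $(\tfrac{2}{\ell}+o(1))R_n^2<4R_n^2$ and the second only $O(R_{n-1}^2)=o(R_n^2)$, so by the count above $f_n$ exists, and the spare dimensions allow $\|f_n\|$, in every Schwartz seminorm, to be taken geometrically small in $n$. Since each $f_m$ decays like a Gaussian away from scale $R_m$ while the $R_n$ grow super-geometrically, the errors $f_m$ injects at other scales form a rapidly convergent series and are cancelled in the limit; thus $f:=\sum_nf_n$ converges in $\mathcal S(\R)$, obeys $f|_\Lambda=0$ and $\widehat f|_\Gamma=0$, and is nonzero provided $f_1\ne0$ and the later corrections are kept much smaller than $f_1$. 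Finally, since the first-stage solution space already has dimension $\gtrsim R_1^2$, one may start the scheme from any finite collection of linearly independent profiles $f_1$ --- independence survives the appended tiny corrections --- to obtain that many linearly independent admissible $f$; letting the collection grow yields an infinite-dimensional space.

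The main obstacle is \emph{quantitative}: having more unknowns than equations only yields \emph{some} solution, whereas the iteration closes only if one can choose $f_n$ whose norm, and in particular the size of whose own Gaussian tails, is controlled uniformly (or mildly) in $n$, so that each correction is genuinely smaller than the error it removes and the scheme contracts. This demands a lower bound for the smallest singular value of the relevant evaluation operator restricted to the profile space --- a well-conditioned right inverse --- extracted from the metric geometry of $\Lambda$ and $\Gamma$. It also forces one \emph{not} to confine the profiles to a single fixed window: no family of $\approx 4R^2$ functions with rapid off-box decay is simultaneously well-localized in $[-R,R]$ in space and in $[-R,R]$ in frequency (the best self-dual such families, e.g.\ finite Hermite expansions, reach only $\approx\pi R^2$ modes, which by a one-shot count would force $\ell$ above $2/\pi$ rather than the sharp $1/2$), so the sharp threshold is attained only by letting the correction at each scale borrow modes from the neighbouring scales. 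Carrying this out, and balancing it against the super-geometric growth of the $R_n$, is the technical heart of \cite{kns}; relative to the even one-dimensional case treated there, the only changes needed here are to run the argument for general (not necessarily symmetric) $\Lambda,\Gamma\subset\R$ and to track the $\gtrsim R_n^2$ surplus so as to land on an infinite-dimensional space rather than a single function.
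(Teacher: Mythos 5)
The paper does not actually prove this statement --- it invokes it verbatim from \cite{kns} (their Theorem~1(ii)) as a quoted, unnumbered theorem, so there is no in-house argument to compare against; the authors' contribution at this point is only the application of the citation. Your sketch is a faithful high-level description of the Kulikov--Nazarov--Sodin multi-scale construction: the counting margin supplied by the strict inequality $\ell>\tfrac12$ (roughly $(4 - 2/\ell)R^2$ spare modes at scale $R$) is computed correctly, you correctly identify that an uncontrolled ``more unknowns than equations'' argument is insufficient and that the iteration needs a right inverse of the evaluation map with quantified operator norm, and your remark that a one-shot Hermite truncation would only reach the threshold $\ell > 2/\pi$ rather than the sharp $\tfrac12$ is a genuine and correct observation about where the delicacy lies.

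That said, having named the quantitative obstacle, the proposal explicitly declines to overcome it (``Carrying this out \dots is the technical heart of \cite{kns}''), so this is a plan with the central lemma deferred rather than a proof. The device that actually produces the well-conditioned interpolants in \cite{kns} --- and which your sketch never mentions --- is Levin's theorem on entire functions of completely regular growth of order $2$ with prescribed zero set and indicator, from which one manufactures cardinal functions $\omega_k$ with $\omega_k(\lambda_j)=\delta_{jk}$ whose Gaussian decay in both space and frequency is controlled uniformly in $k$; that uniform control is precisely the ``well-conditioned right inverse'' you say you need and do not construct. (The present paper reproduces exactly this step in the radial higher-dimensional setting when proving Part~II of Theorem~\ref{thm:high-d-gen-pauli}, via Proposition~\ref{thm:levin}.) So either accept the black-box citation, as the paper does, or supply the Levin-type estimates; without them the proposal has a genuine gap at its quantitative core.
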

With that result, any function in that space which is not equivalently zero agrees with $g \equiv 0$ in a set as in Part (II) of Theorem \ref{thm:positive}, but obviously does not agree in absolute value with it in either space or frequency, concluding the proof of Theorem \ref{thm:positive}.
\end{proof}

\section{Proof of Theorem \ref{thm:discrete-hardy}}

\subsection{Proof of Theorem \ref{thm:discrete-hardy}} We start by noting that Proposition \ref{prop:Schwartz} implies directly that any $f$ as in the statement of Theorem \ref{thm:discrete-hardy} belongs automatically to the Schwartz class. We shall make use of this fact in the continuation of the proof. 

Now, let us again fix $F$ as in \eqref{eq:F-special}. The same use of Proposition \ref{prop:convex} for $f \cdot F_{\xi}$ yields 
\begin{align*}
& \int_{\mathbb{R}}\left(|\xi|-\frac{3}{2} u\right)^{2p}|f|^{2}\left|F_{\xi}\right|^{2} \leqslant \sigma^{-2 \cdot p} \int_{\mathbb{R}}|y|^{2p}\left|\widehat{f} * \widehat{F_{\xi}}\right|^{2} \\
&+ p \left( |\xi| - \frac{3}{2} u\right)^{2p - 1} \sigma^{-1} \sum_{\lambda \in \Lambda} |f(\lambda)|^2 |F_{\xi}(\lambda)|^2,
\end{align*}
where $\sigma \in \left( 0, \frac{C_A}{2}\right)$ is arbitrary. By the same argument as that used in the proof of Proposition \ref{prop:gaussian}, we have 
\begin{align*}
& \left( \frac{K-3}{K}\right)^{2 p}\left(\int_{|x| \geqslant K \cdot u}|x|^{2 p}|f|^{2}\right) \\
& \leqslant \sigma^{-2p} \cdot \int_{\mathbb{R}}|\widehat{f}|^{2}\left(|y|+\frac{k}{\pi u}\right)^{2 p} d y + C \cdot p \sigma^{-1} \cdot \sum_{\lambda \in \Lambda} |\lambda|^{2p-1} |f(\lambda)|^2.
\end{align*}
Thus, for $K>4$ sufficiently large, 
\begin{align}\label{eq:bound-f-fhat-1}
\int|x|^{2 p}|f|^{2} &\leq 2 \cdot \left(\frac{K(K+1)}{K(K-3)\sigma}\right)^{2p}\int_{\R}|\widehat{f}(y)|^{2}|y|^{2 p} d y \cr 
&+(K u)^{2 p} \int|f|^{2}+2\cdot\left( \frac{K}{\sigma(K-3)}\right)^{2p} (K+1)^{2 p}\left(\frac{p}{u}\right)^{2 p} \int|f|^{2} \cr
& + C \cdot p \sigma^{-1} \cdot \left( \frac{K}{K-3}\right)^{2p} \sum_{\lambda \in \Lambda} |\lambda|^{2p-1} |f(\lambda)|^2.
\end{align}
We then fix $K>1$ sufficiently large satisfying the conditions above, and let $u = \delta \cdot p^{\frac{1}{2}},$ where $\delta > 0$ is to be chosen later. We obtain finally that
\begin{align*}
\int |x|^{2sp} |f|^2 & \le \frac{1}{2} \int_{\R} |\widehat{f}(y)|^2 |y|^{2p} \, dy + \left( (\delta \cdot K)^{2p}   + 2 \cdot \left( \frac{K(K+1)}{\delta \sigma(K-3)} \right)^{2p}\right)\cdot p^{p} \int_{\R} |f|^2  \cr 
                    & + C \cdot p \sigma^{-1}  \cdot \left( \frac{K}{K-3}\right)^{2p}  \sum_{\lambda \in \Lambda} |\lambda|^{2p - 1} |f(\lambda)|^2. 
\end{align*}
Now, we swap the roles of $\widehat{f}$ and $f$ in \eqref{eq:bound-f-fhat-1}, and repeat the bulk of the argument above. This implies, in complete analogy to what we did before, that 
\begin{align*}
&\int|x|^{2p}|\widehat{f}|^{2} \leq \frac{1}{2} \int_{\R}|f(y)|^{2}|y|^{2 p} d y 
+\left( (\delta \cdot K)^{2p} + 2 \cdot \left( \frac{K(K+1)}{\delta \sigma(K-3)} \right)^{2p}\right) \cdot p^{p} \int|f|^{2} \cr
& + C \cdot p \sigma^{-1}  \cdot \left( \frac{K}{K-3}\right)^{2p} \sum_{\gamma \in \Gamma} |\gamma|^{2p-1} |\widehat{f}(\gamma)|^2.
\end{align*}
Let $\varepsilon = A-1 > 0$ by hypothesis. We then start fixing our constants: indeed, choose first $K>4$ such that 
$$p \cdot \left( \frac{K}{K-3}\right)^{2p} < (1+\varepsilon/4)^{p}$$
for all $p$ large enough. We then take $\delta > 0$ sufficiently small such that 
$$(\delta \cdot K)^{2p} \cdot p^p < \min\left(\frac{\varepsilon}{100}, \frac{1}{100\varepsilon}\right)^p \cdot \Gamma(p+1),$$
for all $p>0$ sufficiently large. The existence of such $\delta >0$ is ensured by Stirling's formula. Finally, take $\sigma > 0$ such that 
$$
\left( \frac{K(K+1)}{\delta \sigma (K-3)} \right)^{2p} p^p < \min\left(\frac{\varepsilon}{100}, \frac{1}{100\varepsilon}\right)^p \Gamma(p+1),
$$
again for $p$ large. The existence of such a value of $\sigma$ is ensured, once more, by Stirling's formula, as long as we take $C_A$ to be sufficiently large. 

Gathering all that information, and arguing again similarly as in \eqref{eq:upper-bound-discrete}, we have that 
\begin{equation}\label{eq:bound-Hardy-disc}
\int_{\R} |f(x)|^2 |x|^{2p} \, dx \le \min\left(\frac{\varepsilon}{100}, \frac{1}{100\varepsilon}\right)^p \Gamma(p+1) \int_{\R} |f|^2 
+ \, (1+\varepsilon/4)^p \cdot \frac{\Gamma\left( p + 1\right)}{(2A \pi)^{p}}. 
\end{equation}
We shall assume that $p \in \N$ for the remainder of the proof. By rearranging \eqref{eq:bound-Hardy-disc} and summing over all such $p$, we obtain that 
\[
\sum_{p \ge 0} \int_{\R} |f(x)|^2 \left( \left(\frac{1}{(1+\varepsilon/2)\min\left(\frac{\varepsilon}{100}, \frac{1}{100\varepsilon}\right)}\right)^p + \left(\frac{2A \pi}{1+\varepsilon/2}\right)^p \right) \frac{|x|^{2p}}{p!} \, dx < + \infty.
\]
Since $(1+\varepsilon/2) \min\left(\frac{\varepsilon}{100}, \frac{1}{100\varepsilon}\right) < \frac{1}{2\pi}$ for any $\varepsilon>0,$ this readily implies that there is $\eta > 0$ such that 
\[
\int_{\R} |f(x)|^2 e^{ 2(1+\eta)\pi |x|^2} \, dx < +\infty.
\]
Since the exact same argument holds for $\widehat{f},$ we conclude that we must have $f \equiv 0$ by the Cowling-Price uncertainty principle. This concludes the proof of Theorem \ref{thm:discrete-hardy}. 

\section{Higher-dimensional results} 

We next provide, as promised, a proof of Theorems \ref{thm:high-d-gen-pauli} and \ref{thm:discrete-hardy-high-d}, in light of what we did for the one-dimensional case and the results and techniques from \cite{kns} and \cite{Radchenko-Ramos} . 

\subsection{Proof of Theorem \ref{thm:high-d-gen-pauli}} \textbf{Part I.} We start with the followin higher-dimensional version of Lemma \ref{lemma:wirtinger}. 

\begin{claim}\label{claim:poincare} Let $\Omega \subset \R^d$ be the annulus $\Omega = B(0,R) \setminus B(0,r)$. For each $\varepsilon > 0,$ we have 
\[
\|u\|_{L^2(\Omega)}^2 \le (1+\varepsilon)\left( \frac{R}{r}\right)^{d-1} \frac{(R-r)^2}{\pi^2} \|\nabla u\|_{L^2(\Omega)}^2 + (1+\varepsilon^{-1}) R^{d-1} (R -r) \dashint_{\partial \Omega} |u(w)|^2 \, d\mathcal{H}^{d-1} (w). 
\]
\end{claim}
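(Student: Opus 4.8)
The plan is to reduce the claim to the one-dimensional Wirtinger inequality of Lemma~\ref{lemma:wirtinger} by slicing $\Omega$ into radial segments and carefully accounting for the spherical Jacobian. Write $x = \rho\omega$ with $\rho \in [r,R]$ and $\omega \in \mathbb{S}^{d-1}$, so that $\|u\|_{L^2(\Omega)}^2 = \int_{\mathbb{S}^{d-1}}\int_r^R |u(\rho\omega)|^2 \rho^{d-1}\,d\rho\,d\omega$ and likewise for the gradient, with the pointwise bound $|\nabla u(\rho\omega)|^2 \ge |\partial_\rho u(\rho\omega)|^2$. For fixed $\omega$, the function $\rho \mapsto u(\rho\omega)$ lies in $W^{1,2}([r,R])$ for a.e.\ $\omega$ by Fubini, so Lemma~\ref{lemma:wirtinger} applies on the interval $I = [r,R]$, with $|I| = R-r$ and endpoint values $u(r\omega)$, $u(R\omega)$; alternatively one first treats $u \in C^1(\overline{\Omega})$ and passes to $W^{1,2}(\Omega)$ by density, which is harmless for the later applications where $u$ is a Schwartz function times a smooth cutoff.

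The one-dimensional estimate differs from the target only through the weight $\rho^{d-1}$, which is handled by monotonicity: on the left we bound $\rho^{d-1} \le R^{d-1}$, giving $\|u\|_{L^2(\Omega)}^2 \le R^{d-1}\int_{\mathbb{S}^{d-1}}\int_r^R |u(\rho\omega)|^2\,d\rho\,d\omega$; applying Lemma~\ref{lemma:wirtinger} slicewise and integrating in $\omega$ then produces a gradient term $R^{d-1}\int_{\mathbb{S}^{d-1}}\int_r^R |\nabla u(\rho\omega)|^2\,d\rho\,d\omega$, which after the reverse bound $\int_r^R |\nabla u|^2\,d\rho \le r^{-(d-1)}\int_r^R |\nabla u|^2\rho^{d-1}\,d\rho$ becomes $(R/r)^{d-1}\|\nabla u\|_{L^2(\Omega)}^2$, i.e.\ exactly the first term on the right of the claim together with the factor $(1+\varepsilon)(R-r)^2\pi^{-2}$ from Lemma~\ref{lemma:wirtinger}. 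For the endpoint contribution, integrating $|u(r\omega)|^2 + |u(R\omega)|^2$ over $\omega$ and using that $\mathcal{H}^{d-1}$ restricted to $\partial B(0,s)$ equals $s^{d-1}\,d\omega$, together with $\partial\Omega = \partial B(0,R) \cup \partial B(0,r)$, rewrites this quantity through $\int_{\partial\Omega}|u|^2\,d\mathcal{H}^{d-1}$, hence through $\dashint_{\partial\Omega}|u|^2\,d\mathcal{H}^{d-1}$; bounding the radial weights by $R^{d-1}$ and $r^{d-1}$ as appropriate absorbs it into $(1+\varepsilon^{-1})R^{d-1}(R-r)\dashint_{\partial\Omega}|u|^2\,d\mathcal{H}^{d-1}$. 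Summing the three contributions yields the claimed inequality.

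The step demanding the most care is precisely this Jacobian bookkeeping: each occurrence of $\rho^{d-1}$ must be replaced by either $R^{d-1}$ or $r^{d-1}$ according to which side of the inequality it sits on, and one must verify that the resulting powers of $R/r$ assemble into exactly the stated constants — in particular the interplay between the factor $(R/r)^{d-1}$ on the gradient term and the factor $R^{d-1}$ paired with the normalized surface average on the boundary term. Apart from this, the argument is elementary and uses no Fourier-analytic input; it is the natural annular analogue of the interval Poincaré--Wirtinger inequality, and in the thin-annulus regime $R/r \to 1$ relevant to the later applications all of these weight factors degenerate into harmless multiplicative constants.
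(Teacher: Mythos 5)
Your proof is correct and arrives at the same constants, but the order of operations differs slightly from the paper's: you apply Lemma~\ref{lemma:wirtinger} ray-by-ray to $\rho \mapsto u(\rho\omega)$ for a.e.\ fixed direction $\omega$ (justified by Fubini or by density of $C^1(\overline{\Omega})$) and then integrate over $\mathbb{S}^{d-1}$, whereas the paper first forms the single radial function $h_u(t) = \bigl(\int_{\mathbb{S}^{d-1}} |u(t\cdot w)|^2\, d\mathcal{H}^{d-1}(w)\bigr)^{1/2}$, applies Lemma~\ref{lemma:wirtinger} once to $h_u$ on $[r,R]$, and controls $|h_u'(t)|$ by the angular $L^2$ norm of $\nabla u$ via Cauchy--Schwarz. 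The two routes are interchangeable: yours avoids the derivative-of-a-norm computation at the cost of a slicewise regularity remark, while the paper's keeps everything strictly one-dimensional. The Jacobian bookkeeping ($\rho^{d-1}\le R^{d-1}$ on the left, $\rho^{d-1}\ge r^{d-1}$ under the gradient) is identical in both and yields the stated $(R/r)^{d-1}$ factor. One caveat applies equally to both arguments: converting $\int_{\mathbb{S}^{d-1}}\bigl(|u(r\omega)|^2+|u(R\omega)|^2\bigr)\,d\omega$ into the \emph{normalized} average $\dashint_{\partial\Omega}|u|^2\,d\mathcal{H}^{d-1}$ introduces the perimeter factor $(R^{d-1}+r^{d-1})\,\mathcal{H}^{d-1}(\mathbb{S}^{d-1})/r^{d-1}$, so the boundary constant as literally stated is only correct up to a dimensional constant times $(R/r)^{d-1}$; this is harmless in the thin-annulus regime and is absorbed into $C_\varepsilon$ in Lemma~\ref{lemma:poincare}, but it is worth recording if one wants the displayed constant verbatim.
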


\begin{proof} Consider the one-dimensional function $h_u(t) := \left(\int_{\mathbb{S}^{d-1}} |u(t \cdot w)|^2 \, d \mathcal{H}^{d-1}(w)\right)^{1/2}$. Applying Lemma  \ref{lemma:wirtinger} to this function, we obtain that 
\begin{align*} 
\|u\|_{L^2(\Omega)}^2 & = \omega_d \cdot \int_{\mathbb{S}^{d-1}} \int_r^R |u(t \cdot w)|^2 t^{d-1} \, d \, t \, d \mathcal{H}^{d-1}(w) \cr 
 & = \omega_d R^{d-1} \int_r^R (h_u(t))^2 \, dt \cr 
 & \le \omega_d R^{d-1} \left( \frac{(R-r)^2}{\pi^2} \cdot (1 + \varepsilon) \int_r^R (h_u'(t))^2 \, dt + (1+ \varepsilon^{-1}) (R-r) \left( |h_u(r)|^2 + |h_u(R)|^2 \right) \right).
\end{align*} 
Now, note that 
\[
|h_u'(t)| \le \frac{\int_{\mathbb{S}^{d-1}} |\nabla u (t \cdot w)||u (t \cdot w)| \, d\mathcal{H}^{d-1}(w)}{|h_u(t)|} \le \left( \int_{\mathbb{S}^{d-1}} |\nabla u(t \cdot w)|^2 \, d \mathcal{H}^{d-1} (w) \right)^{1/2},
\]
by Cauchy-Schwarz. Hence, 
\begin{align*}
\|u\|_{L^2(\Omega)}^2 \le \left( \frac{R}{r} \right)^{d-1} \cdot \frac{(R-r)^2}{\pi^2} (1+\varepsilon) \|\nabla u\|_{L^2(\Omega)}^2 + (1+\varepsilon^{-1}) R^{d-1} (R - r ) \dashint_{\partial \Omega} |u(w)|^2 \, d \mathcal{H}^{d-1}(w), 
\end{align*}
which is the desired conclusion. 
\end{proof}

With those results at hand, we now state and prove a higher-dimensional version of Proposition \ref{prop:convex}.  

\begin{lemma}\label{lemma:poincare} Let $t>0$ and let $\varepsilon \ge 1 - \left(1+\frac{1}{2t}\right)^{-\frac{d-1}{2}}.$ Let $f:\R^d \to \R$ be a function such that $f \in L^2(\R^d) \cap H^1(\R^d)$, supported outside of a centered ball of radius $1,$ which vanishes on a collection $\Lambda$ of (centered) spheres with $(1-\varepsilon)(2t)^{-1}$-dense set of radii. Then, for all convex increasing $C^1$-functions $\Phi:\R_+\to\R_+,$ we have 
\[
\Phi(t^2) \int_{\R^d} |f(x)|^2 \, dx \le \int_{\R^d} \Phi(|\xi|^2) |\widehat{f}(\xi)|^2 \, d\xi + C_{\varepsilon} t \cdot \Phi'(t^2) \sum_{S \in \Lambda} \int_{S} |f(u)|^2 \, d\mathcal{H}^{n-1}(u). 
\]
\end{lemma}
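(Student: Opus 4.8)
The plan is to mirror exactly the one-dimensional argument in the proof of Proposition \ref{prop:convex}, but now summing over the annular gaps between consecutive spheres of $\Lambda$ and using Claim \ref{claim:poincare} in place of Lemma \ref{lemma:wirtinger}. Concretely, enumerate the radii of the spheres in $\Lambda$ as $\{r_j\}$ and consider an annulus $\Omega_j = B(0,R_j)\setminus B(0,r_j)$ with $R_j - r_j \le (1-\varepsilon)(2t)^{-1}$ and $f$ vanishing on $\partial\Omega_j$; since $f$ is supported outside the unit ball, all such $r_j$ are $\ge 1$, so $R_j/r_j \le 1 + (1-\varepsilon)(2t)^{-1} \le 1 + \tfrac{1}{2t}$, and the choice $\varepsilon \ge 1 - (1+\tfrac{1}{2t})^{-(d-1)/2}$ guarantees $(1+\varepsilon)(R_j/r_j)^{d-1} \le (1+\varepsilon)(1+\tfrac{1}{2t})^{d-1} \le (1+\varepsilon)/(1-\varepsilon)^{2}$ or, more simply, that the geometric factor $(R_j/r_j)^{d-1}$ is absorbed so that the leading constant in Claim \ref{claim:poincare} is still controlled by $(2t)^{-2}$ up to a factor $1+O(\varepsilon)$. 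Applying Claim \ref{claim:poincare} on each $\Omega_j$ and summing over $j$ (the boundary spheres are shared between consecutive annuli, hence each contributes to at most two terms), then invoking the Plancherel identity $\|\nabla f\|_{L^2}^2 = 4\pi^2 \int_{\R^d} |\xi|^2 |\widehat f(\xi)|^2\, d\xi$, yields
\[
\int_{\R^d} |f(x)|^2\, dx \le t^{-2} \int_{\R^d} |\xi|^2 |\widehat f(\xi)|^2\, d\xi + C_\varepsilon t^{-1} \sum_{S \in \Lambda} \int_S |f(u)|^2\, d\mathcal H^{d-1}(u),
\]
which is the exact higher-dimensional analogue of \eqref{eq:conseq-poinc}.

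From here the convexity argument is verbatim that of Proposition \ref{prop:convex}: rewrite the inequality above as
\[
\int_{|\xi|\ge t}\!\Big(\tfrac{|\xi|^2}{t^2}-1\Big)|\widehat f(\xi)|^2 d\xi + C_\varepsilon t^{-1}\!\sum_{S\in\Lambda}\int_S |f|^2 \ge \int_{|\xi|\le t}\!\Big(1-\tfrac{|\xi|^2}{t^2}\Big)|\widehat f(\xi)|^2 d\xi,
\]
multiply by $\Phi'(t^2) \ge 0$, and use the supporting-line inequalities $\Phi(|\xi|^2) - \Phi(t^2) \ge \Phi'(t^2)(|\xi|^2 - t^2)$ for $|\xi| \ge t$ together with the reverse inequality for $|\xi| \le t$. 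Reordering the resulting estimate and adding $\Phi(t^2)\int_{|\xi|\le t}|\widehat f|^2$ to both sides (so as to reconstitute $\int_{\R^d}\Phi(|\xi|^2)|\widehat f|^2$ on the right, bounding $\Phi(|\xi|^2)\le\Phi(t^2)$ crudely on the inner region is not even needed since the $|\xi|\le t$ piece is handled by the supporting line as well) produces precisely
\[
\Phi(t^2)\int_{\R^d}|f(x)|^2 dx \le \int_{\R^d}\Phi(|\xi|^2)|\widehat f(\xi)|^2 d\xi + C_\varepsilon t\,\Phi'(t^2)\sum_{S\in\Lambda}\int_S |f(u)|^2 d\mathcal H^{d-1}(u).
\]

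The only genuine subtlety, and hence the step I would be most careful about, is the bookkeeping of the geometric constant $(R/r)^{d-1}$ in Claim \ref{claim:poincare}: unlike in dimension one, this factor is strictly larger than $1$, and it is precisely to neutralize it — so that the effective Poincaré constant is $(1+\varepsilon)\pi^{-2}(2t)^{-2}$ rather than something with an uncontrolled dimensional blow-up — that the hypothesis $\varepsilon \ge 1 - (1+\tfrac{1}{2t})^{-(d-1)/2}$ is imposed. One must check that with $r \ge 1$ and $R - r \le (1-\varepsilon)(2t)^{-1} \le (2t)^{-1}$, one has $(R/r)^{d-1} \le (1+\tfrac{1}{2t})^{d-1}$, and that the prescribed lower bound on $\varepsilon$ is exactly what makes $(1+\varepsilon)(1+\tfrac{1}{2t})^{d-1}$ no worse than a bounded multiple of $(1-\varepsilon)^{-1}$ (equivalently, that $(1-\varepsilon)(1+\tfrac1{2t})^{(d-1)/2} \le 1$), so that after multiplying through by the square of the interval length $(1-\varepsilon)^2(2t)^{-2}$ the leading coefficient collapses to $t^{-2}$ up to harmless factors. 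All remaining manipulations — the Plancherel step, the overlap count of boundary spheres, the convexity/supporting-line estimate — are routine and identical to the one-dimensional case.
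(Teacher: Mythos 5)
Your proposal is correct and follows essentially the same route as the paper: apply Claim \ref{claim:poincare} on the annuli between consecutive spheres of $\Lambda$, sum (with the factor-of-two overlap of boundary spheres), use Plancherel to pass to $\int |\xi|^2|\widehat f|^2$, and then run the supporting-line/convexity argument of Proposition \ref{prop:convex} verbatim. Your explicit tracking of the geometric factor $(R/r)^{d-1}$ and of why the hypothesis $\varepsilon \ge 1-\left(1+\frac{1}{2t}\right)^{-(d-1)/2}$ neutralizes it is exactly the point the paper's (much terser) proof relies on implicitly.
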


\begin{proof} Using Claim \ref{claim:poincare} summed over the spheres at which $f$ vanishes and using Plancherel, in the same way as we did in Lemma \ref{lemma:wirtinger}, we obtain 
\[
\int_{\R^d} |f(x)|^2 \, dx \le (1+\varepsilon) \frac{(1-\varepsilon)^2 }{(2t)^2 \pi ^2} \int_{\R^d} |\nabla f(x)|^2 \, dx + C_{\varepsilon} t^{-1} \sum_{S \in \Lambda} \int_S |f(u)|^2 \, d\mathcal{H}^{d-1} (u). 
\]
\[
\le t^{-2} \int_{\R^d} |y|^2 |\widehat{f}(y)|^2 \, dy + C_{\varepsilon} t^{-1} \sum_{S \in \Lambda} \int_S |f(u)|^2 \, du. 
\]
The result then follows by repeating the proof of Lemma \ref{lemma:wirtinger}. 
\end{proof} 

With that result available to us, we follow the main setup in \cite[Section~5.1]{Radchenko-Ramos}. Indeed, we reduce our set of radii once again by applying Lemma \ref{lemma:sets-lower}. With that reduction, Propositions \ref{prop:schwartz-dec} and \ref{prop:Schwartz} both hold in this context by applying the same methods verbatim. We then let
$$F = 1_{B_u} * \underbrace{\avgI_{B_{u/2k}} * \cdots *  \avgI_{B_{u/2k}}}_{k-\text{fold convolution}} \quad ,$$
where we define $\avgI_B(x) := \frac{1}{|B|} 1_B(x),$ and $B_r$ denotes the Euclidean ball with center $0$ and radius $r$. Using Lemma \ref{lemma:poincare} for $f \cdot F_v$, where $F_v(x) = F(x-v)$, and undertaking the same computations as in \cite[Proof~of~Proposition~5.1]{Radchenko-Ramos}, 
\begin{align*}
\int_{\R^d} |x|^{2p} |f(x)|^2 \, & dx  \le 2^{d} \left( \frac{1}{\sigma} \left(\frac{K}{K-4}\right) \cdot \frac{K+1}{K} \right)^{2p} \int_{\R^d} |\widehat{f}(\xi)|^2 |\xi|^{2p} \, d\xi\cr
 & + \left(K^{2p} u^{2p} + 2^{d+1} \left( \sqrt{\frac{2d + 4}{\pi}} (K+1)\right)^{2p} \left( \frac{p}{u} \right)^{2p} \right) \int_{\R^d} |f(x)|^2 \, dx \cr 
 & + C_d (1+\varepsilon)^p \sum_{S \in \Lambda} r(S)^{2p-1} \int_S |f(w)|^2 \, d \mathcal{H}^{n-1}(w),
\end{align*}
where $K > 4$ is a fixed large parameter, $u > X_0$ will be chosen in a moment, $\sigma > 1$, $\varepsilon>0$ is a fixed small constant depending only on $K$, and $r(S)$ denotes the radius of the sphere $S \in \Lambda$. Naturally, the same holds for $\widehat{f}$ in place of $f$, so that, by employing the same strategy as we did above in the proof of Theorem \ref{thm:positive}, upon optimizing on $u$, we obtain that 
\begin{align*}
\int_{\mathbb{R}^d}|x|^{2 p} & |f(x)|^{2} d x  \leqslant C_d^{p} p^{ p} \int_{\mathbb{R}}|f|^{2} \cr 
& + C_d \cdot (1+\varepsilon)^p \left( \sum_{S \in \Lambda} r(S)^{2p-1} \int_S |f(w)|^2 \, d\mathcal{H}^{d-1}(w) + \sum_{\Sigma \in \Gamma} r(\Sigma)^{2p-1} \int_\Sigma |\widehat{f}(\omega)|^2 \, d\mathcal{H}^{d-1}(\omega) \right).
\end{align*}
Since we have $|f(w)| \le e^{-\alpha|w|^2}, \, |\widehat{f}(\omega)| \le e^{-\alpha |\omega|^2}$, whenever $w \in S \in \Lambda, \, \omega \in \Sigma \in \Gamma$, Lemma \ref{lemma:decay-lambda} implies that the second term on the right-hand side above is bounded by 
\begin{align*} 
C_d \sum_{k \ge 0} & |\lambda_k|^{2p+d-2} e^{-2\alpha|\lambda_k|^2} + \sum_{j \ge 0} |\gamma_j|^{2p+d-2} e^{-2 \alpha |\gamma_j|^2} \cr 
& \le C_d \left( \sum_{k \ge 0} |\lambda_k|^{2p + d -1} |\lambda_{k+1} - \lambda_k| e^{-2\alpha |\lambda_k|^2} + \sum_{j \ge 0} |\gamma_j|^{2p+d-1} |\gamma_{j+1} - \gamma_j| e^{-2\alpha|\gamma_j|^2} \right) \cr 
& \le \rho_d^p \cdot \Gamma\left(p + \frac{d+2}{2}\right),
\end{align*}
for some (possibly dimension-dependent) constant $\rho_d > 0$. Hence, we conclude that 
\[
\int_{\R^d} |x|^{2p} |f(x)|^2 \, dx + \int_{\R^d} |y|^{2p} |\widehat{f}(y)|^2 \, dy \le C_d^p \cdot \left( p^p \|f\|_2^2 + \Gamma\left( p + \frac{d+1}{2} \right)\right). 
\]
We readily conclude, by using once more Stirling's formula and optimizing on $p$, that 
\[
\int_{\R^d} e^{\alpha' |x|^2} |f(x)|^2 \, dx + \int_{\R^d} e^{\alpha' |y|^2} |\widehat{f}(y)|^2 \, dy < + \infty. 
\]
An argument entirely analogous to the one we used in Proposition \ref{prop:paley-wiener} shows that $f$ and $\widehat{f}$ may both be extended to be \emph{entire} on $\C^d$, with the additional property that 
\[
\sup_{z \in \C^d} \left( e^{-\theta(\alpha) |z|^2} |f(z)| + e^{-\theta(\alpha)|z|^2} |\widehat{f}(z)|\right) < +\infty. 
\]
We then consider an axis-parallel line $\ell$ in $\R^d$. By hypothesis, $f|_\ell$ and $g|_\ell$ are both one-dimensional analytic, and they satisfy $\sup_{z \in \C} \left(e^{-\theta(\alpha)|z|^2} |f|_{\ell}(z)| + e^{-\theta(\alpha)|z|^2} |g|_{\ell}(z)|\right) < \infty$. Consider then $h_\ell := (f|_\ell)^* f|_\ell - (g|_{\ell})^* g|_\ell$. This new function is again entire, and it satisfies further that $\sup_{z \in \C} e^{-2\theta(\alpha)|z|^2} |h_\ell(z)| < +\infty$. We then use \cite[Theorem~2.5.13]{Boas} with $h_\ell$, which implies that, since $h_\ell$ vanishes on points which grow at the same asymptotic rate as the radii of spheres in $\Lambda$, we conclude that, under the conditions of our result, we obtain that $h_\ell \equiv 0$, which alternatively reads as $|f|_\ell| = |g|_\ell|$ \emph{pointwise}. Since the axis-parallel line $\ell$ was \emph{arbitrary}, we conclude thus that $|f| = |g|$ holds pointwise everywhere. The same holds obviously on the Fourier side, concluding the proof of Part I of Theorem \ref{thm:high-d-gen-pauli}.

\vspace{2mm}

\noindent\textbf{Part II.} We now indicate how to modify the counterexample constructions from \cite{kns} and \cite{Radchenko-Ramos} in order to prove the second part of Theorem \ref{thm:high-d-gen-pauli}. We start with some preliminaries. 

Let $k:[-\pi,\pi] \to \R$ be defined by 
\begin{equation}\label{eq:k-def-2}
k(\theta) = (\pi \beta) \sin(2 \theta) - \gamma \cos(2 \theta)
\end{equation} whenever $\theta \in [0,\pi/2],$ and extended to $\theta \in [-\pi,\pi]$ such that it becomes even and symmetric with respect to $\pi/2.$ For such $k,$ we call a discrete set $\mathcal{Z} = \mathcal{Z}_1^+ \cup \mathcal{Z}_1^{-} \cup \mathcal{Z}_2^+ \cup \mathcal{Z}_2^{-} \subset \C,$ where $\mathcal{Z}_j^{\pm} \subset \left\{ \arg (z) = \frac{(j\pm1) \pi}{2} \right\},$ \emph{$k-$regular} if:

\begin{enumerate}[(I)]
\item $\mathcal{Z}_j^{\pm}$ has density $m_j/4\pi$ with respect to exponent $2.$ That is, it holds that 
$$|\mathcal{Z}_j^{\pm} \cap (0,r e^{i\theta_j^{\pm}})| \sim r^2m_j/ 4\pi$$
as $r \to \infty$, where $m_1 = 2 \pi \beta, m_2 = 2\gamma,$ with $\theta_j^{\pm}=\frac{(j\pm1) \pi}{2}$;

\vspace{2mm}

\item the disks $\{D_z\}_{z \in \mathcal{Z}} = \{B(z,c\cdot (1+|z|)^{-1})\}_{z \in \mathcal{Z}}$ are all \emph{disjoint} for some $c>0.$  
\end{enumerate} 
\vspace{2mm}

We now state the following result by Levin, which will be crucial in our construction: 

\begin{proposition}[Theorem~5, Ch. II, \cite{Levin}]\label{thm:levin}
Let \( k \) be as in \eqref{eq:k-def-2}. Then, for any \( k \)-regular set \( \mathcal{Z} \), there exists an entire function \( S \), whose zeroes are simple and coincide with \( \mathcal{Z} \), such that, for every \( \varepsilon > 0 \),
\begin{align}
    |S(re^{i\theta})| \le C_{\varepsilon} e^{(k(\theta) + \varepsilon)r^2},  & \text{ everywhere in } \mathbb{C}, \tag{I} \\
    |S(re^{i\theta})| \ge c_{\varepsilon} e^{(k(\theta) - \varepsilon)r^2}, & \text{ whenever } w = re^{i\theta} \not\in \cup_{z \in \mathcal{Z}} D_z. \tag{II}
\end{align}
\end{proposition}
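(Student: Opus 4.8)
This is the classical construction of a canonical product of order two with a prescribed indicator; the plan below indicates the route such a proof takes.

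The plan is to build $S$ directly as the genus-two Weierstrass product over the given zeros. Writing $E_2(w) = (1-w)\exp\!\big(w+\tfrac{w^2}{2}\big)$ for the primary factor, set
\[
S(z) = \prod_{w\in\mathcal{Z}} E_2(z/w).
\]
Condition (I) forces the counting function $n_{\mathcal{Z}}(r)$ to be $O(r^2)$, hence $\sum_{w}|w|^{-2-\delta}<\infty$ for every $\delta>0$, which is exactly what makes this product converge locally uniformly to an entire function of order at most $2$ and finite type, with zero set $\mathcal{Z}$ (all zeros simple since the points of $\mathcal{Z}$ are distinct). The evenness of $k$ and its symmetry about $\pi/2$ translate into invariance of $\mathcal{Z}$ under $z\mapsto\bar z$ and $z\mapsto -z$, so one may arrange that $S$ has real Taylor coefficients and the corresponding symmetries; this is convenient but not strictly needed.

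For the upper bound (I) I would compute the indicator $h_S(\theta)=\limsup_{r\to\infty} r^{-2}\log|S(re^{i\theta})|$ by the usual device of passing from the sum over zeros to a Stieltjes integral along each ray $\arg w=\varphi$ in $\log|S(re^{i\theta})| = \sum_{w\in\mathcal{Z}} \log\big|E_2(re^{i\theta}/w)\big|$, using the asymptotics $\log|E_2(u)| = \real(u^2/2) + O(\log|u|)$ valid away from a fixed neighbourhood of $u=1$. By (I) the scaled zero-counting measure converges to a measure carried by the two axes with weights $m_1/4\pi$ and $m_2/4\pi$; integrating the leading term $\tfrac12\real\big((re^{i\theta}/t)^2\big)$ against this limit and performing the elementary (if slightly tedious) evaluation produces exactly $k(\theta)r^2$. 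The one point needing care is the contribution of the zeros within bounded distance of $w=re^{i\theta}$: there are $O(r)$ of them by (I), and since $w$ avoids every disk $D_z$ one has $|w-z|\gtrsim (1+|z|)^{-1}\gtrsim r^{-1}$, so these factors together contribute at least $-O(r\log r)=o(r^2)$ and do not perturb the indicator. Absorbing all $o(r^2)$ errors into $\varepsilon$ yields (I) throughout $\C$.

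The main obstacle is the lower bound (II), a minimum-modulus statement off the exceptional disks. I would fix $w=re^{i\theta}\notin\bigcup_{z\in\mathcal{Z}}D_z$ and split $\log|S(w)|$ into the contribution of "far" zeros, say $|z|\notin[\tfrac12 r,2r]$, and that of the remaining "near" zeros. For the far zeros the factors stay uniformly away from their singularities, so the same integral asymptotics as above give a two-sided bound whose leading term is the relevant part of $k(\theta)r^2$. For the near zeros the hypothesis that $w$ avoids every $D_z$ gives $|1-w/z|\ge c(1+|z|)^{-2}$, and since there are only $O(r)$ of them their product contributes $\ge -O(r\log r)=o(r^2)$, while the exponential correction factors $\exp(w/z + w^2/2z^2)$ have modulus bounded below by $\exp(-O(r))$. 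Combining the two contributions and taking $r$ large gives $\log|S(w)|\ge (k(\theta)-\varepsilon)r^2$. The delicate part throughout is uniformity: the estimates must hold with $\varepsilon$ independent of $\theta$, and the exceptional set must come out to be exactly $\bigcup_z D_z$ rather than something larger. This is precisely what Levin's general theorem on canonical products of completely regular growth provides, and the cleanest proof invokes that machinery rather than reproving the minimum-modulus estimates by hand.
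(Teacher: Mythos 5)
The paper offers no proof of this proposition: it is quoted verbatim as Theorem~5 of Chapter~II of Levin's book and used as a black box, so your closing decision to ``invoke that machinery'' is in fact exactly what the authors do. To that extent your proposal is consistent with the paper.

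However, the portion of your sketch that attempts to reprove the result contains a genuine quantitative gap in the lower bound (II). You claim that the ``near'' zeros, those with $|z|\in[\tfrac12 r,2r]$, number $O(r)$. They do not: by the density hypothesis the counting function along each ray satisfies $n(t)\sim (m_j/4\pi)\,t^2$, so the annulus $\tfrac12 r\le |z|\le 2r$ contains $\asymp r^2$ zeros. With $O(r^2)$ near factors each bounded below only by $|1-w/z|\gtrsim (1+|z|)^{-2}\gtrsim r^{-2}$, your product over near zeros is bounded below merely by $e^{-O(r^2\log r)}$, which swamps the target error $e^{-\varepsilon r^2}$ rather than being $o(r^2)$ in the exponent. (The $O(r)$ count is correct only for zeros within \emph{bounded distance} of the point $w$, because the disjointness of the disks $D_z$ forces consecutive zeros near modulus $r$ to be separated by $\gtrsim r^{-1}$; it fails for the whole annulus.) Closing this gap is precisely the hard content of Levin's theorem: one must compare the sum over near zeros with the corresponding integral against the limiting density, exploiting the regular distribution and the separation condition, rather than bounding each factor crudely from below. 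The upper bound (I) and the convergence/order computation for the genus-two canonical product in your sketch are fine.
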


We shall use Proposition \ref{thm:levin} above with $\mathcal{Z}_1^- = - \mathcal{Z}_1^+,$ and $\mathcal{Z}_1^+ = \{ \lambda_k \colon \lambda_k \in \Lambda\}$. We select an arbitrary set $\mathcal{Z}_2^+$ so that $\mathcal{Z}_2^- = - \mathcal{Z}_2^+,$ and the union set $\mathcal{Z} = \cup_{i=1,2} \cup_{\pm} \mathcal{Z}_i^{\pm}$ is $k$-regular for some $\gamma$ to be chosen later. According to \cite[Section~3.3]{Radchenko-Ramos}, we have that the function $S$ which we obtain by that process may be taken to be \emph{even}. 

Consider then $S_k(z) = \frac{S(z)}{z^2 - \lambda_k^2}.$ This function is again even, and by the way we constructed $S$, it is entire and satisfies (by a route application of the maximum principle) 
\begin{equation}\label{eq:S-bound}
|S_{k}(z)| \lesssim C_{\varepsilon} e^{(k(\theta)+\varepsilon)|z|^2}
\end{equation}
in all of $\C$, uniformly on $k \in \N$. 

\begin{proposition} Let $\sigma_k(x) = S_k(|x|)$ for $x \in \R^d$. There exist two numbers $\beta' > \beta'' > 0$ such that 
\[
|\sigma_k(x)| \le C e^{- \beta'' |x|^2}, \quad |\widehat{\sigma_k}(y)| \le C e^{- C \beta'|y|^2}, 
\]
for all $x,y \in \R^d$. 
\end{proposition}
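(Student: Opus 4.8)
The goal is to show that the radial function $\sigma_k(x) = S_k(|x|)$ and its Fourier transform both have Gaussian decay, with the decay rate of $\widehat{\sigma_k}$ comparable to that of $\sigma_k$ up to dimensional constants. The plan is to extract this from the two-sided bounds on $S$ furnished by Proposition~\ref{thm:levin}, together with the classical relation between the decay rate of a radial function and the growth type of its holomorphic extension. First I would record the consequence of the definition \eqref{eq:k-def-2} that on the real axis $\theta = 0$ we have $k(0) = -\gamma < 0$, so that \eqref{eq:S-bound} gives $|S_k(x)| \lesssim C_\varepsilon e^{(-\gamma + \varepsilon)x^2}$ for real $x$; choosing $\varepsilon$ small, this already produces a number $\beta'' \in (0,\gamma)$ with $|\sigma_k(x)| = |S_k(|x|)| \le C e^{-\beta''|x|^2}$ uniformly in $k$, which is the first asserted bound.

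**The Fourier side.**
For the second bound I would first pass to the holomorphic extension of $\sigma_k$ to $\mathbb C^d$. Since $S_k$ is even and entire of order $2$, the map $z \mapsto S_k(\sqrt{z})$ is a well-defined entire function of a single variable of order $1$, and composing with the (entire) map $\mathbb C^d \to \mathbb C$, $z \mapsto z_1^2 + \dots + z_d^2$, shows $\sigma_k$ extends to an entire function on $\mathbb C^d$. The growth along the imaginary directions is what controls $\widehat{\sigma_k}$: using \eqref{eq:S-bound} with $\theta$ ranging over all of $[-\pi,\pi]$ and $\max_\theta k(\theta) = m := \max(\pi\beta, \sqrt{(\pi\beta)^2 + \gamma^2})$ (a routine optimization of \eqref{eq:k-def-2}), one gets $|\sigma_k(z)| \le C_\varepsilon e^{(m+\varepsilon)|z|^2}$ on all of $\mathbb C^d$, uniformly in $k$. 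Combining this Paley--Wiener-type growth bound with the real-axis decay $|\sigma_k(x)| \le C e^{-\beta''|x|^2}$ and running the computation exactly as in Proposition~\ref{prop:paley-wiener} (completing the square in the defining integral $\widehat{\sigma_k}(y) = \int e^{-2\pi i y\cdot x}\sigma_k(x)\,dx$, shifting contours using the global growth bound to justify the shift, then bounding), I would obtain $|\widehat{\sigma_k}(y)| \le C e^{-c|y|^2}$ for a constant $c$ depending only on $\beta''$ and $m$, hence on $\beta, \gamma, d$; setting $\beta' $ to be this $c$ divided by the dimensional constant $C$ appearing in the statement gives the claimed inequality, and $\beta' > \beta''$ can be arranged by taking $\gamma$ (equivalently $\beta''$) small enough relative to the type, which is harmless since $\gamma$ is still free to be chosen.

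**The main obstacle.**
The genuinely delicate point is making the contour-shift argument for $\widehat{\sigma_k}$ honest in $d$ dimensions: a radial Schwartz function's Fourier transform is again radial, so one would like to reduce to a one-dimensional Hankel-transform computation, but the clean ``complete the square'' estimate of Proposition~\ref{prop:paley-wiener} used the flat structure of $\int_{\mathbb R} e^{-2\pi i \xi t} f(t)\,dt$. The safest route — and the one I would follow — is to avoid the Hankel transform entirely and instead estimate $\widehat{\sigma_k}$ directly from the $\mathbb C^d$ growth bound: for $y \in \mathbb R^d$ one shifts each real integration variable $x_j$ to $x_j - i\,\pi y_j / (m+\varepsilon)$ (a product of one-dimensional shifts), picks up the factor $e^{\pi^2|y|^2/(m+\varepsilon)}$ from the exponential $e^{-2\pi i y \cdot x}$, and the shifted integral converges because the global bound $|\sigma_k(z)| \le C_\varepsilon e^{(m+\varepsilon)|z|^2}$ dominates the growth introduced by the shift while the real-axis Gaussian decay is more than enough once one is back on a translated copy of $\mathbb R^d$ — one must check that $|\sigma_k(x - i\eta)| \le C e^{-\beta''|x|^2 + C'|\eta|^2}$, which follows by interpolating the real-axis decay against the global growth bound via the Phragmén--Lindelöf / three-lines principle applied on the strip $\{\,|\operatorname{Im} z| \le |\eta|\,\}$. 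The uniformity in $k$ is automatic throughout, since every estimate invoked — \eqref{eq:S-bound}, the real-axis decay, and the three-lines interpolation — is uniform in $k$ by construction of $S_k$.
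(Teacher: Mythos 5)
Your overall architecture is the same as the paper's: deduce the spatial Gaussian decay from $k(0)=-\gamma<0$ via Proposition~\ref{thm:levin}(I), then obtain the frequency decay by shifting each integration variable $x_j\mapsto x_j-it_j$ with $t$ a suitable multiple of $y$ and bounding the shifted integral. However, the step you rely on to control $\sigma_k$ on the shifted contour is a genuine gap. You propose to get $|\sigma_k(x-i\eta)|\le Ce^{-\beta''|x|^2+C'|\eta|^2}$ by interpolating the real-axis decay against the global growth bound $|\sigma_k(z)|\le C_\varepsilon e^{(m+\varepsilon)|z|^2}$ via Phragm\'en--Lindel\"of on the strip $\{|\operatorname{Im}z|\le|\eta|\}$. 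This does not work: the three-lines theorem on that strip requires bounds on the lines $\operatorname{Im}z=\pm|\eta|$, which are precisely what you are trying to prove; the half-plane version of Phragm\'en--Lindel\"of fails for functions of order $2>1$; and interpolation on the quarter-plane between $\arg z=0$ and $\arg z=\pi/2$ sits exactly at the critical opening $\pi/\rho=\pi/2$ for order $\rho=2$ (compare $e^{-iz^2}$, which is bounded on both rays but has type-$1$ order-$2$ growth inside). Without a valid intermediate bound, neither the contour shift itself (vanishing of the side contributions) nor the convergence of the shifted integral is justified; a global upper bound $e^{(m+\varepsilon)|z|^2}$ ``dominating the growth introduced by the shift'' is not a convergence argument.

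The missing ingredient is the full sectorial information in Levin's bound, which is exactly what the paper uses. Writing $S_k(z)=H_k(z^2)$, the bound (I) together with $k(0)=-\gamma<0$ and continuity of $k$ gives $|H_k(w)|\le Ce^{-c|w|}$ in an entire \emph{cone} about the positive real axis, not merely on the axis. Since the shifted argument is the single complex number $\sum_j(x_j+it_j)^2=\|x\|^2-\|t\|^2+2i\langle x,t\rangle$, it lies in such a cone as soon as $\|x\|\gg\|t\|$, and this simultaneously justifies the contour deformation and yields integrability on the shifted contour with the quantitative factor $(1+|t|^d)e^{C\|t\|^2}$ needed to close the estimate. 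Replacing your interpolation step by this cone decay (and fixing the sign slip: the shift produces the \emph{decaying} factor $e^{-2\pi y\cdot t}$ from $e^{-2\pi i y\cdot(x-it)}$, which is the source of the Gaussian decay of $\widehat{\sigma_k}$) brings your argument in line with the paper's proof. Your handling of the remaining points (the global type $m=\sqrt{(\pi\beta)^2+\gamma^2}$, uniformity in $k$, and arranging $\beta'>\beta''$ by choosing $\gamma$ small) is fine.
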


\begin{proof} We start by writing $S_k(z) = H_k(z^2)$, for some entire function $H_k$ with good growth properties. Such a representation clearly holds due to the even nature of the functions $S_k$. We then write 

\begin{align*}
\widehat{\sigma_k}(y) &= \int_{\R^d} S_k(|x|) e^{- 2\pi i x \cdot y} \, dx  = \int_{\R^d} H_k\left( \sum_{j=1}^d x_j^2 \right) e^{- 2 \pi i x \cdot y} \, dx \cr 
                    & = \int_{\R^d} H_k\left( \sum_{j=1}^d (x_j + i t_j)^2 \right) e^{- 2 \pi i (x+ i t) \cdot y} \, dx ,
\end{align*}
where we have used that $|H_k(z)|\le e^{-c|z|}$ for a small cone around the $\R$-axis in order to change the contour of each of the $d$ integrals above, and where $t$ is a multiple of $y$, to be chosen later. We then bound
\begin{align*}
|\widehat{\sigma_k}(y)| &  \le e^{-2\pi |y||t|} \left| \int_{\R^d} \left|H_k \left( \sum_{j=1}^d (x_j +it_j)^2)\right)\right| e^{-\varepsilon r(x,t)} e^{\varepsilon r(x,t)}\, d x \right| \cr 
& \le e^{-2 \pi |y||t|} \sup_{x \in \R^d} \left( \left| H_{k}\left( \sum_{j=1}^d (x_j + i t_j)^2 \right) \right| e^{\varepsilon r(x,t)} \right) \times \left( \int_{\R^d} e^{-\varepsilon r(x,t)} \, dx \right),
\end{align*}
where we write 
\[
\sum_{j=1}^d (x_j + i t_j)^2 = \|x\|^2 - \|t\|^2 + 2 i \langle x,t\rangle =: r(x,t) e^{i \theta(x,t)}. 
\]
We now simply note that $r(x,t) \ge |\|x\|^2 - \|t\|^2|$, which implies that $\int_{\R^d} e^{-\varepsilon r(x,t)} \, dx \lesssim C_{\varepsilon} (1+|t|^{d}).$  Thus, 
\[
|\widehat{\sigma_k}(y)| \lesssim  (1+|t|^d) \cdot e^{-2 \pi |y| |t|}\sup_{x \in \R^d} \left( \left| H_{k}\left( \sum_{j=1}^d (x_j + i t_j)^2 \right) \right| e^{\varepsilon r(x,t)} \right). 
\]
Using then the properties of $S_k$ and reproducing the end of the proof of \cite[Lemma~3.2]{Radchenko-Ramos}, we are able to finish the proof of the claimed inequality. We omit the details and refer the reader to \cite{Radchenko-Ramos,kns} for further details. 
\end{proof}
By now considering $\omega_k(x) = \frac{\sigma_k(x)}{S_k'(\lambda_k)}$, we may conclude that there are $\beta' > \beta > \beta'' > 0$ such that 
\[
|\omega_k(x)| \le C e^{- \beta''|x|^2 + \beta |\lambda_k|^2}, \quad |\widehat{\omega_k}(y)| \le C e^{-\beta' |x|^2 + \beta |\lambda_k|^2},
\]
for each $x,y \in \R^d$. We further have that $\omega_k(\lambda_i) = \delta_{i,k}$. Hence, we are now able to reproduce the iteration process present in the arguments in \cite[Sections~7.4~and~7.5]{kns} and \cite[Section~3.4]{Radchenko-Ramos}. These imply that, if \eqref{eq:weak-density} holds, then there exist an infinite dimensional space of functions $f$ such that $f|_{\Lambda} = \widehat{f}|_{\Gamma} = 0$. In particular, any two of those functions satisfy the assertions of Part II of Theorem \ref{thm:high-d-gen-pauli}, as desired.

\begin{remark} Unlike Theorems \ref{thm:positive} and \ref{thm:discrete-hardy}, the results in Theorems \ref{thm:high-d-gen-pauli} and \ref{thm:discrete-hardy-high-d} are \emph{not discrete}, as we assume that the sets where the absolute values match have both codimension 1. If one is interested, however, in \emph{discrete} results, it seems that the methods in \cite{Ramos-Sousa} are the best known ones at the present moment. 

We note, however, that the \emph{negative} results in \cite{adve} impose natural restrictions - similar to those in Part II of Theorem \ref{thm:positive} - for the automatic Pauli pair property to hold. In a similar spirit, an alternative approach one could possibly employ in order to prove a \emph{positive} result would be to quantify the methods in \cite{adve}, which seems like an interesting problem on its own, but that escapes the scope of this manuscript, for which reason we shall not investigate it here. 
\end{remark}

\subsection{Proof of Theorem \ref{thm:discrete-hardy-high-d}} We recall what we obtained in the proof of Theorem \ref{thm:high-d-gen-pauli} above: indeed, a more careful analysis shows that we actually have
\begin{align*}
\int_{\R^d} |x|^{2p} |f(x)|^2 \, & dx  \le 2^{d} \left( \frac{1}{\sigma} \left(\frac{K}{K-4}\right) \cdot \frac{K+1}{K} \right)^{2p} \int_{\R^d} |\widehat{f}(\xi)|^2 |\xi|^{2p} \, d\xi\cr
 & + \left(K^{2p} u^{2p} + 2^{d+1} \left( 2\sqrt{\frac{d + 2}{\pi}} \frac{K+1}{\sigma} \right)^{2p} \left( \frac{p}{u} \right)^{2p} \right) \int_{\R^d} |f(x)|^2 \, dx \cr 
 & + C_d (1+\varepsilon)^p \sum_{S \in \Lambda} r(S)^{2p-1} \int_S |f(w)|^2 \, d \mathcal{H}^{n-1}(w),
\end{align*}
as long as we choose $K>4$ such that 
$$p \cdot \left( \frac{K}{K-3}\right)^{2p} < (1+\varepsilon/4)^{p}$$
for all $p$ large enough. We once more start fixing parameters: set $\delta > 0$ and choose $u = \delta \cdot p^{1/2}$. We take $\delta > 0$ sufficiently small such that 
$$(\delta \cdot K)^{2p} \cdot p^p < \min\left(\frac{\varepsilon}{100}, \frac{1}{100\varepsilon}\right)^p \cdot \Gamma(p+1),$$
for all $p>0$ sufficiently large. The existence of such $\delta >0$ is ensured by Stirling's formula. Finally, take $\sigma > 0$ such that 
$$
\left( 2 \sqrt{\frac{d+2}{\pi}} \frac{K+1}{\delta \sigma} \right)^{2p} p^p < \min\left(\frac{\varepsilon}{100}, \frac{1}{100\varepsilon}\right)^p \Gamma(p+1),
$$
again for $p$ large. The existence of such a value of $\sigma$ is ensured, once more, by Stirling's formula, as long as we take $C_A$ to be sufficiently large. We obtain hence 
\begin{align*}
\int_{R^d} |x|^{2p} |f(x)|^2 \, dx & + \int_{\R^d} |y|^{2p} |\widehat{f}(y)|^2 \, dy \le  2 \min\left( \frac{\varepsilon}{100}, \frac{1}{100 \varepsilon}\right)^p \Gamma(p+1) \int_{\R^d} |f|^2 & \cr 
 & + \, (1+\varepsilon/2)^p \cdot \frac{\Gamma\left( p + \frac{d}{2} + 1\right)}{\left(2A\pi\right)^{p + \frac{d}{2}}} & \cr 
& \le 2 \min\left( \frac{\varepsilon}{100}, \frac{1}{100 \varepsilon}\right)^p \Gamma(p+1) \int_{\R^d} |f|^2  +\, (1+\varepsilon)^p \frac{\Gamma(p+1)}{(2A\pi)^p}. &
\end{align*}
The remainder of the proof follows the same steps as in the proof of Theorem \ref{thm:discrete-hardy}, and hence we omit it. This concludes the proof of Theorem \ref{thm:discrete-hardy-high-d}.  

\section{Comments and Generalizations}

\subsection{Asymmetric versions of Theorem \ref{thm:positive}} In spite of the fact that the conditions on the discrete sets $\Lambda, \Gamma$ in Theorem \ref{thm:positive} are made to be symmetric, we highlight that this is \emph{not} needed in order to draw similar conclusions. Indeed, the following result showcases that fact: 

\begin{theorem}\label{thm:positive-asym} Fix any $\alpha > 0,$ and two numbers $p,q \in (1,+\infty)$ such that $\frac{1}{p} + \frac{1}{q} = 1.$ Then there is $C_{p,\alpha} > 0$ such that the following holds. Let $\Lambda, \Gamma \subset \R$ be two discrete sets such that 
\begin{equation}
\max\left\{\limsup_{i\to \pm \infty} |\lambda_{i+1} - \lambda_i| |\lambda_i|^{p-1}, \limsup_{i\to \pm \infty} |\gamma_{i+1} - \gamma_i| |\gamma_i|^{q-1}\right\} < \frac{1}{C_{p,\alpha}}.
\end{equation}
Let $f,g:\R \to \C$ be two functions in $H^1 \cap \mathcal{F}(H^1).$ Suppose, moreover, that the function $g$ satisfies 
\[
|g(x)| \lesssim e^{-\alpha |x|^p}, \, \, |\widehat{g}(\xi)| \lesssim e^{-\alpha |\xi|^q},
\]
and that we have $|f(\lambda)| = |g(\lambda)|, \, \forall \, \lambda \in \Lambda, \, \, |\widehat{f}(\gamma)| = |\widehat{g}(\gamma)|, \, \forall \, \gamma \in \Gamma.$ Then it follows that $|f| \equiv |g|$ and $|\widehat{f}| \equiv |\widehat{g}|.$
\end{theorem}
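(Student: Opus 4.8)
The plan is to run the same three-step strategy as for Theorem~\ref{thm:positive}, replacing the exponent $2$ by the conjugate pair $(p,q)$ throughout and exploiting the identity $q=\tfrac{p}{p-1}$ at the one place where it matters.

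\textbf{Step 1 (asymmetric propagation of decay).} The core of the argument is the analogue of Proposition~\ref{prop:gaussian}: if $f\in H^1\cap\mathcal F(H^1)$ obeys $|f(\lambda)|\le Ce^{-a|\lambda|^p}$ for $\lambda\in\Lambda$ and $|\widehat f(\gamma)|\le Ce^{-a|\gamma|^q}$ for $\gamma\in\Gamma$, with $\Lambda,\Gamma$ satisfying the gap hypothesis of Theorem~\ref{thm:positive-asym}, then $\int_\R e^{\beta|x|^p}|f|^2\,dx<\infty$ and $\int_\R e^{\beta|\xi|^q}|\widehat f|^2\,d\xi<\infty$ for some $\beta>0$. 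As in Section~2 one first passes to subsets of $\Lambda,\Gamma$ whose consecutive products $|\lambda_{i+1}-\lambda_i||\lambda_i|^{p-1}$, $|\gamma_{i+1}-\gamma_i||\gamma_i|^{q-1}$ are bounded \emph{above and below} (the analogue of Lemma~\ref{lemma:sets-lower}); telescoping $|\lambda_{i+1}^p-\lambda_i^p|=p|\int_{\lambda_i}^{\lambda_{i+1}}t^{p-1}\,dt|$ then yields $|\lambda_i|\le (c\,i/C_{p,\alpha})^{1/p}$, $|\gamma_i|\le(c\,i/C_{p,\alpha})^{1/q}$ (asymmetric Lemma~\ref{lemma:decay-lambda}), together with the asymmetric form of the $O(|\lambda_j|^{-r})$ characterisation of $H^1\cap\mathcal F(H^1)$ from \cite{kns}, so that $f\in\mathcal S(\R)$. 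The heart of the step is the Poincar\'e--Wirtinger iteration of Proposition~\ref{prop:convex}: cutting $f$ by the translated bump $F_\xi$ of \eqref{eq:F-special}, the product $f\cdot F_\xi$ lives where $|x|\sim|\xi|$, where the gaps of $\Lambda$ are $\lesssim|\xi|^{1-p}$, so Proposition~\ref{prop:convex} applies with $\mu=\sigma(|\xi|-\tfrac32u)^{p-1}$ and $\Phi(t)=t^N$. Integrating over $\xi$ exactly as in the proof of Proposition~\ref{prop:gaussian}, bounding the discrete defect by Riemann sums via $\sum_{\lambda}|\lambda|^{m}|f(\lambda)|^2\lesssim\int_\R|x|^{m+p-1}e^{-2a|x|^p}\,dx\sim\rho^{m/p}\Gamma(\tfrac m p+1)$, and taking $u\sim m^{1/p}$ and the number of convolution factors comparable to $N$, one arrives at
\begin{align*}
\int_\R|x|^{2N(p-1)}|f|^2 \;\lesssim\; \Big(\tfrac{C}{\sigma}\Big)^{2N}\!\int_\R|\xi|^{2N}|\widehat f|^2 \;+\; C^{m}m!\,\big(\|f\|_2^2+\|\widehat f\|_2^2+1\big) \;+\; C^{m}\!\!\sum_{\lambda\in\Lambda}|\lambda|^{2N(p-1)}|f(\lambda)|^2 ,
\end{align*}
and the mirror estimate with $\Gamma$ and exponent $q-1$ in place of $\Lambda$ and $p-1$. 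Choosing $N$ so that $2N(p-1)=pm$, the conjugacy $q=\tfrac{p}{p-1}$ forces $2N=qm$; hence the $\widehat f$-moment on the right is $\int|\xi|^{qm}|\widehat f|^2$, the discrete sum is $\sum_\lambda|\lambda|^{pm}|f(\lambda)|^2\lesssim\rho^m m!$, and in the mirror estimate the corresponding index satisfies $2N'(q-1)=qm$, i.e. $2N'=pm$. Substituting the two estimates into one another, the combined gain is $(C/\sigma)^{qm}(C/\sigma)^{pm}<\tfrac12$ provided $C_{p,\alpha}$ (hence $\sigma$) is large enough, so $\int|x|^{pm}|f|^2$ and $\int|\xi|^{qm}|\widehat f|^2$ are $\lesssim\rho^m m!$; summing over $m$ and using Stirling gives the claimed weighted $L^2$ bounds.

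\textbf{Steps 2--3 (Paley--Wiener and analytic continuation).} The analogue of Proposition~\ref{prop:paley-wiener} is the elementary bound $|\widehat h(z)|\le\int_\R e^{2\pi|\mathrm{Im}\,z||t|}e^{-a|t|^p}\,dt\lesssim e^{\theta(a,p)|\mathrm{Im}\,z|^{q}}$, obtained by optimising $2\pi s|t|-a|t|^p$ over $|t|$; thus $L^2$-weighted decay $e^{-a|\cdot|^p}$ of a function forces its Fourier transform to be entire of order $q$ and finite type. Feeding in the hypotheses on $g,\widehat g$ and the conclusions of Step~1 on $f,\widehat f$, all of $f,g$ extend to entire functions of order $p$ and $\widehat f,\widehat g$ to entire functions of order $q$, with types controlled by explicit multiples of $\theta(\alpha,p)$ resp.\ $\theta(\alpha,q)$. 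One then sets $F(z)=f(z)\overline{f(\bar z)}$, $G(z)=g(z)\overline{g(\bar z)}$, entire of order $p$ and finite type, with $F-G$ vanishing on $\Lambda$. By the asymmetric Lemma~\ref{lemma:decay-lambda}, $n_{F-G}(r)\ge 2\#\{i:(c\,i/C_{p,\alpha})^{1/p}<r\}\gtrsim C_{p,\alpha}r^{p}$, while the finite-order version of \cite[Theorem~2.5.13]{Boas} bounds $\liminf_r n(r)/r^{p}$ of a nonzero entire function of order $p$ and type $\tau$ by a fixed multiple of $\tau$; taking $C_{p,\alpha}$ large relative to the type from Step~2 forces $F\equiv G$, i.e.\ $|f|\equiv|g|$ on $\R$. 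Running the identical argument with $\mathfrak F(z)=\widehat f(z)\overline{\widehat f(\bar z)}$, $\mathfrak G$, and $\Gamma$ (now of density $\gtrsim C_{p,\alpha}r^{q}$, matched against order $q$) gives $|\widehat f|\equiv|\widehat g|$, completing the proof.

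\textbf{Main obstacle.} Steps~2--3 are a routine transcription of Section~2; the real work is making the asymmetric Wirtinger iteration close. The difficulty is that the cutoff-and-rescale argument exchanges a moment of $f$ of order $2N(p-1)$ for a moment of $\widehat f$ of order $2N$, which are a priori incommensurable, and only the H\"older conjugacy $q=\tfrac{p}{p-1}$ aligns them --- and, at the same time, aligns the Riemann-sum bound $\int|x|^{pm}e^{-2a|x|^p}\,dx\sim\Gamma(m+1)$ for the discrete defect term. One must additionally track constants precisely enough to see that the product of the two mirrored gains falls below $1$, and calibrate the bump width $u\sim m^{1/p}$ and the number of convolution factors (comparable to $N$) so that all error terms stay $\lesssim\rho^m m!$ --- the exact analogues of the choices $u=p^{1/2}$, $k\in(p,\pi p)$ in \cite{kns}, now with the exponent $p$ entering the scaling. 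A secondary, purely technical point is to use the correct finite-order analogue of the Hadamard/Boas zero-counting estimate; for non-integer $p$ this is standard, and for integer $p$ it requires only the usual care about the genus.
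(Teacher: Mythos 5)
Your proposal follows essentially the same route as the paper: an asymmetric propagation-of-decay proposition (the analogue of Proposition \ref{prop:gaussian}, which the paper states as Proposition \ref{prop:extend-decay-general} and leaves as a line-by-line adaptation), the order-$q$ Paley--Wiener bound obtained by optimising $2\pi s|t|-a|t|^p$, and the Hadamard--Boas zero-counting argument applied to $F-G$ and $\mathfrak F-\mathfrak G$ with the density estimate $|\lambda_i|\lesssim C_{p,\alpha}^{-1/p}i^{1/p}$. Your bookkeeping for how the conjugacy $(p-1)(q-1)=1$ closes the Wirtinger iteration is correct and in fact supplies detail the paper omits.
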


\begin{proof}[Sketch of Proof] First, we remark that the following version of Proposition \ref{prop:gaussian} holds: 

\begin{proposition}\label{prop:extend-decay-general} Suppose $\Lambda=\{\lambda_i\}_{i \in \Z}, \Gamma=\{\gamma_i\}_{i \in \Z}$ are such that
\begin{equation}
\max\left( \limsup_{i \to \pm \infty} |\lambda_{i+1} - \lambda_i| |\lambda_i|^{p-1}, \limsup_{j \to \pm \infty} |\gamma_{j+1} - \gamma_j| |\gamma_j|^{q-1}\right) < \frac{1}{2}.
\end{equation}
Suppose that a function $f:\R \to \mathbb{C}$ satisfies that 
\[
|f(\lambda)| \le C e^{-a |\lambda|^p}, \, \forall \lambda \in \Lambda, \,\, |\widehat{f}(\gamma)| \le C e^{-a|\gamma|^q},\, \, \forall \gamma \in \Gamma. 
\]
Then there is $c_a > 0$ Such that 
\[
|f(x)| \le C e^{-c_a |x|^p} \text{ and } |\widehat{f}(\xi)| \le C e^{-c_a |\xi|^q}, \, \, \forall x, \xi \in \R. 
\]  
\end{proposition}

Since the proof of Proposition \ref{prop:extend-decay-general} is a line-by-line adaptation of the proof of Proposition \ref{prop:gaussian}, we shall leave it as an exercise. 

Now, in order to conclude, we argue as in the proof of Theorem \ref{thm:positive}. Indeed, another, more general version of Proposition \ref{prop:paley-wiener} holds: if $f:\R \to \C$ is a measurable function such that $|f(x)| \lesssim e^{-\alpha |x|^p},\, \forall x \in \R$, for some $\alpha >0$ and some $p>1$, then it follows that $\widehat{f}$ may be extended to be an entire function, which satisfies, moreover, that 
\[
|\widehat{f}(\xi)| \le C e^{C(\alpha)|z|^q},\, \forall \, z \in \C,
\]
where $q = \frac{p}{p-1}.$ We use thus Proposition \ref{prop:extend-decay-general} to a function $f$ as in Theorem \ref{thm:positive-asym}, and then, using the general version of Proposition \ref{prop:paley-wiener} above, we conclude that $f$ and $\widehat{f}$ both may be extended as entire functions, such that 
\[
\sup_{z \in \C} \left( |f(z)| e^{-c |z|^p} + |\widehat{f}(z)| e^{-c |z|^q}\right) < + \infty, \text{ for some } c > 0. 
\]
We let, again, $F,G, \mathfrak{F}, \mathfrak{G}$ be defined in the same way as in the proof of Theorem \ref{thm:positive}. In that way, we conclude that $F,G$ are both entire functions of order $p$, and hence so is their difference. Moreover, we have that $F-G$ vanishes on the set $\Lambda.$ But, similarly as in Lemma \ref{lemma:decay-lambda}, we easily conclude that 
\[
|\lambda_i| \lesssim \frac{1}{C_{p,\alpha}^{1/p}}i^{\frac{1}{p}}, \, |\gamma_i| \lesssim \frac{1}{C_{p,\alpha}^{1/q}} i^{\frac{1}{q}}, \, \text{ for all large enough } i\in \N. 
\]
On the other hand, since $\lambda_i \in Z(F-G), \forall \, i \in \N,$ we infer that, again in light of \cite[Theorem~2.5.13]{Boas}, 
\[
\liminf_{r \to \infty} \frac{n_{F-G}(r)}{r^p} < \tilde{C}_p(\alpha),
\]
for some fixed constant $\tilde{C}_p(\alpha)$. But the bounds we have on $\lambda_i$ imply that 
\[
n_{F-G}(r) \ge 2 \cdot \#\left\{i \in \N \colon i \gg 1, i^{1/p} \lesssim C_{p,\alpha}^{1/p} r \right\} \gtrsim C_{p,\alpha} \cdot r^p.
\]
By making $C_{p,\alpha}$ sufficiently large, we reach a contradiction again. By the exact same argument, we have that $\mathfrak{F} \equiv \mathfrak{G}$, and thus we have concluded Theorem \ref{thm:positive-asym}. 
\end{proof}

In spite of the more general nature of Theorem \ref{thm:positive-asym}, we chose to keep the current version of Theorem \ref{thm:positive} for several reasons, the most prominent of them being avoiding unnecessarily polluting the notation throughout the paper, as well as keeping the exposition streamlined. We note that an asymmetric version of Theorem \ref{thm:high-d-gen-pauli} can also be obtained similarly, but in order to keep the exposition as simple as possible we have opted not to include its formulation here. 

\subsection{Sharp versions of Theorem \ref{thm:positive}} As highlighted by Part (II) of Theorem \ref{thm:positive}, we observe that, as long as the density of the sets $\Lambda, \Gamma$ which appear in Theorem \ref{thm:positive} surpass a certain threshold, that result no longer holds. 

In both of such instances, we are measuring density roughly in terms of the best power $\alpha$ such that $\lambda_i \approx i^{\alpha}$ for $i$ sufficiently large, in a suitable sense. In light of that, we pose the following question: 
\begin{question}
For each $\alpha>0$, what is the best constant $C_{\alpha} > 0$ such that, whenever
    \[
    \max\left( \limsup_{i \to \pm \infty} |\lambda_i - \lambda_{i+1}| |\lambda_i| , \limsup_{i \to \pm \infty} |\gamma_i - \gamma_{i+1}| |\gamma_i| \right) < \frac{1}{C_\alpha},
    \]
    then we automatically have that the conclusion of Theorem \ref{thm:positive} holds?     
\end{question}

At this moment, that question seems to be require methods beyond the ones we employed - for instance, if we assume both $f$ and $g$ have space and frequency decay of the kind $e^{-\alpha|x|^2}, $ Theorem \ref{thm:positive} still gives us the bound $C_{\alpha} > \frac{2\pi^2}{ \alpha},$ which is not sharp as $\alpha \to \pi$. Indeed, the proof of Part (II) of Theorem \ref{thm:positive} shows that if 

 \[
 \max\left( \limsup_{i \to \pm \infty} |\lambda_i - \lambda_{i+1}| |\lambda_i| , \limsup_{i \to \pm \infty} |\gamma_i - \gamma_{i+1}| |\gamma_i| \right) > \frac{1}{2},
 \]
 then there are non-zero functions vanishing on $\Lambda$, with $\widehat{f}$ vanishing on $\Gamma$. Since those counterexamples only work for sufficiently small $\alpha > 0$, it would be interesting to investigate the behaviour of the constant $C_{\alpha}$ in that regime. 

 \subsection{Sharpness of Theorem \ref{thm:discrete-hardy}} We remark how sharp Theorem \ref{thm:discrete-hardy} is. Indeed, first of all, note that if we replace \eqref{eq:upper-bound-density-hardy} by the stronger assumption that there is $s>1$ such that 
 \begin{equation}\label{eq:denser-sequences}
 \max\left( \limsup_{j \to \pm \infty}  |\lambda_j - \lambda_{j+1}| |\lambda_j|^s, \limsup_{j \to \pm \infty} |\gamma_j - \gamma_{j+1}||\gamma_j|^s \right) < +\infty,
 \end{equation}
 then an adaptation of its proof shows that the same result holds. That is, the rate of growth of the sequences allowed in Theorem \ref{thm:discrete-hardy} is sharp. 
 
 Moreover, if $A=1,$ we have the classical example of $f(x) = e^{-\pi |x|^2},$ which shows that the claim is false in that case, even under the stronger condition \eqref{eq:denser-sequences}. Furthermore, we note that if the assumptions on $\Lambda, \Gamma$ are less restrictive - as is the case if, for instance, we take $C_A < 2$ in \eqref{eq:upper-bound-density-hardy} - then the conclusion also fails, since the counterexamples from Part (II) of Theorem \ref{thm:positive} apply in this context once more. \\

With that in mind, we highlight the following question:
\begin{question} Regarding Theorem \ref{thm:discrete-hardy}:
\begin{enumerate}
    \item[(I)] If $A=1$, is there a constant $C_1 > 0$ such that, if $\Lambda,\Gamma$ satisfy  
     \begin{equation}\label{eq:denser-sequences-2}
 \max\left( \limsup_{j \to \pm \infty}  |\lambda_j - \lambda_{j+1}| |\lambda_j|, \limsup_{j \to \pm \infty} |\gamma_j - \gamma_{j+1}||\gamma_j| \right) < \frac{1}{C_1},
 \end{equation}
 do we still have that the unique function $f$ such that 
 \[
 |f(\lambda_i)| \lesssim e^{-\pi |\lambda_i|^2}, \, \forall \, i \in \Z, \, |\widehat{f}(\gamma_i)| \lesssim e^{-\pi |\gamma_i|^2}, \, \forall \, i \in \Z
 \]
 is $f(x) = c \cdot e^{-\pi |x|^2}$ for some $c \in \C$? 
    \item[(II)] What is the best constant $C_A>0$ such that if $\Lambda,\Gamma$ satisfy \eqref{eq:upper-bound-density-hardy}, then we can still conclude that $f \equiv 0$ in Theorem \ref{thm:discrete-hardy}? 
\end{enumerate}
\end{question}
We currently do not have any conjecture on the answer of such questions. As a matter of fact, Part (I) would likely need a technique for proving estimates on such functions which is \emph{not} $L^2$-based, as one would imagine that the classical version of Hardy's result would be useful. 

For Part (II) above, we quickly remark on the estimates for $C_A$ that the proof of Theorem \ref{thm:discrete-hardy} yields. A quick scan shows that we need to take $K \sim \frac{1}{A-1}$ in order to run our argument, which implies $\delta \sim (A-1)^{3/2},$ and forces us to choose $C_A \sim (A-1)^{-3}.$ In light of that, it would be already of interest to investigate the optimal value of $\beta > 0$ for which $\sup_{A \text{ close to } 1} (A-1)^{\beta} \cdot \mathfrak{C}_A < +\infty$, where $\mathfrak{C}_A$ is the optimal constant for Part (II).  

\bibliography{cites} 
\bibliographystyle{amsplain}

\end{document}